\definecolor{orcidlogocol}{HTML}{A6CE39}
\DeclareMathOperator{\divg}{div}
\DeclareMathOperator{\tr}{tr}
\DeclareMathOperator{\Rc}{Rc}
\begin{document}

\theoremstyle{definition}
\newtheorem{claim}{Claim}
\theoremstyle{plain}
\newtheorem{proposition}{Proposition}[section]
\newtheorem{theorem}[proposition]{Theorem}
\newtheorem{lemma}[proposition]{Lemma}
\newtheorem{corollary}[proposition]{Corollary}
\theoremstyle{definition}
\newtheorem{defn}[proposition]{Definition}
\theoremstyle{remark}
\newtheorem{remark}[proposition]{Remark}
\theoremstyle{definition}
\newtheorem{example}[proposition]{Example}
\theoremstyle{definition}
\newtheorem*{Motivation}{Motivation}
\newcommand{\RomanNumeralCaps}[1]
    {\MakeUppercase{\romannumeral #1}}
\newcommand{\Addresses}{{% additional braces for segregating \footnotesize
  \footnotesize
 
  \par\nopagebreak
  \textsc{Department of Mathematics, University of California-Irvine, CA, U.S.A }\par\nopagebreak
  \textit{E-mail address}: \texttt{kuanhuil@uci.edu}
  \orcidlink{0000-0001-5109-4797}
}}

\title{\textbf{The linear stability of non-Kähler Calabi-Yau metrics}}
\author{KUAN-HUI LEE}
\date{}
\maketitle
\Addresses
\begin{abstract}
Non-Kähler Calabi-Yau theory is a newly developed subject and it arises naturally in mathematical physics and generalized geometry. The relevant geometries are pluriclosed metrics which are critical points of the generalized Einstein--Hilbert action which is an extension of Perelman’s $\mathcal{F}$-functional. In this work, we study the critical points of the generalized Einstein-Hilbert action and discuss the stability of critical points which are defined as pluriclosed steady solitons. We proved that all compact Bismut--Hermitian--Einstein metrics are linearly stable which is non-Kähler analogue of the stability results of Ricci solitons from Tian, Zhu \cite{tian2018perelmans} and Hall, Murphy \cite{H1}, Koiso \cite{Ko}. In addition, all compact Bismut-flat pluriclosed metrics with $(2n-1)$-positive Ricci curvature are strictly linearly stable when the complex structure is fixed.
\end{abstract}

\section{Introduction}

In complex geometry, the Calabi-Yau manifolds are defined to be compact Kähler manifolds $M$ with vanishing first Chern class. Due to the Calabi-Yau Theorem \cite{Yau} these are precisely the compact manifolds that admit Ricci-flat Kähler metrics. Later, Cao \cite{Cao1985} used the Kähler-Ricci flow to prove the Calabi-Yau theorem which shows global existence and convergence to a Ricci-flat metric with arbitrary initial data. In recent years, these profound results have been extended to a more general settings in complex geometry. Given a Hermitian manifold $(M,g,J)$ with the Kähler form $\omega$, we say that the metric is pluriclosed if 
\begin{align*}
    dd^c\omega=0.
\end{align*}
This is a natural generalization of the Kähler condition $d\omega=0$. In fact, pluriclosed metrics exist on all compact complex surfaces \cite{Gauduchon1984}. The pluriclosed metrics are studied in the theory of non-Kähler manifolds and in the physics of supersymmetry. In the following, we focus on the Bismut connection (\cite{Bismut1989})
\begin{align*}
    \nabla^B=\nabla+\frac{1}{2}g^{-1}H,
\end{align*}
where $H=-d^c\omega$ and $\nabla$ denotes the Levi-Civita connection. To generalize the Ricci form, we then define 
\begin{align*}
    \rho_B=\frac{1}{2}\text{tr  }R^BJ \in \Lambda^2(M),
\end{align*}
which determines a representative of $2\pi c_1(M)$. We say that a pluriclosed metric is \emph{Bismut Hermitian--Einstein } if 
\begin{align*}
    \rho_B=0.
\end{align*}
In fact, as shown in \cite{GRF} Proposition 8.10,
\begin{align*}
    \rho_B^{1,1}(\cdot,J\cdot)&=\Rc-\frac{1}{4}H^2+L_{\theta^\sharp}g,
   \\\rho_B^{2,0+0,2}(\cdot,J\cdot)&=-\frac{1}{2}d^*H+\frac{1}{2}d^B\theta,
\end{align*}
where $\theta$ is the Lee form given by $\theta=-d^*\omega\circ J$, $d^B$ is the exterior derivative with respect to the Bismut connection $\nabla^B$ and $H^2(X,Y)=\langle \iota_XH,\iota_YH \rangle$. One may view the equation $\rho_B=0$ as a generalization of the Calabi-Yau condition to non-Kähler, pluriclosed manifolds.

Next, we consider the Einstein--Hilbert functional (also known as the total scalar curvature functional, see \cite{B} for detail)
\begin{align*}
    S(g)=\int_M R(g)dV_g.
\end{align*}
It is well known that the Einstein metrics are critical points of the Einstein-Hilbert functional restricted to the
space of Riemannian metrics with total volume 1. (\cite{Ko}). Define the generalized Einstein--Hilbert functional $\mathcal{F}$, a generalization of Perelman's functional \cite{Pe}, which is given by 
\begin{align*}
    \mathcal{F}\colon\nonumber&\quad \Gamma(S^2M)\times \Omega^2\times C^\infty (M)\to \mathbb{R}
    \\& \quad (g,b,f)\longmapsto \int_M (R-\frac{1}{12}|H_0+db|^2+|\nabla f|^2)e^{-f}dV_g,  
\end{align*}
and write
\begin{align*}
    \lambda(g,b)\coloneqq\inf\Big\{\mathcal{F}(g,b,f)\big|\kern0.2em f\in C^\infty(M),\,\int_M e^{-f}dV_g=1\Big\},
\end{align*}
where $H_0$ is a fixed closed 3-form (\cite{P}). Critical points of the generalized Einstein--Hilbert functional are steady generalized Ricci solitons (c.f Definition \ref{soliton}) and they satisfy 
\begin{align}
    0=\Rc-\frac{1}{4}H^2+\nabla^2 f, \quad 0=d_g^*H+i_{\nabla f}H \label{RRR}.
\end{align}
These match the equations of motion in compactifications of type \RomanNumeralCaps{2} supergravity \cite{Callan:1985ia}. In the following, we focus on pluriclosed steady solitons which are pluriclosed metrics satisfying the generalized Ricci soliton equations (\ref{RRR}). Furthermore, one can also see that any compact Bismut Hermitian--Einstein metric must be a pluriclosed steady soliton (See \cite{MJJ} Proposition 2.6). However, the converse statement is false (c.f \Cref{E1}). It is interesting to ask whether Bismut Hermitian--Einstein metrics and generalized Einstein metrics are equivalent or not (c.f Definition \ref{soliton}). Until now, we still do not know the answer to the problem, but in the 4-dimensional case, we can prove that Bismut-Hermitian-Einstein metrics are definitely generalized Einstein metrics. In fact, they are not only generalized Einstein metrics, but they are also Bismut-flat (See \cite{GRF} Theorem 8.26).

In this work, we discuss the stability of pluriclosed steady solitons. As shown in \cite{H2}, Hall and Murphy proved that a Kähler-Ricci soliton with $\text{dim } H^{(1,1)}(M)\geq 2$ is linearly unstable. However, in \cite{tian2018perelmans}, Tian and Zhu showed that the second variation of Perelman's entropy on Kähler-Einstein manifold $(M,g)$ is non-positive in the space of Kähler metrics with Kähler classes cohomologous to $2\pi c_1(M)$ (Notice that complex structures on $M$ may vary). We then see that Kähler Ricci solitons may not be linearly stable in general but they are stable under some situations. Later, Hall and Murphy generalized Tian and Zhu's result and conjectured that the second variation of Perelman's entropy on shrinking Kähler-Ricci soliton is non-positive in \cite{H1}.

In our case, let $(M,g,H,J,f)$ be a pluriclosed steady soliton with a closed 3-form $H=-d^c\omega$. Consider a family $(M,g_t,H_t,J_t,f_t)$ with 
\begin{align*}
    H_t=H_0+db_t,\quad (M,g_0,H_0,J_0,b_0)=(M,g,H,J,0)
\end{align*}
and $f_t$ is the minimizer of $\lambda(g_t,b_t)$. Denote 
\begin{align*}
   \frac{\partial }{\partial t}\big|_{t=0} g=h, \quad  \frac{\partial }{\partial t}\big|_{t=0} b=\beta,\quad   \frac{\partial }{\partial t}\big|_{t=0}\omega =\phi, \quad  \frac{\partial }{\partial t}\big|_{t=0} J =I,
\end{align*}
where $I$ is an infinitesimal variation of complex structure. Since $\lambda$ is diffeomorphism invariant, we may take our general variation  $\gamma=h-\beta\in\ker\overline{\divg}_f$ (See (\ref{bar1}) for the definition of $\overline{\divg}$). Our goal is to study the general variation $\gamma$ in the pluriclosed setting. Incorporating information on the deformation of complex structure \cite{B,Ko}, we are able to precisely analyze $\gamma$. More precisely, we derive the following proposition.

\begin{proposition}\label{P1}
Let $(M,g,H,J,f)$ be a pluriclosed steady soliton with a closed 3-form $H=-d^c\omega$.  For any $\gamma\in\ker\overline{\divg}_f$, there exists a 2-form $\xi$ with $(d^B_f)^*\xi=0$ and a symmetric 2-tensor $\eta$ with $\divg_f^B\eta=0$, satisfying (\ref{k2}) such that
\begin{align}
    \gamma(X,Y)=\xi(X,JY)+\eta(X,Y). \label{4}
\end{align}
where $\divg_f^B$ is the divergence operator and $(d^B_f)^*$ is the formal adjoint of $d^B$ with respect to the Bismut connection (\ref{20}). Moreover, the pair $(\xi,\eta)$ is unique.
\end{proposition}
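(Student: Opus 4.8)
The plan is to read \eqref{4} as a gauge-fixed, $J$-adapted Hodge-type splitting of the general $2$-tensor $\gamma$, to isolate the single genuinely free direction in that splitting, and then to let the conditions $(d^B_f)^*\xi=0$, $\divg_f^B\eta=0$ and (\ref{k2}) pin it down. First I would record the pointwise linear algebra of the map $(\xi,\eta)\mapsto \xi(\cdot,J\cdot)+\eta$. Writing $\xi=\xi^{1,1}+\xi^{2,0+0,2}$, one checks that $\xi^{1,1}(\cdot,J\cdot)$ is symmetric and $J$-invariant while $\xi^{2,0+0,2}(\cdot,J\cdot)$ is antisymmetric of type $(2,0)+(0,2)$; since $\eta$ is symmetric, the antisymmetric part of the right-hand side of \eqref{4} is exactly $\xi^{2,0+0,2}(\cdot,J\cdot)$ and its symmetric part is $\xi^{1,1}(\cdot,J\cdot)+\eta$. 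Consequently the construction splits into three layers: the antisymmetric part of $\gamma$ must be of type $(2,0)+(0,2)$ and then determines $\xi^{2,0+0,2}$ through $\xi^{2,0+0,2}(X,Y)=-[\gamma]^{\mathrm{anti}}(X,JY)$; the $J$-anti-invariant symmetric part of $\gamma$ equals the corresponding part of $\eta$; and only the Hermitian (symmetric, $J$-invariant) part of $\gamma$ is shared between $\xi^{1,1}(\cdot,J\cdot)$ and the $J$-invariant part of $\eta$.

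The first point to secure is that the antisymmetric part of $\gamma$ really is of type $(2,0)+(0,2)$, that is, carries no $(1,1)$-component; I expect this to follow from the definition of the admissible variations (with $\beta$ recording the $b$-field direction) together with the pluriclosed identity $H=-d^c\omega$, and it is plausibly encoded in (\ref{k2}). Granting it, everything in the decomposition is determined except the Hermitian part, where there is a redundancy parametrized by real $(1,1)$-forms: for any such $\psi$ the replacement $(\xi,\eta)\mapsto(\xi+\psi,\ \eta-\psi(\cdot,J\cdot))$ leaves $\gamma$ unchanged. Thus existence and uniqueness of $(\xi,\eta)$ reduce to showing that within each of these orbits there is exactly one representative satisfying $(d^B_f)^*\xi=0$, $\divg_f^B\eta=0$ and (\ref{k2}).

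To carry this out I would translate the gauge conditions into an equation for the redundancy. The key computation is the relation, for a $(1,1)$-form $\psi$, between $(d^B_f)^*\psi$ and $\divg_f^B\!\big(\psi(\cdot,J\cdot)\big)$: with respect to the Bismut connection these two $e^{-f}$-weighted first-order operators agree up to a contraction of $\psi$ with the torsion $H$ and a term in $\nabla f$, both controlled by the steady-soliton equations \eqref{RRR}. Since $\gamma\in\ker\overline{\divg}_f$ supplies one combined first-order identity linking $(d^B_f)^*\xi$ and $\divg_f^B\eta$, the normalization (\ref{k2}) should be read as selecting $\xi$ (equivalently $\psi$) within its redundancy class to be generated by a potential, cutting the residual freedom down to that of a function or $1$-form. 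The remaining gauge condition then becomes a single self-adjoint second-order elliptic equation $L_f u=(\text{data from }\gamma)$ for this potential, where $L_f$ is the weighted Bismut Hodge–de Rham type operator; existence of $u$, hence of $(\xi,\eta)$, follows from the Fredholm alternative in $L^2(e^{-f}\,dV_g)$ once the cokernel is identified with the kernel by self-adjointness.

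Uniqueness I would obtain from the homogeneous argument: if $\xi(\cdot,J\cdot)+\eta=0$ with all three conditions satisfied, the antisymmetric part forces $\xi^{2,0+0,2}=0$, so $\eta=-\xi^{1,1}(\cdot,J\cdot)$ is Hermitian; pairing $\divg_f^B\eta=0$ and $(d^B_f)^*\xi=0$ against $\eta$ and $\xi$ in the weighted inner product and integrating by parts yields $\nabla^B\xi^{1,1}=0$, after which (\ref{k2}) together with \eqref{RRR} rules out the parallel remainder, giving $\xi=0$ and $\eta=0$. The main obstacle I anticipate is exactly the solvability and compatibility step: establishing the correct weighted Bochner–Weitzenböck identity for $d^B$ and $\divg_f^B$ and checking that the two gauge conditions can be imposed simultaneously. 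This is where the geometry of the Bismut connection and the soliton equations \eqref{RRR} genuinely enter, since without the constraint $\gamma\in\ker\overline{\divg}_f$ and the precise normalization (\ref{k2}) the elliptic problem for the potential would be over- or under-determined and neither existence nor uniqueness would survive.
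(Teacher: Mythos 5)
Your proposal treats Proposition~\ref{P1} as a Hodge--type decomposition of an \emph{arbitrary} element of $\ker\overline{\divg}_f$, to be produced by identifying a gauge redundancy and then solving a weighted self-adjoint elliptic equation. This starting point is the genuine gap: as your own pointwise linear algebra shows, any tensor of the form $\xi(\cdot,J\cdot)+\eta$ with $\eta$ symmetric has antisymmetric part of type $(2,0)+(0,2)$, and an arbitrary element of $\ker\overline{\divg}_f$ has no reason to satisfy this algebraic constraint --- no elliptic equation can remove a pointwise obstruction. Concretely, on a compact Calabi--Yau manifold viewed as a pluriclosed steady soliton with $H=0$ and $f$ constant, $\gamma=\omega$ lies in $\ker\overline{\divg}_f$ (it is parallel), yet it admits no decomposition (\ref{4}), since its antisymmetric part is of type $(1,1)$. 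The proposition is really about variations $\gamma=h-\beta$ arising from a family $(g_t,J_t,b_t)$ of pluriclosed Hermitian structures, and the paper's proof uses that structure at every step: differentiating the compatibility $g_t=\omega_t(\cdot,J_t\cdot)$ gives $h=\phi\circ J+\eta$ with $\phi=\frac{\partial}{\partial t}\big|_{t=0}\omega_t$ and $\eta=\omega(\cdot,I\cdot)$, $I=\frac{\partial}{\partial t}\big|_{t=0}J_t$; after redistributing type components so that $\phi\in\Lambda^{1,1}$ and $\beta\in\Lambda^{2,0+0,2}$, one sets $\xi=\phi+\beta\circ J$. The conditions on $\eta$ --- symmetric, anti-Hermitian, $\divg_f^B\eta=0$, and (\ref{k2}) --- are not imposed or solved for; they are consequences of $I$ being an $f$-essential infinitesimal variation of complex structure (Koiso's conditions $I_\alpha^\beta=0$ and $\partial_\alpha I_\beta^{\overline{\gamma}}=\partial_\beta I_\alpha^{\overline{\gamma}}$), established in \Cref{leta}, while $(d^B_f)^*\xi=0$ is \emph{derived} from $\gamma\in\ker\overline{\divg}_f$ via \Cref{RRRRR}. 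Your proposal never introduces $I$, so it has no access to any of these facts; in particular, reading (\ref{k2}) as a normalization ``selecting $\xi$ to be generated by a potential'' is incorrect --- (\ref{k2}) is a first-order identity for $\eta$ encoding the integrability condition $N_J'(I)=0$, not a gauge choice one can arrange by the Fredholm alternative.

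The uniqueness step also does not need (and would not be saved by) your Bochner/integration-by-parts argument. The redundancy you identify, $(\xi,\eta)\mapsto(\xi+\psi,\ \eta-\psi(\cdot,J\cdot))$ for real $(1,1)$-forms $\psi$, is excluded pointwise: the constructed $\eta$ is anti-Hermitian (because $\omega\in\Lambda^{1,1}_{\mathbb{R}}$ and $IJ+JI=0$), whereas $\psi(\cdot,J\cdot)$ is Hermitian symmetric, so $\eta-\psi(\cdot,J\cdot)$ remains anti-Hermitian only when $\psi=0$. Uniqueness in the paper is therefore pure linear algebra --- the anti-Hermitian symmetric block is pointwise orthogonal to the Hermitian-symmetric-plus-antisymmetric block spanned by $\xi\circ J$ --- with no soliton equation, no weighted Weitzenb\"ock identity, and no elliptic theory.
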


Based on the proper decomposition of variation, we further compute the second variation at $\gamma$. Recall that in \cite{KK} Theorem 1.1, the second variation is given by the operator $\overline{\mathcal{L}}_f(\gamma)=\frac{1}{2}\overline{\triangle}_f\gamma+\mathring{R}^+(\gamma)$ where $\overline{\triangle}_f$ 
 is defined in (\ref{bar3}) and $R^+$ is the Bismut curvature given in \Cref{P3}. For $\xi$-part, we reduce $\overline{\triangle}$ to the Hodge Laplacian $\triangle^B_{d}$ with respect to the Bismut connection. For $\eta$-part, we apply the properties of infinitesimal complex structure to simplify the operator $\overline{\mathcal{L}}_f$. Our result is given by
\begin{theorem}\label{T2}

Let $(M,g,H,J,f)$ be a compact pluriclosed steady soliton and $\gamma\in\ker\overline{\divg}_f$. Then, 
\begin{align}
     \frac{d^2}{dt^2}\Big|_{t=0}\lambda(\gamma)&= -2\|d^*_f\xi\|_f^2-\frac{1}{6}\|d\xi-C\|_f^2 +\int_M |\eta|^2\big( -\frac{1}{2}S_B+\mathcal{L}_Vf \big)e^{-f}dV_g, \label{k4}
\end{align} 
where $\gamma=\xi\circ J+\eta$ is the decomposition given in (\ref{k3}), $C$ is defined in (\ref{C}), $\|\cdot\|_f$ denotes the norm of $f$-twisted $L^2$ inner product (\ref{6}),  $S_B=\tr_\omega \rho_B$ denotes the Bismut scalar curvature and the vector field $V=\frac{1}{2}(\theta^\sharp-\nabla f)$. In particular, when $(M,g,H,J,f)$ is a Bismut--Hermitian--Einstein manifold, 
\begin{align*}
     \frac{d^2}{dt^2}\Big|_{t=0}\lambda(\gamma)&= -2\|d^*_f\xi\|_f^2-\frac{1}{6}\|d\xi-C\|_f^2\leq 0.
\end{align*}

\end{theorem}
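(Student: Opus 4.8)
The plan is to begin from the second-variation formula of \cite{KK}, which realizes $\frac{d^2}{dt^2}\big|_{t=0}\lambda(\gamma)$ as the $f$-twisted $L^2$ pairing $\langle\gamma,\overline{\mathcal{L}}_f\gamma\rangle_f$ of $\gamma$ against the formally self-adjoint stability operator $\overline{\mathcal{L}}_f(\gamma)=\frac12\overline{\triangle}_f\gamma+\mathring{R}^+(\gamma)$, and then to insert the decomposition $\gamma=\xi\circ J+\eta$ from \Cref{P1}. Since $\overline{\mathcal{L}}_f$ is self-adjoint for $\langle\cdot,\cdot\rangle_f$, the first task is to show that the quadratic form block-diagonalizes, i.e.\ that the cross term $\langle\xi\circ J,\overline{\mathcal{L}}_f\eta\rangle_f$ vanishes. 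I expect this orthogonality to follow from the gauge conditions $(d^B_f)^*\xi=0$ and $\divg_f^B\eta=0$ together with the fact that $\xi\circ J$ and $\eta$ carry different algebraic types---$\xi\circ J$ being assembled from a $2$-form while $\eta$ encodes the infinitesimal complex-structure variation---so that after integration by parts the mixed curvature and Laplacian contributions cancel. The computation then splits into a pure $\xi$-contribution and a pure $\eta$-contribution.

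For the $\xi$-part I would use that $J$ is $\nabla^B$-parallel on a pluriclosed manifold, so that post-composition with $J$ intertwines $\overline{\triangle}_f$ acting on $\xi\circ J$ with the Bismut Hodge Laplacian $\triangle^B_{d}$ acting on the $2$-form $\xi$. A weighted Bochner--Weitzenb\"ock identity rewrites $\langle\xi,\triangle^B_{d}\xi\rangle_f$ in terms of $\|d^*_f\xi\|_f^2$ and $\|d\xi\|_f^2$; combining this with the Bismut curvature term $\mathring{R}^+$ and completing the square in $d\xi$ should produce the correction $C$ and the two manifestly non-positive terms $-2\|d^*_f\xi\|_f^2-\frac16\|d\xi-C\|_f^2$. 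The condition $(d^B_f)^*\xi=0$ is precisely what allows the leftover divergence-type terms to be discarded.

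The $\eta$-part is where I anticipate the main difficulty. Here I would exploit that $\eta$ arises from an infinitesimal complex-structure deformation $I$ anticommuting with $J$; this rigidity, together with the soliton equations \eqref{RRR} and the Lee-form identities for $\rho_B^{1,1}(\cdot,J\cdot)$ and $\rho_B^{2,0+0,2}(\cdot,J\cdot)$ recalled above, should force $\overline{\triangle}_f$ and $\mathring{R}^+$ to act on $\eta$ effectively as multiplication by the scalar $-\frac12 S_B+\mathcal{L}_V f$ with $V=\frac12(\theta^\sharp-\nabla f)$, after the requisite integrations by parts. The obstacle is controlling the many commutator and curvature terms generated when the full operator meets the complex-structure tensor and verifying that they collapse into this single pointwise weight rather than a genuinely second-order expression; this is exactly where the parallelism of $J$ and the pluriclosed hypothesis must be used most carefully.

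Finally, for a compact Bismut--Hermitian--Einstein manifold the Einstein condition $\rho_B=0$ gives $S_B=\tr_\omega\rho_B=0$ at once, while subtracting the soliton equation $\Rc-\frac14 H^2+\nabla^2 f=0$ from $\rho_B^{1,1}(\cdot,J\cdot)=\Rc-\frac14 H^2+L_{\theta^\sharp}g=0$ pins down the relation between $\theta^\sharp$ and $\nabla f$; combined with compactness this constrains the Lee form against $\nabla f$ so that $\mathcal{L}_V f=0$, and the entire $\eta$-integral drops out. Only the two non-positive $\xi$-terms then survive, yielding $\frac{d^2}{dt^2}\big|_{t=0}\lambda(\gamma)\le 0$ and hence the asserted linear stability.
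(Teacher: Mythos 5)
Your outline breaks at its first structural step: the claim that the quadratic form block-diagonalizes, i.e.\ that the cross term $\int_M\langle\overline{\mathcal{L}}_f(\xi\circ J),\eta\rangle e^{-f}dV_g$ vanishes, is false, and the stated formula cannot be reached along that route. In the paper this cross term is computed explicitly (\Cref{L6}): using the Bismut Bianchi identity (\ref{10}) and $\divg^B_f(\eta\circ J)=0$, it equals $\frac{1}{6}\int_M\langle d\xi,C\rangle e^{-f}dV_g$ with $C$ defined in (\ref{C}), which is nonzero in general. This mixed term is precisely what produces the completed square in (\ref{k4}): the pure $\xi$-block gives only $-2\|d^*_f\xi\|_f^2-\frac{1}{6}\|d\xi\|_f^2$ (via the Bismut Hodge Laplacian, \Cref{LD}), the pure $\eta$-block gives $-\frac{1}{6}\|C\|_f^2+\int_M|\eta|^2\big(-\frac{1}{2}S_B+\mathcal{L}_Vf\big)e^{-f}dV_g$, and adding twice the cross term yields $-\frac{1}{6}\big(\|d\xi\|_f^2-2(d\xi,C)_f+\|C\|_f^2\big)=-\frac{1}{6}\|d\xi-C\|_f^2$. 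The appearance of $\|d\xi-C\|_f^2$ rather than $\|d\xi\|_f^2+\|C\|_f^2$ in the statement is itself the signature of a non-vanishing cross term.

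Your proposal is moreover internally inconsistent on this point: you assert decoupling, yet you ask the $\xi$-block alone to produce ``the correction $C$'' by completing a square. Since $C$ is built out of $\eta$ and $H$, no computation involving only $\xi$ can generate it; if the blocks truly decoupled, the total would be $-2\|d^*_f\xi\|_f^2-\frac{1}{6}\|d\xi\|_f^2-\frac{1}{6}\|C\|_f^2+\int_M|\eta|^2(\cdots)e^{-f}dV_g$, which differs from (\ref{k4}) by $\frac{1}{3}(d\xi,C)_f$. Likewise, describing the $\eta$-block as effectively ``multiplication by the scalar $-\frac{1}{2}S_B+\mathcal{L}_Vf$'' misses its $-\frac{1}{6}\|C\|_f^2$ contribution, which in the paper requires the algebraic identities (\ref{k2}) and (\ref{ll}) for $\eta$ together with the soliton identities for $\nabla^B H$ (\ref{ll1}). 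Your final step for the Bismut--Hermitian--Einstein case is fine in outline --- $S_B=0$ is immediate from $\rho_B=0$, and $\mathcal{L}_Vf=0$ follows as in \Cref{LV} because $\theta^\sharp-\nabla f$ is holomorphic Killing --- but the main identity it is applied to is not established by your argument.
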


As an application, we obtain stability result for \emph{generalized Ricci flow} which is defined as a solution of a one parameter pair of Riemannian metrics and 2-forms $(g_t,b_t)$ satisfying 
\begin{align*}
    \frac{\partial}{\partial t}g=-2\Rc+\frac{1}{2}H^2, \quad
   \frac{\partial}{\partial t}b=-d^*H,  
\end{align*}
where $H=H_0+db$ and $H_0$ is a fixed closed 3-form. Notice that the generalized Ricci flow is closely related to the mathematical physics \cite{polchinski_1998} and generalized geometry \cite{MG,hitchin}. In the complex case, the generalized Ricci flow is gauge equivalent to pluriclosed flow \cite{J8}. This geometric flow is first introduced by Streets and Tian \cite{J2,J3}. More precisely, a one parameter family of pluriclosed metrics and 2-forms $(g_t,\beta_t)$ is called a solution of \emph{pluriclosed flow} if it satisfies
\begin{align*}
        \frac{\partial}{\partial t}\omega=-\rho^{1,1}_B,\quad  \frac{\partial}{\partial t}\beta=-\rho^{2,0}_B. 
    \end{align*}
By now there are many global existence and convergence results for pluriclosed flow (\cite{J9,MJJ,J6}). We note that fixed points of pluriclosed flow are Bismut--Hermitian--Einstein manifolds. As a matter of fact, Bismut-flat manifolds are the only known examples of non-Kähler Bismut--Hermitian--Einstein to the present day. As proved in \cite{MJJ} Theorem 1.2, given a pluriclosed metric $\omega_0$ with $[\partial\omega_0]=[\partial\omega_F]$ where $(M,\omega_F,J)$ is a compact Bismut-flat manifold, the solution to pluriclosed flow with initial data $\omega_0$ exists for all time and converges to a Bismut-flat metric. We expect that all Bismut-flat metrics should be linearly stable and \cite{KK} Corollary 1.2 justified this result.  In \cite{K}, the author proved that the dynamically stable and linearly stable are equivalent (c.f \cite{K} Theorem 1.8 in Riemannian manifold case) if all infinitesimal deformation is integrable (c.f Definition \ref{IS}). Thus, we prove that 

\begin{corollary}
Let $(M,g,H,J)$ be a compact Bismut--Hermitian--Einstein metric. If all infinitesimal deformation is integrable, then $(M,g,H,J)$ is dynamically stable. In other words, for any neighborhood of $(g,0)\in \mathcal{M}\times \Omega^2$, where $\mathcal{M}$ denotes the space of Riemannian metrics,  there exists a smaller neighborhood $\mathcal{V}\subset\mathcal{U}$ such that the pluriclosed flow $(g_t,b_t)$ starting in $\mathcal{V}$ stays in $\mathcal{U}$ for all $t\geq 0$ and converges to a critical point up to a diffeomorphism.          
\end{corollary}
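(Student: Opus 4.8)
The plan is to combine the linear-stability computation of \Cref{T2} with the abstract equivalence between linear and dynamic stability from \cite{K}, so that the corollary becomes essentially an application of those two inputs. First I would record what is already in hand at the infinitesimal level: by \Cref{T2}, for a compact Bismut--Hermitian--Einstein metric one has $\frac{d^2}{dt^2}\big|_{t=0}\lambda(\gamma)\le 0$ for every admissible variation $\gamma\in\ker\overline{\divg}_f$, which is exactly the statement that $(M,g,H,J)$ is linearly stable. The gauge-fixing $\gamma=h-\beta\in\ker\overline{\divg}_f$ is precisely the slice on which the Hessian of $\lambda$ is modelled, so the decomposition of \Cref{P1} and the second-variation formula of \Cref{T2} already provide the full infinitesimal picture; no further reduction is needed.

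Next I would set up the dynamical problem in the correct gauge. Since the pluriclosed flow is gauge equivalent to the generalized Ricci flow \cite{J8}, and $\lambda(g,b)$ is monotone nondecreasing along the generalized Ricci flow with critical points exactly the steady generalized Ricci solitons (the fixed points being the Bismut--Hermitian--Einstein metrics), it suffices to prove dynamic stability for the generalized Ricci flow in a neighborhood of $(g,0)\in\mathcal{M}\times\Omega^2$ and then transfer the conclusion back to the pluriclosed flow through the gauge transformation. The extra diffeomorphism introduced in this transfer is harmless, since convergence is only asserted up to diffeomorphism.

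The heart of the argument is then to invoke \cite{K} Theorem 1.8, whose two hypotheses are exactly the facts now available: linear stability from \Cref{T2}, together with integrability of every infinitesimal deformation (the standing assumption of the corollary, \Cref{IS}). The mechanism behind that theorem is a Lojasiewicz--Simon gradient inequality for $\lambda$ near the critical point: monotonicity of $\lambda$ along the flow, combined with the inequality, forces any flow line starting sufficiently close to $(g,0)$ to remain in $\mathcal{U}$ and to converge to a critical point. Integrability is what upgrades nonpositivity of the Hessian into a genuine local-maximum property of $\lambda$, by guaranteeing that the kernel of the second variation is tangent to an honest manifold of solitons rather than to a direction along which $\lambda$ could still increase at higher order; this is exactly what prevents the flow from escaping $\mathcal{U}$ along a degenerate-but-non-integrable direction.

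The main obstacle I anticipate is verifying that the present geometric setting genuinely fits the abstract framework of \cite{K}: one must check that the function space, the gauge slice $\ker\overline{\divg}_f$, and the monotonicity and analyticity properties of $\lambda$ required for the Lojasiewicz--Simon inequality all carry over from the Riemannian formulation to the generalized Ricci flow with its $(g,b)$-pair, and that the notion of integrability in \Cref{IS} matches the one under which \cite{K} Theorem 1.8 is proved. Once this identification is in place the corollary follows formally, since \Cref{T2} supplies the linear stability and the integrability hypothesis supplies the remaining input needed to pass to dynamic stability.
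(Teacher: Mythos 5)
Your proposal is correct and follows essentially the same route as the paper: linear stability of compact Bismut--Hermitian--Einstein metrics comes from the second variation formula of \Cref{T2} (where the $\eta$-term drops out since $S_B=0$ and $\mathcal{L}_Vf=0$ by \Cref{LV}), and dynamical stability then follows by invoking \cite{K} Theorem 1.8, which upgrades linear stability to dynamical stability under the integrability hypothesis of \Cref{IS}, with the gauge equivalence of pluriclosed flow and generalized Ricci flow \cite{J8} handling the transfer between the two flows. The paper treats this as an immediate consequence of those two inputs, exactly as you outline.
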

  
In \cite{KK}, the author also discussed infinitesimal solitonic deformations of generalized Ricci solitons. In the Bismut flat manifolds case, infinitesimal solitonic deformations could be understood as the kernel of second variation. We then prove that    

\begin{proposition}
Let $(M^{2n},g,H,J)$ be a compact Bismut-flat, pluriclosed manifold. Suppose that the Ricci curvature is $(2n-1)$-positive, i.e., there is at most one non-positive eigenvalue then there is no essential infinitesimal generalized solitonic deformation on directions of fixing the complex structure.        
\end{proposition}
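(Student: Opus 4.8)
The plan is to run a Bochner--Weitzenböck vanishing argument on the $2$-form $\xi$ supplied by \Cref{P1}, using Bismut-flatness to collapse the curvature term down to the Riemannian Ricci tensor. First I would record what \Cref{T2} gives here. Since the manifold is Bismut-flat we have $R^B=0$, hence $\rho_B=\tfrac12\tr R^B J=0$, so the metric is Bismut--Hermitian--Einstein and in particular $S_B=\tr_\omega\rho_B=0$; consequently the entire $\eta$-contribution in \eqref{k4} vanishes. Fixing the complex structure means the infinitesimal variation $I=0$, which by the analysis tying $\eta$ to the infinitesimal complex structure forces $\eta=0$, so every admissible variation is of the form $\gamma=\xi\circ J$ with $(d^B_f)^*\xi=0$. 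An infinitesimal generalized solitonic deformation is an element of the kernel of the second variation, so by \Cref{T2} it corresponds to a $2$-form $\xi$ satisfying the three linear conditions $d^*_f\xi=0$, $d\xi=C$, and $(d^B_f)^*\xi=0$. The goal is then to prove that every such $\xi$ is inessential, i.e.\ tangent to the orbit of the symmetry group of the generalized structure.

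The second step is to compress these conditions into a single $f$-twisted Weitzenböck identity for $\xi$ and to rewrite it through the Bismut connection. Using $\nabla^B=\nabla+\tfrac12 g^{-1}H$, I would express $d$ and $d^*_f$ in terms of $d^B$, $(d^B_f)^*$ and the torsion $H$; the three kernel conditions are precisely what is needed so that the first-order torsion contributions cancel against one another. Because $\nabla^B$ is flat, the Bismut rough Laplacian $(\nabla^B)^*\nabla^B$ carries no curvature whatsoever, so the only zeroth-order operator surviving in the resulting identity is the one produced by commuting $\nabla^B$ past $\nabla$, and after accounting for $\divg^B_f$ and the Lee form $\theta$ this residual term is a pointwise quadratic form in $\xi$ built from $\Rc$ and the drift data encoded in $V=\tfrac12(\theta^\sharp-\nabla f)$. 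Integrating the identity against $e^{-f}dV_g$ and dropping the manifestly non-negative $\|\nabla^B\xi\|_f^2$ term leaves $0\ge\int_M Q(\xi)\,e^{-f}dV_g$, where $Q$ is this Ricci-type quadratic form.

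The third step is the positivity count. Diagonalizing $\Rc$ at a point with eigenvalues $\lambda_1\le\cdots\le\lambda_{2n}$ and expanding $\xi=\sum_{i<j}\xi_{ij}\,e^i\wedge e^j$ in the corresponding frame, $Q(\xi)$ becomes a quadratic form whose definiteness is governed by these eigenvalues together with the soliton correction in the distinguished $\theta^\sharp$-direction. The hypothesis that $\Rc$ is $(2n-1)$-positive, i.e.\ at most one eigenvalue is non-positive, is exactly the condition under which $Q$ is positive on the subspace cut out by the kernel constraints: the single tolerated non-positive eigenvalue is compensated by the Bakry--Émery drift term arising from the soliton equation $\Rc-\tfrac14 H^2+\nabla^2 f=0$ along that one direction. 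Positivity of $Q$ then forces $\nabla^B\xi\equiv0$, and the remaining $\nabla^B$-parallel solutions are identified with the bi-invariant forms of the Bismut-flat structure, hence with deformations tangent to the symmetry orbit; therefore no essential infinitesimal generalized solitonic deformation exists in the directions fixing $J$.

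The hard part will be the second step together with the compensation in the third: producing the clean Weitzenböck identity in which every first-order term involving $H$ and $C$ cancels on the kernel $\{d^*_f\xi=0,\ d\xi=C,\ (d^B_f)^*\xi=0\}$, so that the leftover operator is genuinely the Ricci-type form $Q$ with no indefinite remainder. A Weitzenböck formula for $2$-forms ordinarily contains the full curvature operator, whereas only a Ricci condition is at our disposal; the whole argument hinges on Bismut-flatness being what reduces that curvature operator to $\Rc$, and on carefully tracking $C$, $\theta$ and $\nabla^2 f$ to see that the one non-positive Ricci eigenvalue is absorbed by the drift. Verifying that $(2n-1)$-positivity is the sharp hypothesis—rather than full positivity of $\Rc$—sitting exactly at the threshold of definiteness of $Q$, is the decisive check.
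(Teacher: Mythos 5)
Your reduction step is fine and agrees with the paper: Bismut-flat implies Bismut--Hermitian--Einstein, fixing $J$ forces $\eta=0$ (hence $C=0$), and the kernel of \eqref{k4} together with the slice condition leaves a $2$-form $\xi$ with $d\xi=0$, $d^*_f\xi=0$ and $H_{mik}\xi_{ik}=0$. The first genuine gap is that the centerpiece of your argument --- a Weitzenb\"ock identity in which Bismut-flatness ``collapses the curvature operator down to $\Rc$'' on this kernel --- is exactly the part you defer, and it is not a routine cancellation. For a harmonic $2$-form the Weitzenb\"ock remainder contains the full curvature contraction $R_{ikjl}\xi_{kl}\xi_{ij}$, which under Bismut-flatness \eqref{R} becomes (after using $H_{klm}\xi_{kl}=0$) the term $\tfrac14 H_{ilm}H_{kjm}\xi_{kl}\xi_{ij}$; this is of the form $\sum_m\tr\big((B^m)^2\big)$ and has no sign, so it is not controlled by $\Rc=\tfrac14H^2$ alone. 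The paper never needs such an identity: by \cite{KK} Proposition 5.1 (quoted in Definition \ref{IS}), in the Bismut-flat case a kernel element of the slice satisfies $\overline{\nabla}\gamma=0$, and with $\eta=0$ the relations \eqref{long} give $\nabla\xi=0$ outright --- Levi-Civita parallel, no Bochner argument at all --- while \eqref{llong} supplies the separate integral identities $\big(\mathring{R}(\xi),\xi\big)_{L^2}=0$ and $\int_M R_{ij}\xi_{jk}\xi_{ik}\,dV_g=0$.

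The second gap is the endgame. Even granting your identity, strict positivity of $Q$ on the kernel would yield $\xi=0$ directly; instead you stop at $\nabla^B\xi\equiv0$ and assert that $\nabla^B$-parallel solutions are ``tangent to the symmetry orbit,'' hence inessential. That step is unjustified and in tension with the framework: essential deformations are, by Definition \ref{IS}, elements of the slice $\ker\overline{\divg}$, which is orthogonal to the gauge-orbit directions, so a nonzero parallel $\xi$ lying in the slice would be a genuine essential deformation. Indeed, on Bismut-flat manifolds (compact Lie groups with bi-invariant metrics and their quotients) there is an abundance of $\overline{\nabla}$-parallel $2$-tensors, which is precisely why a curvature hypothesis is needed at all. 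What actually closes the paper's proof is elementary pointwise linear algebra: Bismut-flatness forces $\Rc=\tfrac14H^2\geq0$, so the one tolerated non-positive eigenvalue is necessarily zero; diagonalizing $\Rc$, the identity $\int_M R_{ij}\xi_{jk}\xi_{ik}\,dV_g=0$ with nonnegative integrand kills every row of $\xi$ corresponding to a positive eigendirection, and skew-symmetry of $\xi$ then annihilates the single remaining null row, giving $\xi\equiv0$. In particular, your claim that the non-positive eigenvalue is ``compensated by the Bakry--Emery drift'' misidentifies the mechanism: the drift plays no role, and $(2n-1)$-positivity enters only through this count combined with $\Rc\geq0$ and antisymmetry.
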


As examples, we recall that the Hopf surface is a Bismut-flat manifold that is diffeomorphic to $S^1\times S^3$ and the Calabi-Eckmann space is also a Bismut-flat manifold that is diffeomorphic to $S^3\times S^3$ (See \Cref{E1} and \Cref{E2} for details). Besides, compact Bismut-flat
Hermitian manifolds are classified by Q. Wang, B. Yang and F. Zheng in \cite{BF}. We have the following corollary.
\begin{corollary}
The Hopf surface and Calabi-Eckmann space with the Bismut flat pluriclosed metrics described in \Cref{E1} and \Cref{E2} respectively are strictly linearly stable on the directions fixing the complex structure.     
\end{corollary}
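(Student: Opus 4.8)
The plan is to deduce this corollary directly from the preceding Proposition (the $(2n-1)$-positivity result) together with \Cref{T2} specialized to Bismut-flat metrics, by verifying that the Hopf surface and Calabi-Eckmann space satisfy the required curvature hypothesis. First I would recall that strict linear stability on directions fixing the complex structure means that the second variation $\frac{d^2}{dt^2}\big|_{t=0}\lambda(\gamma)$ is strictly negative for every nonzero admissible variation $\gamma$ whose complex-structure component $I$ vanishes, modulo the trivial (solitonic/diffeomorphism) directions. Because the complex structure is fixed, the infinitesimal deformation of $J$ is zero, so by Proposition \ref{P1} the variation reduces to $\gamma = \xi\circ J + \eta$ with the $\eta$-part controlled by the $(2n-1)$-positivity hypothesis; the content of the corollary is then that the right-hand side of \eqref{k4} has trivial kernel.

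The key steps, in order, would be as follows. First I would invoke \Cref{T2} in the Bismut--Hermitian--Einstein case, where the second variation is $-2\|d^*_f\xi\|_f^2 - \frac{1}{6}\|d\xi - C\|_f^2 \le 0$, and observe that for a Bismut-flat manifold the twisting function $f$ and the Lee form simplify so that the $\eta$-term $\int_M |\eta|^2(-\frac12 S_B + \mathcal{L}_V f)\,e^{-f}\,dV_g$ is governed precisely by the Bismut scalar curvature and the vector field $V$. Second, I would apply the preceding Proposition: under the assumption of $(2n-1)$-positive Ricci curvature on a compact Bismut-flat pluriclosed manifold, there are no essential infinitesimal generalized solitonic deformations in the directions fixing $J$, i.e.\ the kernel of the second variation consists only of trivial directions. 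Third, I would verify that the explicit metrics of \Cref{E1} (Hopf surface, $S^1\times S^3$) and \Cref{E2} (Calabi--Eckmann, $S^3\times S^3$) are indeed Bismut-flat and that their Ricci curvature is $(2n-1)$-positive; since these spaces are homogeneous with explicitly computable curvature, this is a finite computation of eigenvalues of $\Rc$ verifying that at most one eigenvalue is non-positive.

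The main obstacle I expect is the curvature bookkeeping in the third step: one must translate the Bismut-flatness of these homogeneous examples into an explicit description of the ordinary Riemannian Ricci tensor and confirm the $(2n-1)$-positivity eigenvalue condition. For the Hopf surface ($n=2$, so $(2n-1)=3$) and Calabi--Eckmann ($n=3$, so $(2n-1)=5$), the Bismut connection being flat forces a rigid relation among $\Rc$, $H^2$, and the Lee form via the decomposition of $\rho_B$ recalled in the introduction, and I would use this to show that the Ricci operator has the required signature. Once the eigenvalue condition is confirmed, the strict negativity of \eqref{k4} on nontrivial $J$-fixing variations follows immediately from the Proposition, giving strict linear stability. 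A secondary subtlety is confirming that ``fixing the complex structure'' correctly kills the $I$-dependent contributions so that only the $\xi$- and $\eta$-parts survive, which I would justify by the uniqueness of the decomposition in Proposition \ref{P1}.
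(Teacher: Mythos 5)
Your proposal is correct and follows essentially the route the paper intends: combine the non-positivity of the second variation from \Cref{T2} (Bismut-flat metrics are Bismut--Hermitian--Einstein) with the preceding proposition on $(2n-1)$-positive Ricci curvature, then check that the Hopf and Calabi--Eckmann metrics satisfy the curvature hypothesis --- which is immediate, since the Hopf metric is locally the cylinder $(S^3\times\mathbb{R}, g_{S^3}\oplus dt^2)$ with Ricci eigenvalues $(2,2,2,0)$, and the Calabi--Eckmann metric is a product of round $3$-spheres with strictly positive Ricci curvature; equivalently one can use the Bismut-flat identity $\Rc=\tfrac{1}{4}H^2$ and note that $\iota_X H=0$ only on the $S^1$-direction (Hopf) or for $X=0$ (Calabi--Eckmann).

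One conceptual correction, though, because two of your framing sentences get the mechanism backwards: the tensor $\eta$ is by construction the complex-structure part of the variation, $\eta(X,Y)=\omega(X,IY)$, so ``fixing the complex structure'' means $I=0$, hence $\eta=0$ and also $C=0$ (as $C$ is built from $\eta\circ J$). What survives on $J$-fixing directions is only $\gamma=\xi\circ J$, and it is the $\xi$-part, not the $\eta$-part, that the $(2n-1)$-positivity hypothesis controls: a kernel element satisfies $\overline{\nabla}\gamma=0$, which by the paper's final proposition gives $\nabla\xi=0$ and $\int_M R_{ij}\xi_{jk}\xi_{ik}\,dV_g=0$; since $\Rc\geq 0$ with at most a one-dimensional kernel, and the image of the skew operator $\xi$ has even rank, this forces $\xi=0$. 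This does not break your argument --- your step invoking the proposition as a black box is the correct logical move --- but the ``secondary subtlety'' you worry about at the end is not a subtlety at all once you recognize that $\eta$ \emph{is} the $I$-dependent contribution.
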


Here is an outline of the rest of the paper. In section 2, we discuss the non-Kähler geometry, including Bismut connection, generalized Einstein--Hilbert functional. Besides, we introduce a special complex coordinate that we will use in later computation. In section 3, we first study the infinitesimal complex structure and studied the decomposition of variation space. Then, we prove the main result by discussing anti-symmetric part and symmetric part respectively. In the last part, we focus on Bismut-flat manifolds with $(2n-1)$ positive Ricci curvature and show that there is no infinitesimal deformation on the direction of fixing the complex structure.  

\vspace{5pt}
\textbf{Acknowledgements:} The author is grateful to his advisor Jeffrey D.~Streets and Mario Garcia--Fernandez for their helpful advices. Their suggestions play an important role in this work. 
\vspace{5pt}

\section{Non-Kähler geometry}

\subsection{Notation}
In this work, the convention of Riemann curvature is given by 
\begin{align*}
    R(X,Y,Z,W)=\langle \nabla_X\nabla_YZ-\nabla_Y\nabla_XZ-\nabla_{[X,Y]}Z,W \rangle, \quad R_{ijkl}=R(e_i,e_j,e_k,e_l).
\end{align*}
We adopt the following notations.

\begin{itemize}
    \item For any 2-tensor $\gamma$,
\begin{align*}
    \mathring{R} (\gamma)_{ij}= R_{iklj}\gamma_{kl}. 
\end{align*}
    \item The $f$-twisted $L^2$ inner product is given by
\begin{align}
    \Big( \gamma_1,\gamma_2\Big)_{f}\coloneqq\int_M \langle \gamma_1,\gamma_2 \rangle_g  e^{-f} dV_g, \label{6}
\end{align}
where $\langle\kern0.3em,\kern0.3em\rangle_g$ denotes the standard inner product induced by a Riemannian metric $g$ and $\gamma_1,\gamma_2$ are tensors of same type. In particular, we mainly focus on the case when $\gamma$ is a 2-tensor.
\item The $f$-twisted Laplacian $\triangle_f$ is given by
\begin{align*}
    \triangle_f=\triangle-\nabla f\cdot \nabla. 
\end{align*}
 \item Let $S^pM$ denote the bundle of symmetric $(0,p)-$tensor. The $f$-twisted divergence operator $\divg_f: C^\infty(S^pM)\rightarrow  C^\infty(S^{p-1}M)$ is given by 
  \begin{align*}
     (\divg_f T)(X_1,...,X_{p-1})=-\sum_{i=1}^n (\nabla_{e_i}T)(e_i,X_1,...,X_{p-1})+T(\nabla f,X_1,...,X_{p-1}).
 \end{align*}
 \item On a complex manifold $(M,J)$, we denote
\begin{align*}
    (\gamma\circ J)(\cdot,\cdot)=\gamma(\cdot,J\cdot),
\end{align*}
for any 2-tensor $\gamma$. In addition, in the following we will use the index $i,j,k,...$ to represent real coordinates and the index $\alpha,\beta,\gamma,...$ to represent complex coordinates. 
\end{itemize}

\subsection{Bismut Connections and curvatures}

In this subsection, let us briefly review the definition of Bismut connections. Most details could be found in \cite{GRF}, \cite{KK}.
\begin{defn}
Let $(M,g,H)$ be a Riemannian manifold and $H\in\Omega^3$. 
\begin{itemize}
    \item The \emph{Bismut connections} $\nabla^{\pm}$ associated to $(g,H)$ are defined as 
\begin{align}
    \langle \nabla_X^{\pm}Y,Z \rangle=\langle \nabla_XY,Z \rangle\pm \frac{1}{2}H(X,Y,Z). 
\end{align}
Here, $\nabla$ is the Levi-Civita connection associated with $g$, i.e., $ \nabla^{\pm}$ are the unique compatible connections with torsion $\pm H$.
\item The \emph{mixed Bismut connection} is a connection $\overline{\nabla}$ on 2-tensors is defined as
\begin{align}
    (\overline{\nabla}_X\gamma)(Y,Z)=\nabla_X\Big(\gamma(Y,Z)\Big)-\gamma(\nabla^-_XY,Z)-\gamma(Y,\nabla^+_XZ). \label{MBC}
\end{align}
\item The \emph{$f$-twisted divergence operator} $\overline{\divg}_f:  \otimes^2T^*M\longrightarrow T^*M\times T^*M$ on 2-tensors with respect to $\overline{\nabla}$ is given by
\begin{align}
       \overline{\divg}_f\gamma&=(u,v), \label{bar1}
\end{align}
where $u_l=(\nabla^+)^m\gamma_{ml}-\nabla_mf \gamma_{ml}$ and $v_l=(\nabla^-)^m\gamma_{lm}-\nabla_mf \gamma_{lm}$. 
\item The \emph{$f$-twisted divergence operator} $\overline{\divg}_f:  T^*M\times T^*M\longrightarrow C^\infty(M)$ on $T^*M\times T^*M$ with respect to $\overline{\nabla}$ is given by
\begin{align}
       \overline{\divg}_f(u,v)=\frac{1}{2}(\divg_f u+\divg_f v). \label{bar2} 
\end{align}
Note that (\ref{bar2}) is the divergence operator acting at $T^*M\times T^*M$ is different from (\ref{bar1}).
\item The \emph{Laplace operator of the mixed Bismut connection} $\overline{\triangle}_f$ is defined by 
\begin{align}
    \overline{\triangle}_f=-\overline{\nabla}^{*_f}\overline{\nabla}\label{bar3} 
\end{align}
where $\overline{\nabla}^{*_f}$ is the formal adjoint of $\overline{\nabla}$ with respect to $f$-twisted $L^2$ inner product (\ref{6}). More precisely, suppose $\gamma$ is a 2-tensor,
\begin{align}
\overline{\triangle}_f\gamma_{ij}=\triangle_f\gamma_{ij}-H_{mjk}\nabla_m\gamma_{ik}+H_{mik}\nabla_m\gamma_{kj}-\frac{1}{4}(H_{jl}^2\gamma_{il}+H_{il}^2\gamma_{lj})-\frac{1}{2}H_{mkj}H_{mli}\gamma_{lk}. \label{bar4}  
\end{align}
\end{itemize}

\end{defn}

\begin{remark} \label{RRRRR}
    We remark that in the later computation, we sometime express the variation $\gamma=h-K$, where $h$ denotes the symmetric part and $-K$ denotes the skew-symmetric part. Then, 
    \begin{align*}
        \overline{\divg}_f \gamma=(u,v),
    \end{align*}
    where $u_l=(\divg_f h)_l-\frac{1}{2}H_{mkl}K_{mk}+(d_f^*K)_l$ and $v_l=(\divg_f h)_l-\frac{1}{2}H_{mkl}K_{mk}-(d_f^*K)_l$. Besides, 
    \begin{align*}
        \overline{\divg}_f\overline{\divg}_f\gamma=\divg_f\divg_f h-\frac{1}{6}\langle dK,H \rangle.
    \end{align*}
\end{remark}

Following the definitions, we are able to compute the curvature tensor of the Bismut connection and the Bianchi identity of Bismut curvature.
\begin{proposition}[\cite{GRF} Proposition 3.18]\label{P3}
Let $(M^n,g,H)$ be a Riemannian manifold with $H\in \Omega^3$ and $dH=0$, then for any vector fields $X,Y,Z,W$ we have
\begin{align*}
    \nonumber Rm^+(X,Y,Z,W)=& \kern0.2em Rm(X,Y,Z,W)+\frac{1}{2}\nabla_XH(Y,Z,W)-\frac{1}{2}\nabla_YH(X,Z,W)
    \nonumber\\&-\frac{1}{4}\langle H(X,W),H(Y,Z)\rangle+\frac{1}{4}\langle H(Y,W),H(X,Z)\rangle, 
    \\\Rc^+&=\Rc-\frac{1}{4}H^2-\frac{1}{2}d^*H, \quad R^+=R-\frac{1}{4}|H|^2, 
\end{align*}
where $H^2(X,Y)=\langle i_XH,i_YH\rangle$. Here $Rm^+$, $\Rc^+$, $R^+$ denote the Riemannian curvature, Ricci curvature, and scalar curvature with respect to the Bismut connection $\nabla^+$. In particular, if $(M,g,H)$ is Bismut-flat, then 
\begin{align}
    Rm(X,Y,Z,W)=\frac{1}{4}\langle H(X,W),H(Y,Z)\rangle-\frac{1}{4}\langle H(Y,W),H(X,Z)\rangle\quad \text{and } \quad \nabla H=0 \label{R}
\end{align}
for any vector fields $X,Y,Z,W$.
\end{proposition}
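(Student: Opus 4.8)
The plan is to treat $\nabla^+$ as a fixed modification of the Levi-Civita connection by a $(1,2)$-tensor, read off its curvature from the general perturbation formula, contract to obtain the Ricci and scalar expressions, and finally exploit the symmetries of the Riemann tensor together with $dH=0$ to handle the Bismut-flat case. First I would write $\nabla^+_X Y = \nabla_X Y + A(X,Y)$ with $A(X,Y)=\tfrac12 g^{-1}H(X,Y,\cdot)$, so that $\langle A(X,Y),Z\rangle = \tfrac12 H(X,Y,Z)$; note that $A(X,\cdot)$ is skew-adjoint, which is precisely metric-compatibility of $\nabla^+$. For any connection $\nabla+A$ with $A$ a $(1,2)$-tensor, a direct expansion gives
\begin{align*}
Rm^+(X,Y)Z = Rm(X,Y)Z + (\nabla_X A)(Y,Z) - (\nabla_Y A)(X,Z) + A(X,A(Y,Z)) - A(Y,A(X,Z)),
\end{align*}
where the would-be term $A(\nabla_XY-\nabla_YX-[X,Y],Z)$ drops out because $\nabla$ is torsion-free. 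Pairing with $W$ and using $\langle (\nabla_X A)(Y,Z),W\rangle = \tfrac12(\nabla_X H)(Y,Z,W)$ produces the two first-order terms, while expanding the quadratic terms over an orthonormal frame and invoking the skew-symmetry of $H$ turns $\langle A(X,A(Y,Z)),W\rangle$ into $-\tfrac14\langle H(X,W),H(Y,Z)\rangle$; this yields the stated master formula. This step is routine bookkeeping and does not even use $dH=0$.

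Next I would contract the first and fourth slots over a frame. Total skew-symmetry of $H$ collapses $\sum_i(\nabla_{e_i}H)(Y,Z,e_i)$ to $-d^*H(Y,Z)$ (after a cyclic reordering), annihilates $\sum_i(\nabla_Y H)(e_i,Z,e_i)$ and the $\langle H(e_i,e_i),\cdot\rangle$ term, and converts $\sum_i\langle H(Y,e_i),H(e_i,Z)\rangle$ into $-H^2(Y,Z)$. This gives $\Rc^+ = \Rc - \tfrac14 H^2 - \tfrac12 d^*H$, and a further trace yields $R^+ = R - \tfrac14|H|^2$, the $d^*H$ contribution dropping out since it is a $2$-form.

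The substantive step, which I expect to be the main obstacle, is the Bismut-flat case. Setting $Rm^+=0$ in the master formula expresses $Rm$ as a first-order term in $\nabla H$ plus the quadratic $H$-term; the goal is to show the first-order part vanishes, i.e. $\nabla H = 0$. I would impose the pair symmetry $Rm(X,Y,Z,W)=Rm(Z,W,X,Y)$: since the quadratic term is itself pair-symmetric, this forces $(\nabla_X H)(Y,Z,W)-(\nabla_Y H)(X,Z,W) = (\nabla_Z H)(X,Y,W)-(\nabla_W H)(X,Y,Z)$ after cyclically reordering form-slots, whereas $dH=0$ says exactly that these two differences \emph{sum} to zero; hence each vanishes and $(\nabla_X H)(Y,Z,W)$ is symmetric in $X,Y$. (Equivalently, one checks $Rm^+(X,Y,Z,W)-Rm^-(Z,W,X,Y)=\tfrac12 dH(X,Y,Z,W)$, so flatness for $+H$ and $dH=0$ force flatness for $-H$, and subtracting the two explicit formulas isolates $\nabla H$.)

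Finally, the symmetry of $\nabla H$ in its first two arguments closes the argument. Writing $S(X;Y,Z,W)=(\nabla_X H)(Y,Z,W)$, which is skew in $(Y,Z,W)$ and now symmetric in $(X,Y)$, I would alternate the two symmetries along the chain $S(X;Y,Z,W)=S(Y;X,Z,W)=-S(Y;Z,X,W)=-S(Z;Y,X,W)=S(Z;X,Y,W)=S(X;Z,Y,W)=-S(X;Y,Z,W)$, forcing $S=-S$ and hence $\nabla H=0$. Feeding $\nabla H=0$ back into the master formula leaves precisely the claimed quadratic expression for $Rm$, completing the proof. The only delicate point is the combinatorial interplay of the pair symmetry, first-slot symmetry, and $dH=0$; everything else is a matter of careful contraction.
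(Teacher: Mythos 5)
Your proof is correct, and a direct comparison is largely moot: the paper does not prove this proposition at all — it is quoted from \cite{GRF}, Proposition 3.18, with the Bianchi identity quoted separately right after — so the only question is whether your blind derivation is sound, and it is. The perturbation formula for $\nabla^+=\nabla+A$ with $A=\tfrac12 g^{-1}H$, the identification $\langle(\nabla_XA)(Y,Z),W\rangle=\tfrac12(\nabla_XH)(Y,Z,W)$ and $\langle A(X,A(Y,Z)),W\rangle=-\tfrac14\langle H(X,W),H(Y,Z)\rangle$, and the trace computations (the cyclic reordering giving $-\tfrac12 d^*H$, the sign flip $\sum_i\langle H(Y,e_i),H(e_i,Z)\rangle=-H^2(Y,Z)$, and the $d^*H$ term dropping from the scalar trace because it is skew) are all carried out with the right conventions, including that $|H|^2$ denotes the full tensor norm with no $1/3!$ factor. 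The substantive part is handled correctly as well: pair symmetry of $Rm$ together with pair symmetry of the quadratic term forces
\begin{align*}
(\nabla_XH)(Y,Z,W)-(\nabla_YH)(X,Z,W)=(\nabla_ZH)(X,Y,W)-(\nabla_WH)(X,Y,Z),
\end{align*}
while $dH=0$ says these two quantities sum to zero, so each vanishes; the closing $S_3$-alternation showing that a tensor symmetric in $(X;Y)$ and skew in $(Y,Z,W)$ must vanish is exactly the right finishing move. Your parenthetical alternative — flatness of $\nabla^+$ plus $dH=0$ implies flatness of $\nabla^-$ via $R^+(X,Y,Z,W)-R^-(Z,W,X,Y)=\tfrac12\,dH(X,Y,Z,W)$, after which subtracting the two explicit curvature formulas isolates the same first-slot symmetry of $\nabla H$ — is the route closer in spirit to the cited source; both are valid, and your main-line argument has the small advantage of never introducing $\nabla^-$.
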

\begin{proposition}[\cite{GRF} Proposition 3.20]
Let $(M^n,g,H)$ be a Riemannian manifold with $H\in \Omega^3$ and $dH=0$, then for any vector fields $X,Y,Z,W$ we have
\begin{align}
    \nonumber\sum_{\sigma(X,Y,Z)}R^+(X,Y,Z,W)&=(\nabla^+_WH)(X,Y,Z)- \sum_{\sigma(X,Y,Z)}g\big(H(X,Y),H(Z,W)\big)
    \\&=\frac{1}{2}\Big( \sum_{\sigma(X,Y,Z)}(\nabla^+_XH)(Y,Z,W)+(\nabla^+_WH)(X,Y,Z) \Big).\label{10}
\end{align}
\end{proposition}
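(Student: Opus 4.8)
The plan is to derive this as a special case of the general first Bianchi identity for a metric connection with torsion, followed by a reduction that uses the closedness $dH=0$. Recall that $\nabla^+$ has totally skew torsion $T$ determined by $\langle T(X,Y),Z\rangle = H(X,Y,Z)$, so that $T(X,Y)=H(X,Y)$ as a vector field. With the curvature and torsion conventions of the Notation subsection, any connection satisfies the algebraic first Bianchi identity, which after pairing with $W$ reads
\[
\sum_{\sigma(X,Y,Z)} R^+(X,Y,Z,W) = \sum_{\sigma(X,Y,Z)}\Big( \langle T(T(X,Y),Z),W\rangle + \langle (\nabla^+_X T)(Y,Z),W\rangle \Big).
\]
Metric compatibility of $\nabla^+$ gives $\langle (\nabla^+_X T)(Y,Z),W\rangle = (\nabla^+_X H)(Y,Z,W)$, and the total antisymmetry of the $3$-form $H$ gives $\langle T(T(X,Y),Z),W\rangle = H(H(X,Y),Z,W) = g(H(X,Y),H(Z,W))$. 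Thus I would first obtain the intermediate form
\[
\sum_{\sigma(X,Y,Z)} R^+(X,Y,Z,W) = \sum_{\sigma(X,Y,Z)}(\nabla^+_X H)(Y,Z,W) + \sum_{\sigma(X,Y,Z)} g(H(X,Y),H(Z,W)).
\]

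Next I would convert the cyclic sum of $\nabla^+H$ into a single $\nabla^+_W$-derivative. The key input is $dH=0$: writing out $dH(X,Y,Z,W)=0$ in terms of the Levi-Civita connection and using the antisymmetry of $H$ yields the clean torsion-free identity $\sum_{\sigma(X,Y,Z)}(\nabla_X H)(Y,Z,W) = (\nabla_W H)(X,Y,Z)$. Inserting the relation $(\nabla^+_A H)(B,C,D) = (\nabla_A H)(B,C,D) - \tfrac{1}{2}\big(H(H(A,B),C,D)+H(B,H(A,C),D)+H(B,C,H(A,D))\big)$, which is immediate from $\nabla^+=\nabla+\tfrac12 g^{-1}H$, into this identity reduces the difference $\sum_{\sigma(X,Y,Z)}(\nabla^+_X H)(Y,Z,W) - (\nabla^+_W H)(X,Y,Z)$ to a purely algebraic expression quadratic in $H$. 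A careful reindexing of the cyclic sums, again using the antisymmetry and cyclic symmetry of $H$, should identify this expression with $-2\sum_{\sigma(X,Y,Z)} g(H(X,Y),H(Z,W))$. Substituting back into the intermediate form then produces the first claimed equality, the extra $-2\sum_\sigma g(H,H)$ turning the $+\sum_\sigma g(H,H)$ into the stated $-\sum_\sigma g(H,H)$.

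The second equality then comes for free by symmetrization. At this point I have two expressions for $\sum_{\sigma(X,Y,Z)} R^+(X,Y,Z,W)$: the intermediate form, carrying $\sum_\sigma(\nabla^+_X H)(Y,Z,W)$ and $+\sum_\sigma g(H,H)$, and the first claimed form, carrying $(\nabla^+_W H)(X,Y,Z)$ and $-\sum_\sigma g(H,H)$. Averaging the two, the quadratic terms cancel identically and one is left with $\tfrac12\big(\sum_{\sigma(X,Y,Z)}(\nabla^+_X H)(Y,Z,W)+(\nabla^+_W H)(X,Y,Z)\big)$, which is exactly the second line.

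I expect the main obstacle to be the quadratic bookkeeping in the second paragraph: reconciling the torsion-correction terms coming from $\nabla^+H-\nabla H$ with the $g(H(X,Y),H(Z,W))$ terms produced by $T(T(X,Y),Z)$ requires tracking signs carefully under the cyclic permutation together with the antisymmetry of $H$. This step is elementary but error-prone; alternatively one could sidestep the conceptual reduction and verify the required quadratic identity by a direct computation in a local orthonormal frame.
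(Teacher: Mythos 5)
Your proposal is mathematically correct, but note that the paper never proves this proposition: it is imported verbatim from \cite{GRF} (Proposition 3.20), so there is no in-paper argument to compare against. The route suggested by the paper's own setup, and by the ordering in \cite{GRF}, is to start from the expansion of $Rm^+$ in \Cref{P3}, take the cyclic sum, kill $\sum_{\sigma}Rm$ with the classical first Bianchi identity, and use $dH=0$ in the form $\sum_{\sigma(X,Y,Z)}(\nabla_XH)(Y,Z,W)=(\nabla_WH)(X,Y,Z)$; this yields $\sum_{\sigma}R^+=(\nabla_WH)(X,Y,Z)-\tfrac{1}{2}\sum_{\sigma}g\big(H(X,Y),H(Z,W)\big)$, which becomes the stated first line once $\nabla_WH$ is converted to $\nabla^+_WH$. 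Your argument instead invokes the first Bianchi identity with torsion for $\nabla^+$ itself and only afterwards descends to the Levi-Civita connection. Both work; yours is more conceptual and isolates exactly where $dH=0$ enters, at the price of the quadratic bookkeeping you flag as the danger point, while the route through \Cref{P3} front-loads that bookkeeping into a formula the paper has already recorded.

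The bookkeeping you left open does close, so there is no gap. Set $a=g\big(H(X,Y),H(Z,W)\big)$, $b=g\big(H(Y,Z),H(X,W)\big)$, $c=g\big(H(Z,X),H(Y,W)\big)$. Cyclic invariance and antisymmetry of the 3-form $H$ give
\begin{align*}
\sum_{\sigma(X,Y,Z)}\Big(H(H(X,Y),Z,W)+H(Y,H(X,Z),W)+H(Y,Z,H(X,W))\Big)&=3(a+b+c),
\\ H(H(W,X),Y,Z)+H(X,H(W,Y),Z)+H(X,Y,H(W,Z))&=-(a+b+c),
\end{align*}
so, after the Levi-Civita terms cancel via $dH=0$, your difference is
\begin{align*}
\sum_{\sigma(X,Y,Z)}(\nabla^+_XH)(Y,Z,W)-(\nabla^+_WH)(X,Y,Z)=-\tfrac{3}{2}(a+b+c)-\tfrac{1}{2}(a+b+c)=-2\sum_{\sigma(X,Y,Z)}g\big(H(X,Y),H(Z,W)\big),
\end{align*}
exactly the identity you predicted. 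Substituting this into your intermediate form gives the first equality, and your averaging argument for the second line is correct as stated.
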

In terms of the Bismut connection $\nabla^+$, we have the following commutation formula.
\begin{lemma}
    Let $\gamma$ be a 2-tensor and $\{e_i\}$ be an orthonormal frame at a fixed point $p$. Then,
    \begin{align}
        (\nabla^+_i\nabla^+_j\gamma)_{kl}-(\nabla^+_j\nabla^+_i\gamma)_{kl}=-H_{ijm}(\nabla^+_m\gamma)_{kl}-R^+_{ijkm}\gamma_{ml}-R^+_{ijlm}\gamma_{km}. \label{cf}
    \end{align}
\end{lemma}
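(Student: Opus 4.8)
The plan is to recognize \eqref{cf} as nothing more than the Ricci (commutation) identity for the second covariant derivative of a $(0,2)$-tensor attached to a connection with torsion, specialized to $\nabla^+$. First I would record the general fact, valid for an arbitrary linear connection $\nabla$: if $\mathcal{R}(X,Y)=\nabla_X\nabla_Y-\nabla_Y\nabla_X-\nabla_{[X,Y]}$ is the curvature operator and $T(X,Y)=\nabla_XY-\nabla_YX-[X,Y]$ the torsion, then for any tensor $S$ the second covariant derivative $\nabla^2_{X,Y}S:=\nabla_X\nabla_YS-\nabla_{\nabla_XY}S$ satisfies
\begin{align*}
\nabla^2_{X,Y}S-\nabla^2_{Y,X}S=\mathcal{R}(X,Y)S-\nabla_{T(X,Y)}S.
\end{align*}
This is purely formal: writing $\nabla_XY-\nabla_YX=[X,Y]+T(X,Y)$ turns the difference $\nabla_X\nabla_YS-\nabla_Y\nabla_XS-\nabla_{\nabla_XY-\nabla_YX}S$ into the curvature operator applied to $S$ minus the derivative of $S$ in the torsion direction.

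Next I would specialize to $\nabla=\nabla^+$ and $S=\gamma$, evaluate on the orthonormal frame $X=e_i$, $Y=e_j$, and read $(\nabla^+_i\nabla^+_j\gamma)_{kl}$ as the $(i,j,k,l)$ component of the full second covariant derivative tensor $\nabla^+\nabla^+\gamma$. Two substitutions then finish the argument. Since $\nabla^+$ has torsion $+H$ and $H$ is totally skew, $T^+(e_i,e_j)=H_{ijm}e_m$ in an orthonormal frame, so $-\nabla_{T^+(e_i,e_j)}\gamma=-H_{ijm}(\nabla^+_m\gamma)_{kl}$, which is the first term on the right of \eqref{cf}. For the curvature contribution, $\mathcal{R}(e_i,e_j)$ acts on $\gamma$ as a derivation of the tensor algebra, hence as minus the curvature on each covariant slot; with the paper's convention $R^+_{ijkm}=\langle R^+(e_i,e_j)e_k,e_m\rangle$ this yields $(\mathcal{R}(e_i,e_j)\gamma)_{kl}=-R^+_{ijkm}\gamma_{ml}-R^+_{ijlm}\gamma_{km}$, the remaining two terms. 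Adding the two contributions reproduces \eqref{cf} exactly. Equivalently, one can run the computation in a frame that is orthonormal and $\nabla^+$-parallel at the single point $p$ (the connection coefficients of any linear connection can be made to vanish at a point); there the bracket satisfies $[e_i,e_j]=-T^+(e_i,e_j)=-H_{ijm}e_m$ at $p$, which is the mechanism producing the torsion term, while the antisymmetrized derivatives of the frame generate $R^+$.

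There is no genuine obstacle here; this is the standard Ricci identity with skew torsion, and the only real care needed is bookkeeping of conventions. The two points to watch are, first, the sign and index convention for $R^+$ fixed earlier in the paper, so that the curvature acts on both covariant slots of $\gamma$ with the correct sign; and second, the interpretation of $(\nabla^+_i\nabla^+_j\gamma)_{kl}$ as a component of the second covariant derivative tensor rather than as an iterated frame derivative. It is precisely this interpretation that licenses replacing $\nabla_{\nabla_XY-\nabla_YX}$ by the bracket-plus-torsion decomposition and guarantees that no stray frame-derivative terms survive, so that the torsion enters with a single factor of $H$ matched to the antisymmetrized derivative indices.
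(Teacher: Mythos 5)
Your proof is correct and is essentially the paper's argument: the paper expands $(\nabla^+_i\nabla^+_j\gamma)_{kl}$ as a component of the second covariant derivative tensor in a frame and antisymmetrizes, which is precisely the torsion-modified Ricci identity you derive invariantly. Both hinge on the same two facts---the decomposition $\nabla^+_XY-\nabla^+_YX=[X,Y]+T^+(X,Y)$ with $T^+(e_i,e_j)=H_{ijm}e_m$, and the curvature operator acting on both covariant slots of $\gamma$ with the paper's sign convention $R^+_{ijkm}=\langle R^+(e_i,e_j)e_k,e_m\rangle$---so the two proofs differ only in packaging.
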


\begin{proof}
Note that 
\begin{align*}
    (\nabla^+_i\nabla^+_j\gamma)_{kl}&=e_ie_j(\gamma_{kl})-(\nabla^+_{\nabla^+_ie_j}\gamma)_{kl}-\gamma(\nabla^+_i\nabla^+_je_k,e_l)-\gamma(e_k,\nabla^+_i\nabla^+_je_l)
    \\&\kern2em -(\nabla_i^+\gamma)(\nabla_j^+e_k,e_l)-(\nabla_i^+\gamma)(e_k,\nabla_j^+e_l)-(\nabla_j^+\gamma)(\nabla_i^+e_k,e_l)-(\nabla_j^+\gamma)(e_k,\nabla_i^+e_l)
    \\&\kern2em-\gamma(\nabla_i^+e_k,\nabla_j^+e_l)-\gamma(\nabla_j^+e_k,\nabla_i^+e_l).
\end{align*}
Thus, the result follows from the definition.
\end{proof}

In this work, we mainly focus on the complex geometry. Let $(M,g,J)$ be a Hermitian manifold and $\omega\in \Lambda^{1,1}_{\mathbb{R}}$ be the associated Kähler form $\omega\in \Lambda^{1,1}_{\mathbb{R}}$ given by
\begin{align*}
    \omega(X,Y)=g(JX,Y), \quad \text{for } X,Y\in TM.
\end{align*}
We say that a Hermitian metric $g$ on $M$ is pluriclosed if 
\begin{align*}
    dd^c\omega=0,
\end{align*}
where $d^c=\sqrt{-1}(\overline{\partial}-\partial)$. The associated Lee form $\theta$ is defined by 
\begin{align}
    \theta=-d^*\omega\circ J. \label{theta}
\end{align}

In terms of the Hermitian manifold, let me define the Bismut connection formally.
\begin{defn}
Let $(M,g,J)$ be a Hermitian manifold.

\begin{itemize}
    \item The \emph{Bismut connection} $\nabla^B$ is defined by    
\begin{align}
    \langle \nabla_X^{B}Y,Z \rangle=\langle \nabla_XY,Z \rangle- \frac{1}{2}d^c\omega(X,Y,Z). \label{20}
\end{align}
In fact, $\nabla^B$ is a Hermitian connection and equals $\nabla^+$ with $H=-d^c\omega$.
\item The \emph{Bismut Ricci curvature} $\rho_{B}$ of $g$ is given by 
\begin{align}
    \rho_{B}(X,Y)\coloneqq \frac{1}{2}\langle R^{B}(X,Y)Je_i,e_i\rangle, \label{21}
\end{align}
where $R^{B}$ denotes the Riemann curvature tensor with respect to the connection $\nabla^{B}$ and $\{e_i\}$ is any orthonormal basis for $T_pM$ at any given point $p$. We say that a pluriclosed metric $g$ is a \emph{Bismut-Hermitian-Einstein metric} if $\rho_B=0$. In fact, $\rho_B^{1,1}=0$ implies $\rho_B=0$ in the compact case (c.f \cite{ye2023bismut} Theorem 1.4).
\item The \emph{Bismut scalar curvature} $S_B$ is given by 
\begin{align*}
    S_B=\tr_{\omega}\rho_B.
\end{align*}
\item The \emph{Laplace operator of the Bismut connection} $\triangle^B_f$ is defined by 
\begin{align}
    \triangle^B_f=-(\nabla^B)^{*_f}\nabla^B, 
\end{align}
where $(\nabla^B)^{*_f}$ is the formal adjoint of $\nabla^B$ with respect to $f$-twisted $L^2$ inner product (\ref{6}). In real coordinates, we get
\begin{align}
    \triangle^B_f\gamma_{ij}=\triangle_f\gamma_{ij}-H_{mjk}\nabla_m\gamma_{ik}-H_{mik}\nabla_m\gamma_{kj}-\frac{1}{4}(H_{jl}^2\gamma_{il}+H_{il}^2\gamma_{lj})+\frac{1}{2}H_{mkj}H_{mli}\gamma_{lk}. \label{LB}
\end{align}
\end{itemize}

\end{defn}

Following the computation in \cite{GRF} Chapter 8, one derives the following lemmas which will be used in later sections.
\begin{lemma}[\cite{GRF}, Lemma 8.6]
    Let $(M,g,J)$ be a Hermitian manifold and $H=-d^c\omega$. Then, for any $X\in TM$,
    \begin{align}
        \theta(X)=-\frac{1}{2}H(JX,e_i,Je_i), \label{22}
    \end{align}
    where $\{e_i\}$ is any orthonormal frame.
\end{lemma}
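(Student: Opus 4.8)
The plan is to reduce both sides of \eqref{22} to contractions of the covariant derivative $\nabla\omega$ of the Kähler form with respect to the Levi-Civita connection, and then to exploit the algebraic symmetries of $\nabla\omega$ coming from $J^2=-\mathrm{Id}$ and $\nabla g=0$. This keeps the whole argument at the level of a single orthonormal frame and avoids any appeal to the Bismut connection itself.

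First I would rewrite the left-hand side. Since $\theta=-d^*\omega\circ J$ and, for a $2$-form, $(d^*\omega)(Y)=-\sum_i(\nabla_{e_i}\omega)(e_i,Y)$ in an orthonormal frame $\{e_i\}$, this gives $\theta(X)=\sum_i(\nabla_{e_i}\omega)(e_i,JX)$. Next I would rewrite the right-hand side: using $H=-d^c\omega$ together with the identity $d^c\omega(\cdot,\cdot,\cdot)=-d\omega(J\cdot,J\cdot,J\cdot)$ and $J^2=-\mathrm{Id}$, one obtains $H(JX,e_i,Je_i)=d\omega(X,Je_i,e_i)$. Expanding $d\omega$ as the cyclic sum of $\nabla\omega$ then turns the claim into a purely pointwise identity among contractions of $\nabla\omega$.

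The key algebraic input is three facts about $A_Z:=\nabla_Z J$: it is $g$-skew-adjoint and anticommutes with $J$ (both from differentiating $g(J\cdot,\cdot)=-g(\cdot,J\cdot)$ and $J^2=-\mathrm{Id}$), and hence $(\nabla_Z\omega)(X,Y)=g(A_Z X,Y)$ is anti-$J$-invariant in its form slots, i.e. $(\nabla_Z\omega)(JX,JY)=-(\nabla_Z\omega)(X,Y)$. I would use these as follows. In the cyclic expansion $d\omega(X,Je_i,e_i)=(\nabla_X\omega)(Je_i,e_i)+(\nabla_{Je_i}\omega)(e_i,X)+(\nabla_{e_i}\omega)(X,Je_i)$, the first term vanishes after summation because $\sum_i(\nabla_X\omega)(Je_i,e_i)=\tr(A_XJ)$ and $\tr(A_XJ)=\tr(-JA_X)=-\tr(JA_X)=-\tr(A_XJ)$. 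The third term equals $-\theta(X)$ by anti-$J$-invariance (which yields $(\nabla_{e_i}\omega)(X,Je_i)=(\nabla_{e_i}\omega)(JX,e_i)$). The second term likewise equals $-\theta(X)$ after switching the summation frame from $\{e_i\}$ to $\{Je_i\}$ and using antisymmetry of $\omega$. Summing the three contributions gives $\sum_i d\omega(X,Je_i,e_i)=-2\theta(X)$, whence $-\tfrac12 H(JX,e_i,Je_i)=\theta(X)$, as claimed.

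The main obstacle is bookkeeping rather than conceptual: one must keep every sign straight through a chain of conventions (the normalization of $d^*$ and of $d^c$, the cyclic-sum formula for $d$, and the frame change $e_i\mapsto Je_i$), since a single slip flips the answer to $-\theta(X)$. I would therefore fix all conventions at the outset and use the independent agreement of the two nonvanishing terms, each equal to $-\theta(X)$, as a built-in consistency check.
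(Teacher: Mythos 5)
Your proof is correct. Note that the paper never proves this lemma itself --- it is imported verbatim as a citation to \cite{GRF}, Lemma 8.6 --- so there is no in-paper argument to compare against; what you have written is a valid, self-contained substitute for that citation, carried out purely at the level of $\nabla\omega$ and the Levi-Civita connection. The ingredients all check out against the paper's conventions: with $d^c=\sqrt{-1}(\overline{\partial}-\partial)$ one indeed has $d^c\omega(\cdot,\cdot,\cdot)=-d\omega(J\cdot,J\cdot,J\cdot)$ (under evaluation on $J$-rotated arguments a $(2,1)$-form scales by $i$ and a $(1,2)$-form by $-i$), hence $H(JX,e_i,Je_i)=d\omega(X,Je_i,e_i)$; the cyclic expansion of $d\omega$ in terms of $\nabla\omega$ is legitimate since $\nabla$ is torsion-free; the algebra of $A_Z=\nabla_ZJ$ (skew-adjoint, anticommuting with $J$, hence $\nabla\omega$ anti-$J$-invariant in its form slots) kills the $\tr(A_XJ)$ term and evaluates each of the remaining two terms to $-\theta(X)$; and the resulting factor of $2$ matches the $\tfrac12$ in the statement because the frame sum there runs over all $2n$ real frame vectors. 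Two small presentational points. First, for the middle term, the frame switch $e_i\mapsto Je_i$ together with antisymmetry of the two form-slots only reduces it to the third term; its evaluation as $-\theta(X)$ still rests on the same anti-$J$-invariance you used there, so it is more accurate to say both nonvanishing terms are handled by that one identity. Second, the frame switch tacitly uses that $\sum_i(\nabla_{Je_i}\omega)(e_i,X)$ is a genuine tensor contraction (pair the first two slots of the $3$-tensor $(Z,U,V)\mapsto(\nabla_{JZ}\omega)(U,V)$ with $g$), hence independent of the orthonormal frame; this deserves one explicit sentence.
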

\begin{lemma}[\cite{GRF}, Proposition 8.10]
    Let $(M,g,J)$ be a Hermitian manifold and $H=-d^c\omega$. Then,
    \begin{align}
        \rho_B^{1,1}(\cdot,J\cdot)=\Rc-\frac{1}{4}H^2+\frac{1}{2}\mathcal{L}_{\theta_{\sharp}}g, \quad \rho_B^{2,0+0,2}(\cdot,J\cdot)=-\frac{1}{2}d^*H+\frac{1}{2}d\theta-\frac{1}{2}\iota_{\theta_{\sharp}}H. \label{23}
    \end{align}
\end{lemma}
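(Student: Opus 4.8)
The plan is to compute the $J$-trace that defines $\rho_B$ directly, convert it into genuine Bismut--Ricci contractions by means of the first Bianchi identity, and then substitute the curvature comparison formula of \Cref{P3}. Writing $\rho_B(X,Y)=\tfrac12\sum_i R^B(X,Y,Je_i,e_i)$ in an orthonormal frame $\{e_i\}$, I would first record the two symmetries coming from $\nabla^B g=0$ and $\nabla^B J=0$: the curvature endomorphism $R^B(X,Y)$ is skew-adjoint and commutes with $J$, so that $R^B(X,Y,Z,W)=-R^B(X,Y,W,Z)$ and $R^B(X,Y,JZ,JW)=R^B(X,Y,Z,W)$. The essential tension is that the defining contraction of $\rho_B$ traces the last two slots (twisted by $J$), whereas \Cref{P3} is phrased through the honest Ricci contraction $\Rc^+$ (tracing first and last slots, recalling $\nabla^B=\nabla^+$ and $R^B=Rm^+$). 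The device that converts between these two traces, at the price of torsion corrections, is precisely the first Bianchi identity with skew torsion.

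Concretely, I would apply the first Bianchi identity $(\ref{10})$ with cyclic triple $(X,Y,Je_i)$ and fourth argument $e_i$, and sum over $i$. One of the three cyclic terms reproduces $2\rho_B(X,Y)$, while the other two, after using the $J$-invariance in the last two slots and relabelling the orthonormal frame $Je_i\mapsto e_i$, become Bismut--Ricci contractions at $(Y,JX)$ and $(X,JY)$. Solving for $\rho_B$ then expresses $\rho_B(\cdot,J\cdot)$ as a combination of $\Rc^+$-terms together with the right-hand side of $(\ref{10})$, namely a trace $\sum_i(\nabla^+_{e_i}H)(X,Y,Je_i)$ and a cyclic sum of quadratic terms $g(H,H)$. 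Substituting $\Rc^+=\Rc-\tfrac14 H^2-\tfrac12 d^*H$, the symmetric part $\Rc-\tfrac14 H^2$ and the skew part $-\tfrac12 d^*H$ already supply the first terms on each side of $(\ref{23})$.

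It then remains to identify the torsion contributions from the Bianchi right-hand side, and the key input here is the Lee-form trace formula $(\ref{22})$: since the $J$-twisted trace of $H$ is $-2\theta$, a trace of the shape $\sum_i(\nabla_{e_i}H)(\cdot,\cdot,Je_i)$ collapses to a covariant derivative of $\theta$. Writing $\nabla\theta=\tfrac12\mathcal{L}_{\theta^\sharp}g+\tfrac12 d\theta$, its symmetric part furnishes the missing $\tfrac12\mathcal{L}_{\theta^\sharp}g$ in the $(1,1)$ component and its skew part furnishes the $\tfrac12 d\theta$ in the $(2,0)+(0,2)$ component; the quadratic $g(H,H)$ sum together with the torsion correction from converting $\nabla^+\to\nabla$ via $\nabla^+=\nabla+\tfrac12 g^{-1}H$ assemble into the $-\tfrac14 H^2$ and $-\tfrac12\iota_{\theta^\sharp}H$ terms. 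Decomposing $\rho_B(\cdot,J\cdot)$ into its symmetric and skew-symmetric parts is, under the standard correspondence, exactly the decomposition of $\rho_B$ into its $(1,1)$ and $(2,0)+(0,2)$ components, matching the two displayed identities in $(\ref{23})$.

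The hard part will be the bookkeeping in the last step: one must pass repeatedly between the Bismut derivative $\nabla^+H$, in which the Bianchi identity is stated, and the Levi--Civita derivative $\nabla H$, in which \Cref{P3} is stated, tracking the torsion correction $\tfrac12 g^{-1}H$ each time, and one must commute $J$ past Levi--Civita derivatives, where $\nabla J\neq 0$ forces further $H$-terms controllable only through $\nabla^B J=0$. The cleanest route is to keep every tensor expressed through the Bismut connection for as long as possible, using $dH=0$ to control the antisymmetrizations, and to convert back to $\nabla$ and $\nabla\theta$ only at the very end; this should make the reorganization of the torsion traces into the geometric quantities $H^2$, $\mathcal{L}_{\theta^\sharp}g$, $d\theta$, $d^*H$ and $\iota_{\theta^\sharp}H$ transparent, up to the sign normalizations fixed by the stated conventions.
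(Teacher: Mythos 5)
The paper gives no proof of this Lemma at all (it is quoted from [GRF, Proposition~8.10]), so your plan has to be judged against what a complete proof requires, and there it has a genuine gap. Your trace setup is correct as far as it goes: tracing (\ref{10}) over the cyclic triple $(X,Y,Je_i)$ with fourth slot $e_i$, and using metricity plus $\nabla^BJ=0$, does turn the two remaining cyclic curvature terms into $-\Rc^+(Y,JX)$ and $+\Rc^+(X,JY)$, and a correct execution (see the next paragraph for the mechanism) yields the identity
\begin{align*}
2\rho_B(X,JY)\;=\;T(X,Y)+T(JY,JX),\qquad T(X,Y)\coloneqq \Rc^+(X,Y)+(\nabla^+_X\theta)(Y),
\end{align*}
whose symmetric and skew parts in $(X,Y)$ are indeed the claimed right-hand sides: $T_s=\Rc-\tfrac14H^2+\tfrac12\mathcal{L}_{\theta^\sharp}g$ and $T_a=-\tfrac12 d^*H+\tfrac12 d\theta-\tfrac12\iota_{\theta^\sharp}H$. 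But note the structure of this identity: $\rho_B$ appears once, while $T$ appears at \emph{two} $J$-twisted slots. Splitting into symmetric/skew parts therefore identifies $\rho_B^{1,1}(\cdot,J\cdot)$ with the $J$-\emph{invariant projection} $\tfrac12\big(T_s(X,Y)+T_s(JX,JY)\big)$, and $\rho_B^{2,0+0,2}(\cdot,J\cdot)$ with the $J$-\emph{anti-invariant projection} of $T_a$ --- not with $T_s$ and $T_a$ themselves. The Lemma asserts strictly more, namely that $T_s$ is itself $J$-invariant and $T_a$ is itself $J$-anti-invariant; equivalently, that nontrivial identities such as $\Rc_{\alpha\beta}-\tfrac14H^2_{\alpha\beta}+\tfrac12(\mathcal{L}_{\theta^\sharp}g)_{\alpha\beta}=0$ hold. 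Evaluating on complex vectors of pure type shows the Bianchi trace constrains only the mixed components of $T_s$ and the pure components of $T_a$, so no manipulation of that single identity can produce these missing vanishing statements. They must be proved separately, genuinely using that $H=-d^c\omega\in\Lambda^{2,1}\oplus\Lambda^{1,2}$ and the resulting relation between $\nabla J$ and $H$ --- for instance by direct computation in the coordinates of \Cref{L1} and \Cref{P6}. This is where the real work of the cited proof lies, and your plan never touches it.

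Two lesser points. First, your stated mechanism for the derivative term is off: the term $\sum_i(\nabla^+_{e_i}H)(X,Y,Je_i)$ produced by the first line of (\ref{10}) has its derivative index contracted against an argument of $H$, so it is a $J$-twisted divergence of $H$; formula (\ref{22}), which traces two arguments of $H$ against each other leaving the derivative free, cannot convert it into a derivative of $\theta$. The $\nabla^+\theta$-terms arise only if you use the second, symmetrized line of (\ref{10}): there $\sum_i(\nabla^+_XH)(Y,Je_i,e_i)=-2(\nabla^+_X\theta)(JY)$ by (\ref{22}), $\nabla^+J=0$ and a $\nabla^+$-parallel frame, while the two divergence-type traces cancel each other after relabelling $e_i\mapsto Je_i$. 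Second, both lines of (\ref{10}) presuppose $dH=0$, so your route as written proves the statement only for pluriclosed metrics, whereas the Lemma is stated for an arbitrary Hermitian metric with $H=-d^c\omega$; in the general case an additional $dH$-term survives in the Bianchi identity and would have to be tracked.
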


\subsection{Generalized Einstein--Hilbert functional}
In this subsection, we discuss the generalized Einstein--Hilbert functional. Most of the contents can be found in \cite{GRF} Chapter 4 and 6.

\begin{defn}\label{D10}
Given a smooth compact manifold $M$ and a background closed 3-form $H_0$, the \emph{generalized Einstein--Hilbert functional} $\mathcal{F}: \Gamma(S^2M)\times \Omega^2 \times C^\infty (M)\to \mathbb{R}$ is given by 
\begin{align*}
    \mathcal{F}(g,b,f)=\int_M (R-\frac{1}{12}|H_0+db|^2+|\nabla f|^2)e^{-f}dV_g.
\end{align*}
Also, we define 
\begin{align*}
  \lambda(g,b)=\inf\Big\{\mathcal{F}(g,b,f)\big|\, f\in C^\infty(M),\,\int_M e^{-f}dV_g=1\Big \}.
\end{align*}

\end{defn}

\begin{remark}
 For any $(g,b)$, the minimizer $f$ is always achieved. Moreover, $\lambda$ satisfies that
\begin{align}
    \lambda(g,b)=R-\frac{1}{12}|H_0+db|^2+2\triangle f-|\nabla f|^2 \label{lam}
\end{align}
and it is the lowest eigenvalue of the Schrödinger operator $-4\triangle+R-\frac{1}{12}|H_0+db|^2.$   
\end{remark}

The goal in this work is to study the variation properties of the
functional $\lambda$ on the space of pluriclosed metrics. Let $(g_t, b_t)$ be a smooth family of pairs of Riemannian metrics and 2-forms on a smooth compact manifold $M$. Suppose
\begin{align*}
   &\frac{\partial}{\partial t}\Big|_{t=0}g_t=h,\quad \frac{\partial}{\partial t}\Big|_{t=0}b_t=K,  \quad  (g_0,b_0)=(g,b).
\end{align*}
In the following, we denote $\gamma=h-K\in\otimes^2 T^*M$ and $\gamma$ is called a general variation. The first variation formula of the generalized Einstein--Hilbert functional (c.f \cite{K} Theorem 3.3) is given by
\begin{align}
    \nonumber\frac{d}{dt}\Big|_{t=0}\lambda(g_t,b_t)&=\int_M \Big[\langle -\Rc+\frac{1}{4}H^2-\nabla^2f,h \rangle-\frac{1}{2}\langle d^*H+i_{\nabla f}H,K\rangle \Big]e^{-f}dV_g
    \\&=\int_M -\langle \gamma, \Rc^{H,f} \rangle e^{-f}dV_g, 
\end{align}
where $\Rc^{H,f}=\Rc-\frac{1}{4}H^2+\nabla^2f-\frac{1}{2}( d^*H+i_{\nabla f}H)$ is the twisted Bakry-Emery curvature. Motivated by the Ricci flow, we define the stationary points to be the steady generalized Ricci solitons. (For more details about motivations, readers can consult with \cite{GRF} and \cite{K}.)
\begin{defn}\label{soliton}
Let $M$ be a smooth manifold with a Riemannian metric $g$, a background closed 3-form $H_0$, a 2-form $b$ and a smooth function $f$ on $M$. Suppose $H=H_0+db$, then
\begin{itemize}
    \item $(M,g,b,f)$ is called a \emph{steady gradient generalized Ricci soliton} if 
\begin{align}
   0=\Rc-\frac{1}{4}H^2+\nabla^2 f, \quad 0=d_g^*H+i_{\nabla f}H.  \label{s}
\end{align}
In other words, $\Rc^B=-(\nabla^B)^2f$.
\item $(M,g,b)$ is called a \emph{generalized Einstein manifold} if
\begin{align}
  0=\Rc-\frac{1}{4}H^2, \quad 0=d_g^*H.
\end{align}

\end{itemize}
In the complex case, we say that a Hermitian manifold $(M,g,H,J,f)$ is called a \emph{pluriclosed steady soliton} if $g$ is a pluriclosed metric, $H=-d^c\omega=H_0+db$ and $(M,g,H,J,f)$ satisfies (\ref{s}).
\end{defn}

In \cite{KK} Theorem 1.1, the author further derived the second variation formula of the generalized Einstein--Hilbert which is given by
\begin{theorem}[\cite{KK} Theorem 1.1]
Let $(g,b,f)$ be a compact steady gradient generalized Ricci soliton on a smooth manifold $M$. Suppose $(g_t,b_t)$ is a one-parameter family of pairs of Riemannian metrics and 2-forms such that 
\begin{align*}
    \frac{\partial}{\partial t}\Big|_{t=0}g_t =h, \quad \frac{\partial}{\partial t}\Big|_{t=0}b_t=K,\quad  (g_0,b_0)=(g,b).
\end{align*}
Denote $\gamma=h-K$. Let $\phi$ be the unique solution of 
\begin{align*}
    \triangle_f \phi =\overline{\divg}_f\overline{\divg}_f\gamma,\quad \int_M \phi e^{-f}dV_g=0,
\end{align*}
where the definition of $\overline{\divg}_f$ is given in (\ref{bar1}) and (\ref{bar2}). The second variation of $\lambda$ at $(g,b)$ is given by 
\begin{align*}
     \nonumber\frac{d^2}{dt^2}\Big|_{t=0}\lambda=\int_M \Big\langle \gamma, \frac{1}{2}\overline{\triangle}_f\gamma+\mathring{R}^+(\gamma)+\frac{1}{2}\overline{\divg}_f^*\overline{\divg}_f\gamma+\frac{1}{2}(\nabla^+)^2\phi \Big\rangle e^{-f}dV_g,
\end{align*} 
where $\overline{\divg}_f^*$ is the formal adjoint of $\overline{\divg}_f$ with respect to (\ref{6}), $\overline{\triangle}_f$ 
 is defined in (\ref{bar3}), $\langle \mathring{R}^+(\gamma),\gamma \rangle=R^+_{iklj}\gamma_{ij}\gamma_{kl}$ and $R^+$ is the Bismut curvature given in \Cref{P3}. 

\end{theorem}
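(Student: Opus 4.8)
The plan is to obtain the formula by differentiating the first variation identity a second time and then linearizing the twisted Bakry--Émery tensor. Along the path $(g_t,b_t)$ the first variation formula holds at every time, $\frac{d}{dt}\lambda(g_t,b_t)=-\int_M\langle\gamma_t,\Rc^{H_t,f_t}\rangle e^{-f_t}dV_{g_t}$, where $\gamma_t=\partial_t g_t-\partial_t b_t$ and $f_t$ is the minimizer realizing $\lambda(g_t,b_t)$. Differentiating once more at $t=0$ produces three groups of terms, according to whether the $t$-derivative lands on $\gamma_t$, on $\Rc^{H_t,f_t}$, or on the weighted volume $e^{-f_t}dV_{g_t}$. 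Because $(g,b,f)$ is a steady soliton we have $\Rc^{H,f}=0$ at $t=0$, so the first and third groups vanish identically and only
\[
\frac{d^2}{dt^2}\Big|_{t=0}\lambda=-\int_M\Big\langle\gamma,\tfrac{d}{dt}\big|_{t=0}\Rc^{H_t,f_t}\Big\rangle e^{-f}dV_g
\]
survives. Thus the whole problem reduces to computing the linearization $L(\gamma):=\frac{d}{dt}\big|_{0}\Rc^{H_t,f_t}$. It is convenient to record first that, matching symmetric and skew parts and using $\nabla^B=\nabla^+$ together with \Cref{P3}, one has $\Rc^{H,f}=\Rc^++(\nabla^+)^2 f$, i.e.\ the twisted Bakry--Émery tensor is the (non-symmetric) Bismut Ricci tensor plus the Bismut Hessian of $f$; here $(\nabla^+)^2 f=\nabla^2 f-\tfrac12 i_{\nabla f}H$.

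Next I would isolate the part of $L(\gamma)$ coming from the variation $\dot f:=\partial_t f_t|_0$ of the minimizer. Since the only $f$-dependence of $\Rc^{H,f}$ sits in the Bismut Hessian, its contribution to $L(\gamma)$ is exactly $(\nabla^+)^2\dot f$, with symmetric part $\nabla^2\dot f$ and skew part $-\tfrac12 i_{\nabla\dot f}H$. To pin down $\dot f$ I would differentiate the \emph{pointwise} minimizer identity (\ref{lam}), namely $\lambda=R-\tfrac1{12}|H|^2+2\triangle f-|\nabla f|^2$, together with the normalization $\int_M e^{-f}dV_g=1$. At the soliton the left-hand side has vanishing $t$-derivative, and after grouping $2\triangle\dot f-2\langle\nabla f,\nabla\dot f\rangle=2\triangle_f\dot f$ one is left with a Poisson equation in which $\triangle_f\dot f$ is a fixed multiple of $\overline{\divg}_f\overline{\divg}_f\gamma$; comparing with the defining equation of $\phi$, and using that the additive constant fixed by the normalization is annihilated by $(\nabla^+)^2$, this identifies $\dot f$ with the corresponding multiple of $\phi$ and hence packages its contribution into the term $\tfrac12\int_M\langle\gamma,(\nabla^+)^2\phi\rangle e^{-f}dV_g$. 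The simplification $\overline{\divg}_f\overline{\divg}_f\gamma=\divg_f\divg_f h-\tfrac16\langle dK,H\rangle$ from \Cref{RRRRR} is what makes the right-hand side of this Poisson equation match $\phi$.

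The remaining, and main, task is to linearize $\Rc^+=\Rc-\tfrac14 H^2-\tfrac12 d^*H$ with $f$ frozen. I would treat the pieces separately: for $\Rc$ use the standard Lichnerowicz linearization; for $H^2$ use $\dot H=dK$ together with the variation of the inverse metric that raises the indices; and for $d^*H$ linearize the codifferential, again with $\dot H=dK$ and the Christoffel variation $\delta\Gamma^k_{ij}=\tfrac12 g^{kl}(\nabla_i h_{jl}+\nabla_j h_{il}-\nabla_l h_{ij})$. The crux is then to reorganize the resulting second-, first- and zeroth-order terms into Bismut-connection form. Using the commutation formula (\ref{cf}) and the Bismut Bianchi identity (\ref{10}) to convert Levi-Civita derivatives of $\gamma$ and of $H$ into $\nabla^\pm$-derivatives, the pure Laplacian terms should assemble precisely into $\tfrac12\overline{\triangle}_f\gamma$ as written in (\ref{bar4}); here the asymmetry of the mixed Bismut connection (\ref{MBC}), which differentiates the two slots of $\gamma$ by $\nabla^-$ and $\nabla^+$ respectively, is exactly what absorbs the torsion terms $\pm H_{mjk}\nabla_m\gamma_{ik}$. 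The algebraic curvature terms assemble into $\mathring{R}^+(\gamma)$ through \Cref{P3}, and the divergence-type terms, after one integration by parts in the $f$-twisted inner product (\ref{6}), assemble into $\tfrac12\overline{\divg}_f^*\overline{\divg}_f\gamma$. Collecting all contributions and substituting the value of $\dot f$ from the previous step yields the stated identity.

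The main obstacle I anticipate is this last reorganization: the individual linearizations of $\Rc$, $H^2$ and $d^*H$ produce many torsion terms quadratic in $H$ and cross terms between $h$ and $K$, and verifying that they collapse cleanly into the mixed Bismut Laplacian $\overline{\triangle}_f$ and the Bismut curvature operator $\mathring{R}^+$ — rather than into $\triangle^B_f$ of (\ref{LB}) — demands careful bookkeeping of the signs attached to the torsion, since the symmetric part $h$ and the skew part $-K$ of $\gamma$ couple to $H$ with opposite signs; this is precisely why the \emph{mixed} connection appears. A secondary subtlety is the correct determination of $\dot f$, where one must use the pointwise form of the Euler--Lagrange equation (\ref{lam}), not merely its integrated version, in order to obtain a genuine elliptic equation for $\dot f$.
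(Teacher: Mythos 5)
The paper itself contains no proof of this statement---it is imported verbatim from [KK, Theorem~1.1]---so your proposal can only be judged against the standard argument it outlines. Your global strategy is the expected one: differentiate the first variation along the path, use that $\Rc^{H,f}=0$ at a steady soliton to kill every term except the one where the derivative lands on $\Rc^{H_t,f_t}$, and split the linearization into the $\dot f$-contribution and the frozen-$f$ part; your identity $\Rc^{H,f}=\Rc^{+}+(\nabla^{+})^{2}f$ is also correct.

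However, there is a genuine error in your determination of $\dot f$, and it propagates. Differentiating the pointwise identity (\ref{lam}) with $\delta R[h]=\nabla^{i}\nabla^{j}h_{ij}-\triangle\tr_{g}h-\langle h,\Rc\rangle$, the variations of $\triangle f$ and $|\nabla f|^{2}$, the soliton equation, and Remark \ref{RRRRR}, one finds
\begin{align*}
2\triangle_{f}\dot f=\triangle_{f}\tr_{g}h-\overline{\divg}_{f}\overline{\divg}_{f}\gamma ,
\end{align*}
and \emph{not} that $\triangle_{f}\dot f$ is a fixed multiple of $\overline{\divg}_{f}\overline{\divg}_{f}\gamma$: you have dropped the trace terms, chiefly $-\triangle\tr_{g}h$ inside $\delta R[h]$. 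Consequently $\dot f$ is not proportional to $\phi$; the correct relation is $\phi=\tr_{g}h-2\dot f$ up to the additive constant fixed by the normalization. This is not cosmetic. Under your identification, the second-order trace terms of the frozen-$f$ linearization---the $-\tfrac{1}{2}\nabla^{2}\tr_{g}h$ in $\delta\Rc[h]$ together with its torsion companion from $\delta(d^{*}H)$, which assemble to $-\tfrac{1}{2}(\nabla^{+})^{2}\tr_{g}h$---have nowhere to go: none of $\overline{\triangle}_{f}\gamma$, $\mathring{R}^{+}(\gamma)$, $\overline{\divg}_{f}^{*}\overline{\divg}_{f}\gamma$ contains second derivatives of $\tr\gamma$, so the claimed assembly of the frozen-$f$ part cannot close. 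The actual mechanism is that these trace terms combine with $(\nabla^{+})^{2}\dot f$ inside $\delta\Rc^{H,f}$ to give exactly $-\tfrac{1}{2}(\nabla^{+})^{2}\phi$ with $\phi=\tr_{g}h-2\dot f$, which after the overall minus sign becomes the $+\tfrac{1}{2}(\nabla^{+})^{2}\phi$ term of the theorem. The same subtlety already occurs in the classical Ricci-flat case of Cao--Hamilton--Ilmanen, where the auxiliary function solving $\triangle v_{h}=\divg\divg h$ is $\tr_{g}h-2\dot f$, not a multiple of $\dot f$. To repair the argument, carry the trace terms through both halves of the computation and verify that they cancel precisely into $(\nabla^{+})^{2}\phi$.
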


Due to the generalized slice theorem (\cite{KK}, Theorem 2.7), we decompose our variation space to be
\begin{align*}
    \otimes^2T^*M=\ker\overline{\divg}_f\oplus \text{Im}\kern0.25em\overline{\divg}_f^*. 
\end{align*}
Since $\lambda$ is diffeomorphism invariant, it suffices to consider $\gamma\in \ker\overline{\divg}_f$ to test the linear stability and we conclude that $(g,b,f)$ is linearly stable if and only if the operator $\overline{\mathcal{L}}_f\leq 0$ on $\ker\overline{\divg}_f$ where
\begin{align}
    \overline{\mathcal{L}}_f(\gamma)=\frac{1}{2}\overline{\triangle}_f\gamma+\mathring{R}^+(\gamma). 
\end{align}

At the end of this section, let's mention some examples of pluriclosed steady solitons. 

\begin{example}\label{E1}
Given complex numbers $\alpha,\beta$ satisfying $0<|\alpha|\leq |\beta|<1$, we obtain a Hopf surface 
\begin{align*}
    \mathbb{C}^2\setminus\{0\}/\langle(z_1,z_2)\rightarrow (\alpha z_1,\beta z_2) \rangle.
\end{align*}
For any $\alpha,\beta$ this is a complex manifold diffeomorphic to $S^3\times S^1$. In particular, we say that the Hopf surface is diagonal if $|\alpha|=|\beta|$. For diagonal Hopf surfaces, consider the Hopf/Boothby metric defined by the invariant Kähler form on $ \mathbb{C}^2\setminus\{0\}$ by 
\begin{align*}
    \omega_{\text{Hopf}}=\frac{\sqrt{-1}}{|z|^2}(dz_1\wedge d\overline{z}_1+dz_2\wedge d\overline{z}_2).
\end{align*}
In \cite{GRF} Proposition 8.25, one deduce that this metric is a pluriclosed metric, In fact, we observe that the metric on the universal cover  $ \mathbb{C}^2\setminus\{0\}$ is isometric to the standard cylinder $(S^3\times\mathbb{R}, g_{S^3}\oplus dt^2)$. Therefore, this metric is Bismut-flat and $\rho_B=0$. Besides, the Hopf/Boothby metric is the only compact non-Kähler example of a Bismut Hermitian-Einstein metric in complex dimension 2. Moreover, in \cite{J5} Theorem 1.1, the author also construct a non-trivial  pluriclosed steady
soliton on Hopf surface.

\end{example}

\begin{example}\label{E2}
Consider $\mathbb{C}^2\setminus\{0\}\times \mathbb{C}^2\setminus\{0\}$ endowed with the $\mathbb{C}-$action given by 
\begin{align*}
    \gamma(z,\omega)=(e^{\gamma}z,e^{\sqrt{-1}\gamma}z).
\end{align*}
This action is free and proper and the quotient space $(M,J)$ is diffeomorphic to $S^3\times S^3$. This is an example of a Calabi-Eckmann space. Moreover, $M$ is the total space of a holomorphic $T^2$ fibration over $\mathbb{CP}^1\times \mathbb{CP}^1$ and it is the product of the standard Hopf fibration $\pi_i:S^3\to \mathbb{CP}^1$. Let $\xi_i$ denotes the canonical vector fields associated to this fibration on the two factors, with $\mu_i$ the associated canonical connections satisfying $d\mu_i=\pi_i^*\omega_{FS}$. We consider the Kähler form associated to the product of round metrics, 
\begin{align*}
    \omega=\pi_1^*\omega_{FS}+\pi_2^*\omega_{FS}+\mu_1\wedge\mu_2.
\end{align*}
It follows that 
\begin{align*}
    d\omega=\pi_1^*\omega_{FS}\wedge\mu_2-\mu_1\wedge\pi_2^*\omega_{FS}.
\end{align*}
Note that $J\xi_1=\xi_2$, it shows that
\begin{align*}
    dH=-d^c\omega=\pi_1^*\omega_{FS}\wedge\mu_1-\pi_2^*\omega_{FS}\wedge\mu_2.
\end{align*}
We may view its geometric structure as a product of two copies of the $S^3$ factor of the Hopf metric so $dH=0$. It can be checked that $(M,J,\omega)$ is a Bismut-flat metric and the Lee form $\theta^{\sharp}$ is a Killing field which preserves $H$. More details can be found in \cite{GRF} Example 8.35.
\end{example}

\subsection{Pluriclosed steady solitons} 

Let $(M,g,J)$ be a Hermitian manifold. Although, we can't pick complex normal coordinates on a non-Kähler manifold, we are still able to find special coordinates to simplify our work. Here, we use the index $\alpha,\beta,\gamma,...$ to denote the index of complex coordinates. 

\begin{lemma}[\cite{J2}, Lemma 2.9]\label{L1}
    Let $(M,g,J)$ be a Hermitian manifold. Given a point $p\in M$, there exist coordinates around $p$ so that 
    \begin{align*}
        g_{\alpha\overline{\beta}}=\delta_{\alpha\beta} 
    \end{align*}
    and
    \begin{align*}
        \partial_{\alpha}g_{\beta\overline{\gamma}}+\partial_{\beta}g_{\alpha\overline{\gamma}}=0. 
    \end{align*}
\end{lemma}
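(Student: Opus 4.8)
The plan is to build the coordinates in two stages: a linear normalization at $p$ followed by a quadratic holomorphic correction, and to observe that the only freedom available in the quadratic term is symmetric in its two holomorphic indices, which is precisely why the conclusion is the \emph{symmetrized} vanishing rather than full vanishing.

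First I would start from any holomorphic chart $(w^1,\dots,w^n)$ centered at $p$. Since $(g_{\alpha\overline{\beta}}(p))$ is a positive-definite Hermitian matrix, a $\mathbb{C}$-linear change of coordinates diagonalizes it, so after this change we may assume $g_{\alpha\overline{\beta}}(p)=\delta_{\alpha\beta}$, which is the first identity. It then remains to arrange the second identity without disturbing the value at $p$.

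Next I would introduce new coordinates $z$ via a quadratic holomorphic map $w^\mu = z^\mu - \tfrac12 c^\mu_{\alpha\beta} z^\alpha z^\beta$ with constants $c^\mu_{\alpha\beta}=c^\mu_{\beta\alpha}$ to be chosen. Since $\partial w^\mu/\partial z^\alpha = \delta^\mu_\alpha$ at $z=0$, the metric value at $p$ is preserved. Applying the transformation law $\tilde g_{\alpha\overline{\beta}} = g_{\mu\overline{\nu}}\,(\partial w^\mu/\partial z^\alpha)\,\overline{(\partial w^\nu/\partial z^\beta)}$, differentiating at $p$, and using that the holomorphic derivative $\partial_\alpha$ annihilates the antiholomorphic factor $\overline{(\partial w^\nu/\partial z^\beta)}$, I obtain $\partial_\alpha \tilde g_{\beta\overline{\gamma}}|_p = \partial_\alpha g_{\beta\overline{\gamma}}|_p - c^\gamma_{\alpha\beta}$.

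Finally, symmetrizing in $\alpha,\beta$ gives $\partial_\alpha \tilde g_{\beta\overline{\gamma}} + \partial_\beta \tilde g_{\alpha\overline{\gamma}} = \partial_\alpha g_{\beta\overline{\gamma}} + \partial_\beta g_{\alpha\overline{\gamma}} - 2c^\gamma_{\alpha\beta}$, so choosing $c^\gamma_{\alpha\beta} = \tfrac12(\partial_\alpha g_{\beta\overline{\gamma}} + \partial_\beta g_{\alpha\overline{\gamma}})|_p$, which is automatically symmetric in $\alpha,\beta$ and hence an admissible coefficient, yields the second identity. The only point worth flagging is that this symmetry of $c$ is forced, so one can eliminate only the symmetric part of $\partial_\alpha g_{\beta\overline{\gamma}}$; in the Kähler case $d\omega=0$ already makes $\partial_\alpha g_{\beta\overline{\gamma}}$ symmetric in $\alpha,\beta$, recovering the usual Kähler normal coordinates $\partial_\alpha g_{\beta\overline{\gamma}}=0$, whereas in the general Hermitian setting the non-symmetric part is an unremovable obstruction. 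Beyond carefully bookkeeping the transformation law, there is no substantive difficulty.
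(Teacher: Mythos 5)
Your proof is correct: the quadratic holomorphic coordinate change with symmetric coefficients $c^\gamma_{\alpha\beta}=\tfrac12(\partial_\alpha g_{\beta\overline{\gamma}}+\partial_\beta g_{\alpha\overline{\gamma}})|_p$ kills exactly the symmetrized first derivative while preserving $g_{\alpha\overline{\beta}}(p)=\delta_{\alpha\beta}$, and your remark that only the symmetric part can be removed correctly explains why the conclusion is the symmetrized identity rather than full vanishing. The paper itself gives no proof (it cites \cite{J2}, Lemma 2.9), and your argument is essentially the standard one given in that reference.
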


Applying this lemma, we can explicitly write down the Christoffel symbols of the Bismut connection $\nabla^B$.
\begin{proposition} \label{P6}
       Let $(M,g,J)$ be a Hermitian manifold and $H=-d^c\omega \in \Lambda^{2,1}\oplus \Lambda^{1,2}$. Given a point $p\in M$, there exist coordinates around $p$ so that
    \begin{align}
       \nonumber &\nabla^B_{\alpha}\partial_{\beta}=\frac{1}{2}H_{\alpha\beta}^{\kern0.75em\tau}\partial_{\tau},\quad  \nabla_{\alpha}\partial_{\beta}=0,
\nonumber\\&\nabla^B_{\alpha}\partial_{\overline{\beta}}=H_{\alpha\overline{\beta}}^{\kern0.75em\overline{\tau}}\partial_{\overline{\tau}},\quad \nabla_{\alpha}\partial_{\overline{\beta}}=-\frac{1}{2}  H_{\alpha\overline{\beta}}^{\kern0.75em \tau}\partial_\tau+\frac{1}{2}  H_{\alpha\overline{\beta}}^{\kern0.75em \overline{\tau}}\partial_{\overline{\tau}},
\nonumber\\&\nabla^B_{\overline{\alpha}}\partial_{\beta}=H_{\overline{\alpha}\beta}^{\kern0.75em\tau}\partial_{\tau}, \quad \nabla_{\overline{\alpha}}\partial_{\beta}=\frac{1}{2}  H_{\overline{\alpha}\beta}^{\kern0.75em \tau}\partial_\tau-\frac{1}{2}  H_{\overline{\alpha}\beta}^{\kern0.75em \overline{\tau}}\partial_{\overline{\tau}},  \nonumber\\&\nabla^B_{\overline{\alpha}}\partial_{\overline{\beta}}=\frac{1}{2}H_{\overline{\alpha}\overline{\beta}}^{\kern0.75em\overline{\tau}}\partial_{\overline{\tau}},\quad \nabla_{\overline{\alpha}}\partial_{\overline{\beta}}=0, \label{26}
    \end{align}
where $\nabla^B$ is the Bismut connection defined in (\ref{20}) and $\nabla$ is the Levi-Civita connection. Moreover, 
\begin{align*}
    \partial_{\alpha}g_{\beta\overline{\gamma}}=-\frac{1}{2}H_{\alpha\beta\overline{\gamma}}, \quad \partial_{\overline{\alpha}}g_{\beta\overline{\gamma}}=\frac{1}{2}H_{\overline{\alpha}\beta\overline{\gamma}}, \quad \partial_{\alpha}g^{\beta\overline{\gamma}}=-\frac{1}{2}H_{\alpha}^{\kern0.25em\beta\overline{\gamma}}, \quad \partial_{\overline{\alpha}}g^{\beta\overline{\gamma}}=\frac{1}{2}H_{\overline{\alpha}}^{\kern0.25em\beta\overline{\gamma}}.
\end{align*}
\end{proposition}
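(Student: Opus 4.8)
The plan is to read off both connections directly from the metric by computing Christoffel symbols in the special coordinates of \Cref{L1}, the only genuinely geometric input being the dictionary between the first derivatives of $g$ and the components of $H=-d^c\omega$. The structure is: first establish the ``Moreover'' derivative relations, then the Levi-Civita symbols via the Koszul formula specialized to a Hermitian metric, and finally the Bismut symbols by adding the torsion correction.

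First I would establish the derivative relations. Writing $\omega=\sqrt{-1}\,g_{\alpha\overline{\beta}}\,dz^\alpha\wedge d\overline{z}^\beta$ and expanding $H=-d^c\omega=\sqrt{-1}(\partial-\overline{\partial})\omega$, a comparison of coefficients, using the total antisymmetry of the $3$-form $H$ and the fact that $H\in\Lambda^{2,1}\oplus\Lambda^{1,2}$ has no $(3,0)$ or $(0,3)$ part, yields an identity valid on the whole neighborhood,
\begin{equation*}
  H_{\alpha\beta\overline{\gamma}}=\partial_\beta g_{\alpha\overline{\gamma}}-\partial_\alpha g_{\beta\overline{\gamma}}.
\end{equation*}
Evaluating at $p$ and feeding in the normalization $\partial_\alpha g_{\beta\overline{\gamma}}+\partial_\beta g_{\alpha\overline{\gamma}}=0$ of \Cref{L1} collapses this antisymmetric combination to a single term, giving $\partial_\alpha g_{\beta\overline{\gamma}}=-\tfrac12 H_{\alpha\beta\overline{\gamma}}$; conjugation produces the $\partial_{\overline{\alpha}}g_{\beta\overline{\gamma}}$ statement, and differentiating $g^{\beta\overline{\gamma}}g_{\delta\overline{\gamma}}=\delta^\beta_\delta$ at $p$ (where $g_{\alpha\overline{\beta}}=\delta_{\alpha\beta}$) transfers both to the inverse metric.

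Next I would compute the Levi-Civita symbols $\Gamma^C_{AB}=\tfrac12 g^{CD}(\partial_A g_{BD}+\partial_B g_{AD}-\partial_D g_{AB})$ at $p$, where capital indices run over holomorphic and antiholomorphic types. The Hermitian vanishing $g_{\alpha\beta}=g_{\overline{\alpha}\overline{\beta}}=0$ kills most terms: for $\nabla_\alpha\partial_\beta$ the only surviving symbol is $\Gamma^\tau_{\alpha\beta}=\tfrac12 g^{\tau\overline{\sigma}}(\partial_\alpha g_{\beta\overline{\sigma}}+\partial_\beta g_{\alpha\overline{\sigma}})$, which vanishes at $p$ precisely by the normalization of \Cref{L1}, so $\nabla_\alpha\partial_\beta=0$ (and conjugately $\nabla_{\overline{\alpha}}\partial_{\overline{\beta}}=0$). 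For the mixed derivative $\nabla_\alpha\partial_{\overline{\beta}}$ the surviving combinations are $\partial_{\overline{\beta}}g_{\alpha\overline{\sigma}}-\partial_{\overline{\sigma}}g_{\alpha\overline{\beta}}$ and $\partial_\alpha g_{\sigma\overline{\beta}}-\partial_\sigma g_{\alpha\overline{\beta}}$; substituting the first-derivative dictionary and folding pairs such as $H_{\overline{\beta}\alpha\overline{\tau}}=-H_{\alpha\overline{\beta}\overline{\tau}}$ via total antisymmetry gives exactly $\nabla_\alpha\partial_{\overline{\beta}}=-\tfrac12 H_{\alpha\overline{\beta}}{}^{\tau}\partial_\tau+\tfrac12 H_{\alpha\overline{\beta}}{}^{\overline{\tau}}\partial_{\overline{\tau}}$, with $\nabla_{\overline{\alpha}}\partial_\beta$ handled identically.

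Finally I would pass to the Bismut connection through its defining relation $\langle\nabla^B_XY,Z\rangle=\langle\nabla_XY,Z\rangle+\tfrac12 H(X,Y,Z)$ (equation (\ref{20}) with $H=-d^c\omega$): each $\nabla^B$ symbol is its Levi-Civita value plus the raised torsion term $\tfrac12 g^{-1}H$. For example $\nabla^B_\alpha\partial_\beta=\tfrac12 H_{\alpha\beta}{}^{\tau}\partial_\tau$ because the Levi-Civita part vanishes and the $(3,0)$-component $H_{\alpha\beta\sigma}$ is zero, while in $\nabla^B_\alpha\partial_{\overline{\beta}}$ the $\partial_\tau$-contributions $-\tfrac12$ and $+\tfrac12$ cancel and the $\partial_{\overline{\tau}}$-contributions $+\tfrac12$ and $+\tfrac12$ add, leaving $H_{\alpha\overline{\beta}}{}^{\overline{\tau}}\partial_{\overline{\tau}}$. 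I expect the only delicate point to be the bookkeeping: fixing the sign in the $H$--$\partial g$ dictionary consistently with the paper's conventions, and then repeatedly invoking the total antisymmetry of $H$ (and the absence of $(3,0)$/$(0,3)$ parts) to merge two raw derivative terms into a single $H$-component with the right coefficient. Everything beyond that is the standard Koszul computation specialized to a Hermitian metric.
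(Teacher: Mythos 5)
Your proposal is correct, but it runs the paper's argument in reverse, with a different key input. The paper never expands $H=-d^c\omega$ in coordinates. Instead it exploits the structural characterization of $\nabla^B$ as the Hermitian connection with torsion $H$: the torsion identity $\nabla^B_\alpha\partial_{\overline{\beta}}-\nabla^B_{\overline{\beta}}\partial_\alpha=H(\partial_\alpha,\partial_{\overline{\beta}})^{\sharp}$, split by type using $\nabla^BJ=0$, gives the mixed Bismut symbols immediately; then $\nabla^Bg=0$ together with the normalization of \Cref{L1} shows $\nabla_\alpha\partial_\beta\in T^{0,1}$, which combined with $\nabla^B_\alpha\partial_\beta=\nabla_\alpha\partial_\beta+\tfrac12 H_{\alpha\beta}{}^{\tau}\partial_\tau\in T^{1,0}$ forces $\nabla_\alpha\partial_\beta=0$; finally the dictionary $\partial_\alpha g_{\beta\overline{\gamma}}=-\tfrac12 H_{\alpha\beta\overline{\gamma}}$ drops out of metric compatibility as a corollary. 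You instead prove the dictionary first, by expanding $H=-d^c\omega$ (your identity $H_{\alpha\beta\overline{\gamma}}=\partial_\beta g_{\alpha\overline{\gamma}}-\partial_\alpha g_{\beta\overline{\gamma}}$ is correct in the paper's conventions, and holds on the whole chart rather than only at $p$), then run the Koszul formula, then add the torsion. Each route has its advantages: the paper's argument is shorter, gets all type information for free from $\nabla^BJ=0$, and avoids the sign-sensitive coordinate expansion of $d^c$; yours is more elementary and self-contained (only the definitions of $d^c$ and of the Levi-Civita connection enter), it effectively re-proves at $p$ that $\nabla^B$ preserves types rather than assuming it, and it yields the mixed Levi-Civita formulas $\nabla_\alpha\partial_{\overline{\beta}}$, $\nabla_{\overline{\alpha}}\partial_\beta$ directly, whereas the paper leaves them implicit (there they follow by subtracting $\tfrac12 H(\partial_\alpha,\partial_{\overline{\beta}})^{\sharp}$ from the Bismut symbols). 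The only point demanding care in your write-up is the one you flag yourself: the sign in the dictionary must come out exactly as $\partial_\alpha g_{\beta\overline{\gamma}}=-\tfrac12 H_{\alpha\beta\overline{\gamma}}$ at $p$, since every subsequent Christoffel symbol inherits that sign.
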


\begin{proof}
By the definition (\ref{20}), 
\begin{align*}
    \nabla^B_{\alpha}\partial_{\overline{\beta}}-\nabla^B_{\overline{\beta}}\partial_{\alpha}=H(\partial_\alpha,\partial_{\overline{\beta}})=H_{\alpha\overline{\beta}}^{\kern0.75em\gamma}\partial_{\gamma}+H_{\alpha\overline{\beta}}^{\kern0.75em\overline{\gamma}}\partial_{\overline{\gamma}}.
\end{align*}
Since $\nabla^B J=0$, the connection $\nabla^B$ preserves types of vector fields. We see that 
\begin{align*}
    \nabla^B_{\alpha}\partial_{\overline{\beta}}=H_{\alpha\overline{\beta}}^{\kern0.75em\overline{\gamma}}\partial_{\overline{\gamma}}\quad \text{  and  }\quad \nabla^B_{\overline{\beta}}\partial_{\alpha}=H_{\overline{\beta}\alpha}^{\kern0.75em\gamma}\partial_{\gamma}.
\end{align*}
Adopt the special coordinate constructed in \Cref{L1}, we notice that $\nabla_\alpha\partial_{\beta}\in T^{0,1}$ since 
\begin{align*}
    0&=\partial_{\alpha}g_{\beta\overline{\gamma}}+\partial_{\beta}g_{\alpha\overline{\gamma}}
    \\&=g(\nabla^B_\alpha\partial_{\beta},\partial_{\overline{\gamma}})+g(\partial_{\beta},\nabla^B_\alpha\partial_{\overline{\gamma}})+g(\nabla^B_\beta\partial_{\alpha},\partial_{\overline{\gamma}})+g(\partial_{\alpha},\nabla^B_\beta\partial_{\overline{\gamma}})
    \\&=2g(\nabla_\alpha\partial_{\beta},\partial_{\overline{\gamma}}).
\end{align*}
Besides,
\begin{align*}
    \nabla^B_\alpha \partial_{\beta}=\nabla_\alpha \partial_{\beta}+\frac{1}{2}H(\partial_\alpha,\partial_\beta)=\nabla_\alpha \partial_{\beta}+\frac{1}{2}H_{\alpha\beta}^{\kern0.75em\tau}\partial_{\tau}\in T^{1,0}.
\end{align*}
We conclude that 
\begin{align*}
    \nabla_\alpha^B\partial_{\beta}=\frac{1}{2}H_{\alpha\beta}^{\kern0.75em\gamma}\partial_{\gamma} \quad \text{and} \quad \nabla_\alpha\partial_{\beta}=0.
\end{align*}
Next,
\begin{align*}
    \partial_{\alpha}g_{\beta\overline{\gamma}}&=g(\nabla^B_\alpha\partial_{\beta},\partial_{\overline{\gamma}})+g(\partial_{\beta},\nabla^B_\alpha\partial_{\overline{\gamma}})=-\frac{1}{2}H_{\alpha\beta\overline{\gamma}}.
\end{align*}
For the inverse, we compute
\begin{align*}
    0=\partial_{\alpha}(g_{\beta\overline{\gamma}}g^{\overline{\gamma}\tau})=-\frac{1}{2}H_{\alpha\beta\overline{\gamma}}g^{\overline{\gamma}\tau}+g_{\beta\overline{\gamma}}\partial_{\alpha}g^{\overline{\gamma}\tau}
\end{align*}
so 
\begin{align*}
    \partial_{\alpha}g^{\tau\overline{\gamma}}=\frac{1}{2}g^{\overline{\gamma}\beta}H_{\alpha\beta\overline{\delta}}g^{\overline{\delta}\tau}=-\frac{1}{2}H_{\alpha}^{\kern0.25em\tau\overline{\gamma}}.
\end{align*}
\end{proof}

We would like to apply this coordinate to the pluriclosed steady solitons. First, we  have the following proposition.
\begin{proposition}
Let $(M^{2n},g,H,J,f)$ be a pluriclosed steady soliton. Then,
\begin{align}
    \nabla^B(\theta-df)=\rho_B\circ J. \label{NT}
\end{align}
In particular, if $g$ is a Bismut--Hermitian--Einstein metric then $\theta-df$ is Bismut-parallel. 
\end{proposition}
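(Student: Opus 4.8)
The plan is to prove the identity $\nabla^B(\theta - df) = \rho_B \circ J$ by decomposing both sides according to type and matching them against the formulas (\ref{23}) for $\rho_B^{1,1}(\cdot,J\cdot)$ and $\rho_B^{2,0+0,2}(\cdot,J\cdot)$, using the soliton equations (\ref{s}) to rewrite the curvature terms. First I would note that since $\nabla^B$ is a metric, $J$-compatible connection, $\nabla^B(\theta-df)$ is naturally a $2$-tensor, and I would separate its symmetric and skew-symmetric parts. The strategy is to show that the symmetric part reproduces $\rho_B^{1,1}\circ J = \Rc - \tfrac14 H^2 + \tfrac12\mathcal{L}_{\theta^\sharp}g$ and the skew-symmetric part reproduces $\rho_B^{2,0+0,2}\circ J = -\tfrac12 d^*H + \tfrac12 d\theta - \tfrac12 \iota_{\theta^\sharp}H$, after substituting the soliton conditions.

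Concretely, I would begin from the relation between the Bismut and Levi-Civita connections, $\nabla^B = \nabla + \tfrac12 g^{-1}H$, to expand $\nabla^B_X(\theta-df)$ as $\nabla_X(\theta-df) + \tfrac12 H(X, (\theta-df)^\sharp, \cdot)$. The gradient term $\nabla^2 f$ is symmetric, and I would invoke the first soliton equation $\Rc - \tfrac14 H^2 + \nabla^2 f = 0$ to replace $\nabla^2 f$ by $-(\Rc - \tfrac14 H^2)$; meanwhile $\nabla\theta$ can be split into its symmetric part $\tfrac12 \mathcal{L}_{\theta^\sharp}g$ and its skew-symmetric part $\tfrac12 d\theta$. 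Matching the symmetric pieces then gives exactly the $\rho_B^{1,1}\circ J$ formula. For the skew part, the second soliton equation $d^*H + \iota_{\nabla f}H = 0$ is the key input: together with the $H$-correction term $\tfrac12\iota_{(\theta-df)^\sharp}H$ coming from the Bismut torsion, it should assemble into $-\tfrac12 d^*H + \tfrac12 d\theta - \tfrac12\iota_{\theta^\sharp}H$. Throughout, the special coordinates and Christoffel symbols of \Cref{P6}, together with Lemma formula (\ref{22}) relating $\theta$ and $H$, are the natural tools for the bookkeeping.

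The main obstacle I anticipate is the careful tracking of the Bismut torsion contributions in the skew-symmetric part: the term $\tfrac12 g^{-1}H$ feeds a copy of $\iota_{df}H$ and $\iota_{\theta^\sharp}H$ into the computation, and one must verify that the $df$-dependent pieces cancel against the contribution of the soliton equation $d^*H = -\iota_{\nabla f}H$ so that only the $\theta^\sharp$ term survives, precisely reproducing the $-\tfrac12\iota_{\theta^\sharp}H$ in (\ref{23}). Getting the signs and factors of $\tfrac12$ consistent between the $d^*H$, $d\theta$, and $\iota H$ terms is where the delicacy lies, and I would check it either in the special coordinates of \Cref{P6} or by a coordinate-free identity for $\nabla\theta$ versus $d\theta$ and $\mathcal{L}_{\theta^\sharp}g$.

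Finally, the "in particular" claim is immediate: if $g$ is Bismut--Hermitian--Einstein then $\rho_B = 0$, so the right-hand side of (\ref{NT}) vanishes identically, which says precisely that $\theta - df$ is parallel with respect to $\nabla^B$, i.e.\ Bismut-parallel.
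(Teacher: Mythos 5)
Your proposal is correct, and it takes a genuinely different route from the paper. The paper's proof invokes \cite{J4} Proposition 4.1 (recorded as (\ref{V})): $V=\tfrac12(\theta^\sharp-\nabla f)$ is holomorphic, $JV$ is Killing, and $\mathcal{L}_V\omega=\rho_B^{1,1}$; it then works in the special complex coordinates of \Cref{P6}, computing $\partial_\alpha V_\beta$, $d\theta$, $\mathcal{L}_Vg$ and $\mathcal{L}_{JV}g$ componentwise to identify $\nabla^B V$ with $\tfrac12\rho_B\circ J$. Your argument bypasses both the citation and the coordinates: from $(\nabla^B_X(\theta-df))(Y)=(\nabla_X(\theta-df))(Y)+\tfrac12 H(X,(\theta-df)^\sharp,Y)$, the symmetric part in $(X,Y)$ is $\tfrac12\mathcal{L}_{\theta^\sharp}g-\nabla^2 f$, which the first equation of (\ref{s}) converts to $\Rc-\tfrac14 H^2+\tfrac12\mathcal{L}_{\theta^\sharp}g=\rho_B^{1,1}(\cdot,J\cdot)$, while the skew part is $\tfrac12 d\theta-\tfrac12\iota_{\theta^\sharp}H+\tfrac12\iota_{\nabla f}H$, which the second equation of (\ref{s}) converts to $-\tfrac12 d^*H+\tfrac12 d\theta-\tfrac12\iota_{\theta^\sharp}H=\rho_B^{2,0+0,2}(\cdot,J\cdot)$; since a $(1,1)$-form composed with $J$ is symmetric and a $(2,0)+(0,2)$-form composed with $J$ is skew, these two pieces are exactly the symmetric and skew parts of $\rho_B\circ J$, and (\ref{NT}) follows. (That type fact is the one step you leave implicit; it deserves a line in a written-up version.) Your proof is more elementary and self-contained, needing only (\ref{s}) and the general Hermitian identity (\ref{23}), and it makes visible that the two soliton equations are precisely the symmetric and skew halves of (\ref{NT}); what the paper's heavier route buys is the package (\ref{V}), which it reuses repeatedly later (e.g.\ in (\ref{ll1}) and Lemma \ref{LV}), so its machinery is amortized over the rest of the paper. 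One wording correction: the $\tfrac12\iota_{\nabla f}H$ torsion contribution does not ``cancel'' anything --- via $d^*H=-\iota_{\nabla f}H$ it simply \emph{is} the $-\tfrac12 d^*H$ term of (\ref{23}), as your earlier sentence (``it should assemble into\dots'') correctly states.
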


\begin{proof}
As shown in \cite{J4} Proposition 4.1, suppose $(M,g,H,J,f)$ is a pluriclosed steady soliton, the vector field  $V=\frac{1}{2}(\theta^{\sharp}-\nabla f)$ satisfies 
\begin{align}
    L_VJ=0,\quad L_{JV}g=0, \quad L_{V}\omega=\rho_B^{1,1}. \label{V}
\end{align} 
In particular, $JV$ is holomorphic and Killing. For convenience, we write $V=V^\gamma\partial_\gamma+V^{\overline{\gamma}}\partial_{\overline{\gamma}}$ and compute that 
 \begin{align*}
     (\mathcal{L}_VJ)(\partial_\alpha)&=\mathcal{L}_V (J\partial_\alpha)-J(\mathcal{L}_V\partial_\alpha)
     \\&=\sqrt{-1}[V,\partial_\alpha]-J[V,\partial_\alpha]
     \\&=-\sqrt{-1}\big(\partial_\alpha(V^\gamma)\partial_\gamma+\partial_\alpha(V^{\overline{\gamma}})\partial_{\overline{\gamma}}\big)+J\big(\partial_\alpha(V^\gamma)\partial_\gamma+\partial_\alpha(V^{\overline{\gamma}})\partial_{\overline{\gamma}}\big)
     \\&=-2\sqrt{-1}\partial_\alpha(V^{\overline{\gamma}})\partial_{\overline{\gamma}}.
 \end{align*}
Similarly,
\begin{align*}
    (\mathcal{L}_VJ)(\partial_{\overline{\alpha}})=2\sqrt{-1}\partial_{\overline{\alpha}}(V^{\gamma})\partial_{\gamma}.
\end{align*}
Since $V$ is holomorphic, we get $\partial_\alpha(V^{\overline{\gamma}})=\partial_{\overline{\alpha}}(V^{\gamma})=0$. Then,
\begin{align*}
    &\partial_\alpha(V_\beta)=\partial_\alpha(g_{\beta\overline{\gamma}}V^{\overline{\gamma}})=-\frac{1}{2}H_{\alpha\beta\overline{\gamma}}V_{\gamma},
    \\&\partial_{\overline{\alpha}}(V_{\overline{\beta}})=\partial_{\overline{\alpha}}(V^\gamma g_{\gamma\overline{\beta}})=\frac{1}{2}H_{\overline{\alpha}\gamma\overline{\beta}}V_{\overline{\gamma}}.
\end{align*}
By (\ref{23}), 
\begin{align*}
    \rho_B(\partial_\alpha,J\partial_\beta)&=-\frac{1}{2}(d^*H)_{\alpha\beta}+\frac{1}{2}(d\theta)_{\alpha\beta}-\frac{1}{2}(\iota_{\theta_{\sharp}}H)_{\alpha\beta}
    \\&=\frac{1}{2}(d\theta)_{\alpha\beta}-(\iota_VH)_{\alpha\beta}.
\end{align*}
Note that 
\begin{align*}
    (d\theta)_{\alpha\beta}&=\partial_\alpha(\theta_\beta)-\partial_\beta(\theta_\alpha)
    \\&=2\partial_\alpha(V_\beta)-2\partial_\beta(V_\alpha)
    \\&=-2H_{\alpha\beta\overline{\gamma}}V_{\gamma}
    \\&=-2(\iota_VH)_{\alpha\beta}.
\end{align*}
Thus, 
\begin{align*}
    (\nabla^B_\alpha V)_\beta&=\partial_\alpha(V_\beta)-\frac{1}{2}(\iota_VH)_{\alpha\beta}
    \\&=-(\iota_VH)_{\alpha\beta}
    \\&=\frac{1}{2}\rho_B(\partial_\alpha,J\partial_\beta).
\end{align*}
On the other hand, 
\begin{align*}
    (\mathcal{L}_Vg)_{\alpha\overline{\beta}}&=V\big( g(\partial_\alpha,\partial_{\overline{\beta}}) \big)-g(\mathcal{L}_V\partial_\alpha,\partial_{\overline{\beta}})-g(\partial_\alpha,\mathcal{L}_V\partial_{\overline{\beta}})
    \\&=V^\gamma\partial_\gamma(g_{\alpha\overline{\beta}})+V^{\overline{\gamma}}\partial_{\overline{\gamma}}(g_{\alpha\overline{\beta}})-g([V,\partial_\alpha],\partial_{\overline{\beta}})-g(\partial_\alpha,[V,\partial_{\overline{\beta}}])
    \\&=-\frac{1}{2}H_{\gamma\alpha\overline{\beta}}V^\gamma+\frac{1}{2}H_{\overline{\gamma}\alpha\overline{\beta}}V^{\overline{\gamma}}+\partial_\alpha(V^\gamma)g_{\gamma\overline{\beta}}+\partial_{\overline{\beta}}(V^{\overline{\gamma}})g_{\alpha\overline{\gamma}}
    \\&=-\rho_B(J\partial_\alpha,\partial_{\overline{\beta}}),
\end{align*}
so
\begin{align*}
    \partial_\alpha(V_{\overline{\beta}})+\partial_{\overline{\beta}}(V_\alpha)&=\partial_\alpha(V^\gamma g_{\gamma\overline{\beta}})+\partial_{\overline{\beta}}(V^{\overline{\gamma}}g_{\alpha\overline{\gamma}})
    \\&=-\frac{1}{2}H_{\alpha\gamma\overline{\beta}}V^\gamma+\frac{1}{2}H_{\overline{\beta}\alpha\overline{\gamma}}V^{\overline{\gamma}}+\partial_\alpha(V^\gamma)g_{\gamma\overline{\beta}}+\partial_{\overline{\beta}}(V^{\overline{\gamma}})g_{\alpha\overline{\gamma}}
    \\&=H_{\alpha\overline{\beta}\gamma}V^\gamma-H_{\alpha\overline{\beta}\overline{\gamma}}V^{\overline{\gamma}}-\rho_B(J\partial_\alpha,\partial_{\overline{\beta}}).
\end{align*}
Using the fact that $JV$ is also a Killing field, we get

\begin{align*}
    (\mathcal{L}_{JV}g)_{\alpha\overline{\beta}}&=JV\big( g(\partial_\alpha,\partial_{\overline{\beta}}) \big)-g([JV,\partial_\alpha],\partial_{\overline{\beta}})-g(\partial_\alpha,[JV,\partial_{\overline{\beta}}])
    \\&=\sqrt{-1}V^\gamma\partial_\gamma(g_{\alpha\overline{\beta}})-\sqrt{-1}V^{\overline{\gamma}}\partial_{\overline{\gamma}}(g_{\alpha\overline{\beta}})+\sqrt{-1}\partial_\alpha(V^\gamma)g_{\gamma\overline{\beta}}-\sqrt{-1}\partial_{\overline{\beta}}(V^{\overline{\gamma}})g_{\alpha\overline{\gamma}}
    \\&=\sqrt{-1}\Big(-\frac{1}{2}H_{\gamma\alpha\overline{\beta}}V^\gamma-\frac{1}{2}H_{\overline{\gamma}\alpha\overline{\beta}}V^{\overline{\gamma}}+\partial_\alpha(V^\gamma)g_{\gamma\overline{\beta}}-\partial_{\overline{\beta}}(V^{\overline{\gamma}})g_{\alpha\overline{\gamma}}\Big)
    \\&=0,
\end{align*}
so
\begin{align*}
   \partial_\alpha(V_{\overline{\beta}})-\partial_{\overline{\beta}}(V_\alpha)&=\partial_\alpha(V^\gamma g_{\gamma\overline{\beta}})-\partial_{\overline{\beta}}(V^{\overline{\gamma}}g_{\alpha\overline{\gamma}})
   \\&=-\frac{1}{2}H_{\alpha\gamma\overline{\beta}}V^\gamma-\frac{1}{2}H_{\overline{\beta}\alpha\overline{\gamma}}V^{\overline{\gamma}}+\partial_\alpha(V^\gamma)g_{\gamma\overline{\beta}}-\partial_{\overline{\beta}}(V^{\overline{\gamma}})g_{\alpha\overline{\gamma}}
\\&=H_{\gamma\alpha\overline{\beta}}V^\gamma+H_{\overline{\gamma}\alpha\overline{\beta}}V^{\overline{\gamma}}.
\end{align*}
It implies that 
\begin{align*}
    \partial_\alpha(V_{\overline{\beta}})=H_{\alpha\overline{\beta}\gamma}V^\gamma-\frac{1}{2}\rho_B(J\partial_\alpha,\partial_{\overline{\beta}}), \quad \partial_{\overline{\beta}}(V_\alpha)=-H_{\alpha\overline{\beta}\overline{\gamma}}V^{\overline{\gamma}}-\frac{1}{2}\rho_B(J\partial_\alpha,\partial_{\overline{\beta}}),
\end{align*}
i.e.,
\begin{align*}
    (\nabla_\alpha^BV)_{\overline{\beta}}=-\frac{1}{2}\rho_B(J\partial_\alpha,\partial_{\overline{\beta}}), \quad (\nabla^B_{\overline{\beta}}V)_\alpha=-\frac{1}{2}\rho_B(J\partial_{\overline{\beta}},\partial_\alpha).
\end{align*}
\end{proof}

Secondly, we express the Lee form $\theta$ and $\rho_B$ in terms of the complex coordinate and we will use these results later.
\begin{lemma}
Let $(M,g,J)$ be a Hermitian manifold and $H=-d^c\omega \in \Lambda^{2,1}\oplus \Lambda^{1,2}$. Then,    
\begin{align}
    H_{\overline{\alpha}\alpha\beta}=\theta_\beta, \quad H_{\overline{\alpha}\alpha\overline{\beta}}=-\theta_{\overline{\beta}}, \label{28}
\end{align}
and
\begin{align}
    \rho_B(\partial_\alpha,J\partial_{\overline{\beta}})=R^B_{\alpha\overline{\beta}\tau\overline{\tau}},\quad  \rho_B(\partial_\alpha,J\partial_\beta)=-R^B_{\alpha\beta\tau\overline{\tau}}. \label{29}
\end{align}
\end{lemma}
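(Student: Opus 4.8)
The plan is to prove both identities by converting the real-frame contractions in the definitions of $\theta$ and $\rho_B$ into the complex coordinate frame furnished by Proposition~\ref{P6}. The one genuinely computational device I would isolate first is a frame-conversion identity. Fix the special coordinates at $p$ with $g_{\alpha\overline\beta}=\delta_{\alpha\beta}$, write $z^\tau=x^\tau+\sqrt{-1}\,y^\tau$, and build the real orthonormal frame $e_{2\tau-1}=\tfrac{1}{\sqrt2}\partial_{x^\tau}$, $e_{2\tau}=\tfrac{1}{\sqrt2}\partial_{y^\tau}$ (the $\tfrac{1}{\sqrt2}$ is forced by the normalization $g_{\alpha\overline\beta}=\delta_{\alpha\beta}$). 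Since $\partial_{x^\tau}=\partial_\tau+\partial_{\overline\tau}$ and $\partial_{y^\tau}=\sqrt{-1}(\partial_\tau-\partial_{\overline\tau})$, one checks $Je_{2\tau-1}=e_{2\tau}$ and $Je_{2\tau}=-e_{2\tau-1}$. Expanding, for any $2$-tensor $A$ antisymmetric in its two slots I expect
\[
\sum_i A(e_i,Je_i)=-2\sqrt{-1}\,A(\partial_\tau,\partial_{\overline\tau}),\qquad \sum_i A(Je_i,e_i)=2\sqrt{-1}\,A(\partial_\tau,\partial_{\overline\tau})
\]
(summation over $\tau$), the two signs differing by the antisymmetry used to swap the contracted slots. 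This single lemma is then reused twice.

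For \eqref{28} I apply the Lee-form formula \eqref{22}, namely $\theta(X)=-\tfrac12H(JX,e_i,Je_i)$, with $X=\partial_\beta$. Setting $A(Y,Z)=H(\partial_\beta,Y,Z)$, which is antisymmetric because $H$ is a $3$-form, and using $J\partial_\beta=\sqrt{-1}\partial_\beta$, the first frame-conversion identity turns $\theta_\beta$ into a multiple of $H_{\beta\tau\overline\tau}$; the powers of $\sqrt{-1}$ collapse to a sign, giving $\theta_\beta=-H_{\beta\tau\overline\tau}$, and a cyclic permutation of the $3$-form indices rewrites this as $\theta_\beta=H_{\overline\tau\tau\beta}$. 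The conjugate identity $H_{\overline\alpha\alpha\overline\beta}=-\theta_{\overline\beta}$ then follows most cheaply by conjugating and using that $\theta$ and $H$ are real, or equivalently by rerunning the computation with $X=\partial_{\overline\beta}$.

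For \eqref{29} I apply the definition \eqref{21}, $\rho_B(X,Y)=\tfrac12\langle R^B(X,Y)Je_i,e_i\rangle=\tfrac12 R^B(X,Y,Je_i,e_i)$, with $(X,Y)=(\partial_\alpha,\partial_{\overline\beta})$ and then $(\partial_\alpha,\partial_\beta)$. Here the relevant tensor is $B(Z,W)=R^B(\partial_\alpha,\cdot,Z,W)$, antisymmetric in $Z,W$ because $\nabla^B$ is a metric (indeed Hermitian) connection; the second frame-conversion identity yields $\rho_B(\partial_\alpha,\partial_{\overline\beta})=\sqrt{-1}\,R^B_{\alpha\overline\beta\tau\overline\tau}$ and $\rho_B(\partial_\alpha,\partial_\beta)=\sqrt{-1}\,R^B_{\alpha\beta\tau\overline\tau}$. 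Multiplying by the eigenvalues $J\partial_{\overline\beta}=-\sqrt{-1}\partial_{\overline\beta}$ and $J\partial_\beta=\sqrt{-1}\partial_\beta$ gives exactly the stated signs $\rho_B(\partial_\alpha,J\partial_{\overline\beta})=R^B_{\alpha\overline\beta\tau\overline\tau}$ and $\rho_B(\partial_\alpha,J\partial_\beta)=-R^B_{\alpha\beta\tau\overline\tau}$.

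The main obstacle is purely bookkeeping: keeping the factors of $\sqrt{-1}$ and the signs consistent under the normalization $g_{\alpha\overline\beta}=\delta_{\alpha\beta}$, and invoking the correct antisymmetry (of $H$ as a $3$-form and of $R^B$ in its last two slots) at exactly the right step. Once the frame-conversion identity is pinned down with the constant $-2\sqrt{-1}$ (resp.\ $2\sqrt{-1}$), both \eqref{28} and \eqref{29} drop out by direct substitution.
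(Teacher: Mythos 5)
Your proposal is correct, and it is essentially the paper's own argument: your orthonormal frame $e_{2\tau-1}=\tfrac{1}{\sqrt2}(\partial_\tau+\partial_{\overline\tau})$, $e_{2\tau}=Je_{2\tau-1}$ coincides with the paper's frame $\{u_\tau,Ju_\tau\}$, and both proofs reduce \eqref{28} and \eqref{29} to the same conversion between real-frame contractions and complex-index contractions (you merely package that conversion as a reusable antisymmetric-tensor identity, while the paper carries it out inline for $H$ and $R^B$ separately). All signs and factors of $\sqrt{-1}$ in your computation check out against the paper's conventions.
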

\begin{proof}
According to (\ref{21}) and (\ref{22}), 
\begin{align*}
        \theta(X)=-\frac{1}{2}H(JX,e_i,Je_i), \quad  \rho_B(X,Y)= \frac{1}{2}\langle R^B(X,Y)Je_i,e_i\rangle
    \end{align*}
for any orthonormal frame $\{e_1,e_2,...,e_{2n}\}$.  In our case, we construct an orthonormal frame $\{u_1,u_2,...,u_n,Ju_1,Ju_2,...,Ju_n\}$ by
\begin{align*}
    u_{\alpha}=\frac{\partial_\alpha+\partial_{\overline{\alpha}}}{\sqrt{2}}, \quad Ju_{\alpha}=\frac{\partial_{\overline{\alpha}}-\partial_\alpha}{\sqrt{-2}}.
\end{align*}
Therefore,
\begin{align*}
    \theta(X)&=-\sum_{\alpha=1}^n H(JX,u_\alpha,Ju_\alpha),
    \\ \rho_B(X,Y)&=\sum_{\alpha=1}^n\langle R^B(X,Y)Ju_\alpha, u_\alpha\rangle.
\end{align*}
Then,
\begin{align*}
\sum_{\alpha=1}^n H_{\overline{\alpha}\alpha\beta}= \sum_{\alpha=1}^n H(\frac{u_\alpha+\sqrt{-1}Ju_\alpha}{\sqrt{2}},\frac{u_\alpha-\sqrt{-1}Ju_\alpha}{\sqrt{2}},\partial_\beta)=\sum_{\alpha=1}^n \sqrt{-1}H(Ju_\alpha,u_\alpha,\partial_\beta)=-\sqrt{-1}\theta(J\partial_\beta)=\theta_\beta,
\end{align*}
and
\begin{align*}
     \rho_B(\partial_\alpha,J\partial_{\overline{\beta}})=\sum_{\tau=1}^n \frac{1}{2\sqrt{-1}} \langle R^B(\partial_\alpha,J\partial_{\overline{\beta}}) (\partial_{\overline{\tau}}-\partial_\tau),\partial_\tau+\partial_{\overline{\tau}}\rangle=\sum_{\tau=1}^n R^B_{\alpha\overline{\beta}\tau\overline{\tau}}.
\end{align*}
\end{proof}

Lastly, we show that $f$ is invariant when $g$ is a Bismut--Hermitian--Einstein metric.
\begin{lemma}\label{LV}
    Suppose $(M,g,H)$ is a compact Bismut--Hermitian--Einstein manifold, then 
    \begin{align*}
        \mathcal{L}_{\theta^\sharp-\nabla f}f=0.
    \end{align*}
\end{lemma}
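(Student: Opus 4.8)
The plan is to exploit the symmetry generated by the vector field $V=\tfrac12(\theta^\sharp-\nabla f)$, which becomes Killing once $\rho_B=0$, together with the characterization of the potential $f$ as the ground state of a Schrödinger operator. Note first that $\mathcal{L}_{\theta^\sharp-\nabla f}f=2V(f)$, so it suffices to prove $V(f)=0$.

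First I would record the infinitesimal symmetries of $V$. Since a compact Bismut--Hermitian--Einstein metric is a pluriclosed steady soliton, the identities (\ref{V}) apply: $\mathcal{L}_V J=0$, $\mathcal{L}_{JV}g=0$ and $\mathcal{L}_V\omega=\rho_B^{1,1}$. With $\rho_B=0$ the last equation gives $\mathcal{L}_V\omega=0$; combined with $\mathcal{L}_V J=0$ and the relation $g(\cdot,\cdot)=\omega(\cdot,J\cdot)$ this yields $\mathcal{L}_V g=0$, so $V$ is Killing. Because $\mathcal{L}_V J=0$, the operator $\mathcal{L}_V$ preserves the $(p,q)$--decomposition and hence commutes with $\partial$, $\overline{\partial}$ and $d^c$; therefore $\mathcal{L}_V H=-\mathcal{L}_V d^c\omega=-d^c\mathcal{L}_V\omega=0$. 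Thus $V$ preserves both $g$ and $H$.

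Next I would use the variational characterization of $f$. By the remark following Definition \ref{D10}, the minimizer $f$ is determined by the fact that $w:=e^{-f/2}$ is the ground state of the Schrödinger operator $L:=-4\triangle+R-\tfrac1{12}|H|^2$, with eigenvalue $\lambda(g,b)$. Since $L$ is built only from $g$ and $|H|^2$, and $V$ preserves both, the flow $\phi_t$ of $V$ consists of isometries commuting with $L$. Hence $\phi_t^*w$ is again a ground state; as the bottom eigenvalue of a Schrödinger operator on a compact manifold is simple, with positive eigenfunction, we get $\phi_t^*w=\mu(t)\,w$ for a scalar $\mu(t)$ with $\mu(0)=1$. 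The normalization $\int_M w^2\,dV_g=\int_M e^{-f}\,dV_g=1$ together with $\phi_t$ being volume preserving forces $\mu(t)^2=1$, so $\mu\equiv 1$ and $\mathcal{L}_V w=0$. Differentiating $w=e^{-f/2}$ gives $V(f)=0$, whence $\mathcal{L}_{\theta^\sharp-\nabla f}f=2V(f)=0$.

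I expect the step requiring most care to be the spectral one: one must invoke simplicity of the lowest eigenvalue (so that the eigenspace is one-dimensional and thus preserved by $V$ only up to scale) and then pin down the scale using the isometry invariance of the $L^2$ norm and the normalization of $f$. The verification that $V$ preserves $H$, i.e.\ that $\mathcal{L}_V$ commutes with $d^c$ because $V$ is real holomorphic, is the other place to be precise. As a consistency check, (\ref{NT}) with $\rho_B=0$ shows $\theta-df$ is Bismut--parallel, so $|\theta^\sharp-\nabla f|$ is constant; this is compatible with, but weaker than, the claim, and it is the spectral argument that upgrades it to $V(f)=0$.
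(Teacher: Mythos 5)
Your proof is correct, but it takes a genuinely different route from the paper's. Both arguments begin the same way: since a compact Bismut--Hermitian--Einstein metric is a pluriclosed steady soliton, (\ref{V}) together with $\rho_B=0$ makes $V=\tfrac12(\theta^\sharp-\nabla f)$ a real holomorphic Killing field, and (as you verify via commutation of $\mathcal{L}_V$ with $d^c$) $\mathcal{L}_VH=0$. From there the paper stays elementary: it takes the Lie derivative along $\theta^\sharp-\nabla f$ of the traced soliton equation $R-\tfrac14|H|^2+\triangle f=0$; since a Killing field preserving $H$ annihilates $R$ and $|H|^2$ and commutes with $\triangle$, this gives $\triangle(\mathcal{L}_{\theta^\sharp-\nabla f}f)=0$, so $\mathcal{L}_{\theta^\sharp-\nabla f}f$ is a constant, and evaluating $\langle\theta^\sharp-\nabla f,\nabla f\rangle$ at a maximum point of $f$ (where $\nabla f=0$) shows the constant is zero. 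You instead invoke the spectral characterization from the remark after Definition \ref{D10}: $e^{-f/2}$ is the normalized positive ground state of $-4\triangle+R-\tfrac1{12}|H|^2$, the isometry flow of $V$ preserves this operator, the volume form, and the normalization, and simplicity of the ground state then forces $\phi_t^*e^{-f/2}=e^{-f/2}$, i.e.\ $V(f)=0$. What each buys: the paper's argument needs nothing beyond ``harmonic functions on compact manifolds are constant'' plus one pointwise evaluation, while yours requires simplicity of the lowest eigenvalue but is more conceptual and slightly stronger --- it shows $f$ (hence the measure $e^{-f}dV_g$) is invariant under \emph{any} Killing field preserving $H$, not just under $V$, and it does not use the traced soliton equation at all, only the variational characterization of $f$. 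One point worth making explicit in your write-up is that the $f$ in the lemma is indeed the minimizer realizing $\lambda(g,b)$ --- this is how the paper sets up the family $(g_t,b_t,f_t)$, so your appeal to the ground-state property is legitimate, but for a soliton potential defined only by (\ref{s}) one would first have to check it is an eigenfunction (hence, being positive, the ground state).
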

\begin{proof}
In Bismut--Hermitian--Einstein case, we note that $\theta^\sharp-\nabla f$ is a holomorphic Killing vector field by (\ref{V}). Thus, taking the Lee derivative of $\theta^\sharp-\nabla f$ to the equation $R-\frac{1}{4}|H|^2+\triangle f=0$ deduces that $\triangle(\mathcal{L}_{\theta^\sharp-\nabla f}f)=0$. Then, we conclude that $\mathcal{L}_{\theta^\sharp-\nabla f}f$ is a constant. By maximum principle, this constant must be 0 in the compact case.    
\end{proof}

\section{The stability of Bismut-Hermitian-Einstein Manifolds}

\subsection{Essential infinitesimal complex structure}
\begin{defn}
 Let $(M,g,J)$ be a complex manifold. A tensor $I\in \text{End}(TM)$ is called an \emph{infinitesimal variation of complex structure} if 
 \begin{align*}
     IJ+JI=0, \quad \text{and}\quad N'_J(I)=0,
 \end{align*}
 where $N$ denotes the Nijenhuis tensor. Moreover, we say that an infinitesimal variation of complex structure is \emph{$f$-essential} if 
 \begin{align*}
     \int_M \langle \mathcal{L}_ZJ,I \rangle e^{-f}dV_g=0\quad \text{  for any vector $Z$.}
 \end{align*}
\end{defn}

In the following, we use complex coordinates $\{\partial_\alpha,\partial_{\overline{\beta}}\}$ and write
\begin{align*}
    I=I_{\alpha}^\beta dz^\alpha\otimes \partial_{\beta}+I_{\overline{{\alpha}}}^\beta dz^{\overline{\alpha}}\otimes \partial_{\beta}+I_{\alpha}^{\overline{\beta}} dz^\alpha\otimes \partial_{\overline{\beta}}+I_{\overline{\alpha}}^{\overline{\beta}} dz^{\overline{\alpha}}\otimes \partial_{\overline{\beta}}.
\end{align*}
From Koiso \cite{Ko}, we see that if $I$ is an infinitesimal variation of complex structure then 
\begin{align*}
    I_{\alpha}^\beta=0 \quad \text{and} \quad \partial_\alpha I_\beta^{\overline{\gamma}}=\partial_\beta I_{\alpha}^{\overline{\gamma}}.
\end{align*}

\begin{lemma}\label{leta}
Let $(M,g,J)$ be a Hermitian manifold and $I$ be an $f$-essential infinitesimal variation of complex structure. Define    
a 2-tensor $\eta$ by 
\begin{align*}
    \eta(X,Y)=\omega(X,IY)=g(JX,IY), \text{ for any vectors $X$ and $Y$.}
\end{align*}
If $\eta$ is symmetric then $\eta$ is anti-Hermitian, $\divg^B_f\eta=0$. Moreover, in the coordinate we constructed in \Cref{L1}, we get
\begin{align}
    (\nabla^B_\alpha\eta)_{\beta\gamma}-(\nabla^B_\beta\eta)_{\alpha\gamma}=-H_{\alpha\beta\overline{\tau}}\eta_{\tau\gamma}+H_{\beta\gamma\overline{\tau}}\eta_{\tau\alpha}+H_{\gamma\alpha\overline{\tau}}\eta_{\tau\beta}. \label{k2} 
\end{align}
\end{lemma}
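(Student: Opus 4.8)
The plan is to treat the three assertions --- anti-Hermiticity, $\divg^B_f\eta=0$, and the identity \eqref{k2} --- separately, working throughout in the coordinates of \Cref{L1}. Since $IJ=-JI$, the endomorphism $I$ interchanges $T^{1,0}$ and $T^{0,1}$, so only the components $I_\alpha^{\overline{\mu}}$ and $I_{\overline{\alpha}}^{\mu}$ are nonzero. A direct evaluation then gives $\eta_{\alpha\beta}=\sqrt{-1}\,I_\beta^{\overline{\mu}}g_{\alpha\overline{\mu}}$ and $\eta_{\alpha\overline{\beta}}=\sqrt{-1}\,I_{\overline{\beta}}^{\mu}g_{\alpha\mu}=0$, the last equality because $g$ is Hermitian. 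Thus $\eta$ carries only $(2,0)$ and $(0,2)$ components, which is precisely the statement $\eta(JX,JY)=-\eta(X,Y)$; in fact anti-Hermiticity follows from $IJ=-JI$ alone and does not use symmetry of $\eta$.

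For the divergence I would exploit the $f$-essential hypothesis. A local computation, using that the Bismut connection preserves the type decomposition (so the $(2,0)+(0,2)$-tensor $\nabla^B\eta$ again has no mixed components) together with \Cref{P6}, collapses $\divg^B_f\eta$ to a single contracted term, namely the trace over the anti-holomorphic slot, $(\divg^B_f\eta)_\gamma=-\sqrt{-1}\,\big[(\nabla^B_{\overline{\mu}}I)_\gamma^{\overline{\mu}}-(\nabla_{\overline{\mu}}f)\,I_\gamma^{\overline{\mu}}\big]$. I would then recognize $\divg^B_f\eta$ as the $f$-twisted $L^2$ adjoint of the operator $Z\mapsto\mathcal{L}_ZJ$ evaluated on $I$: a computation gives $(\mathcal{L}_ZJ)_\alpha^{\overline{\mu}}=-2\sqrt{-1}\,\partial_\alpha Z^{\overline{\mu}}$, and pairing with $I$ and integrating by parts against $e^{-f}dV_g$ shifts the derivative onto $I$; rewriting $\partial$ through $\nabla^B$ by \Cref{P6} --- the torsion and weight corrections being exactly those encoded in $\divg^B_f$ --- presents $\int_M\langle\mathcal{L}_ZJ,I\rangle e^{-f}dV_g$ as a nonzero multiple of $\int_M\langle Z^\sharp,\divg^B_f\eta\rangle e^{-f}dV_g$. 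As this vanishes for every $Z$ by $f$-essentiality, we conclude $\divg^B_f\eta=0$.

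Finally, for \eqref{k2} the idea is to pass from $\eta$ to $I$ and invoke integrability. Because $\omega$ is Bismut-parallel, $(\nabla^B_\alpha\eta)_{\beta\gamma}=\sqrt{-1}\,(\nabla^B_\alpha I)_\gamma^{\overline{\beta}}$ at $p$, and the symmetry of $\eta$ propagates to $\nabla^B\eta$, giving $(\nabla^B_\alpha I)_\gamma^{\overline{\beta}}=(\nabla^B_\alpha I)_\beta^{\overline{\gamma}}$. This lets me rewrite the antisymmetrized left-hand side of \eqref{k2} with the upper index held fixed, so that only the two lower covariant indices are exchanged; that combination is controlled by the Koiso integrability relation $\partial_\alpha I_\beta^{\overline{\gamma}}=\partial_\beta I_\alpha^{\overline{\gamma}}$ together with the Bismut Christoffel symbols of \Cref{P6}, yielding $(\nabla^B_\alpha I)_\beta^{\overline{\gamma}}-(\nabla^B_\beta I)_\alpha^{\overline{\gamma}}=-H_{\alpha\beta\overline{\tau}}I_\tau^{\overline{\gamma}}-H_{\alpha\gamma\overline{\tau}}I_\beta^{\overline{\tau}}+H_{\beta\gamma\overline{\tau}}I_\alpha^{\overline{\tau}}$. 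Converting back with $\sqrt{-1}\,I_\bullet^{\overline{\tau}}=\eta_{\tau\bullet}$ and $-H_{\alpha\gamma\overline{\tau}}=H_{\gamma\alpha\overline{\tau}}$ reproduces the right-hand side of \eqref{k2} exactly.

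The main obstacle is the divergence-free statement, which is the only genuinely global step: the delicate point is to verify that the weighted integration by parts, together with the conversion of $\partial$ into $\nabla^B$, assembles the torsion terms into the Bismut divergence $\divg^B_f$ precisely, rather than into the Levi-Civita or Chern divergence. By comparison, anti-Hermiticity is immediate from $IJ=-JI$, and \eqref{k2} is essentially bookkeeping once the symmetry of $\nabla^B\eta$ reduces it to the integrability identity for $I$.
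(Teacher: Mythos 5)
Your proposal is correct and follows essentially the same route as the paper's own proof: anti-Hermiticity via the type decomposition forced by $IJ+JI=0$, the divergence statement via $f$-essentiality paired against the operator $Z\mapsto\mathcal{L}_ZJ$ and an $e^{-f}$-weighted integration by parts (where the symmetry of $\eta$ is what makes the adjoint collapse exactly to $\divg^B_f\eta$, just as it kills the $\langle\mathcal{L}_Z\omega,\eta\circ J\rangle$ term in the paper's version of the identity), and \eqref{k2} via Koiso's integrability relation $\partial_\alpha I_\beta^{\overline{\gamma}}=\partial_\beta I_\alpha^{\overline{\gamma}}$ combined with the Bismut Christoffel symbols of \Cref{P6}. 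Working with $I$ and transferring by $\nabla^B\omega=0$, rather than differentiating $\eta=g(J\cdot,I\cdot)$ directly as the paper does, is only a bookkeeping difference.
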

\begin{proof}
Fix a given point $p$ and choose coordinates constructed in \Cref{L1}. First, we note that 
\begin{align*}
    &\eta_{\alpha\beta}=\omega(\partial_\alpha,I\partial_{\beta})=\omega(\partial_\alpha,I_{\beta}^{\overline{\gamma}}\partial_{\overline{\gamma}})=\sqrt{-1}I_{\alpha\beta},
\\&\eta_{\alpha\overline{\beta}}=\omega(\partial_\alpha,I\partial_{\overline{\beta}})=\omega(\partial_\alpha,I_{\overline{\beta}}^\gamma\partial_\gamma)=0
\end{align*}
since $\omega\in \Lambda^{1,1}$. In other words, $\eta$ is anti-Hermitian. Next, we observe that 
\begin{align*}
    0&=\int_M \langle \mathcal{L}_ZJ,I \rangle e^{-f}dV_g
    \\&=\int_M \langle \divg_f\eta,Z \rangle+\langle \mathcal{L}_Z\omega, \eta\circ J \rangle e^{-f}dV_g.
\end{align*}
Since $\eta\circ J$ is symmetric, $\langle \mathcal{L}_Z\omega, \eta\circ J \rangle=0$. Then, $\divg_f\eta=\divg_f^B\eta=0$. Lastly, we compute
 \begin{align*}
     (\nabla^B_\alpha\eta)_{\beta\gamma}&=\partial_\alpha(\eta_{\beta\gamma})-\eta(\nabla^B_\alpha\partial_\beta,\partial_\gamma)-\eta(\partial_\beta,\nabla^B_\alpha\partial_\gamma)
     \\&=\partial_\alpha\big( g(J\partial_\gamma,I_\beta^{\overline{\tau}}\partial_{\overline{\tau}})  \big)-\frac{1}{2}H_{\alpha\beta\overline{\tau}}\eta_{\tau\gamma}-\frac{1}{2}H_{\alpha\gamma\overline{\tau}}\eta_{\beta\tau}
     \\&=g(J\partial_\gamma,(\partial_\alpha I_\beta^{\overline{\tau}})\partial_{\overline{\tau}})+g(J\partial_\gamma,I_\beta^{\overline{\tau}}\nabla^B_\alpha\partial_{\overline{\tau}})-\frac{1}{2}H_{\alpha\beta\overline{\tau}}\eta_{\tau\gamma}
     \\&=g(J\partial_\gamma,(\partial_\alpha I_\beta^{\overline{\tau}})\partial_{\overline{\tau}})+H_{\alpha\overline{\tau}\gamma}\eta_{\beta\tau}-\frac{1}{2}H_{\alpha\beta\overline{\tau}}\eta_{\tau\gamma}
 \end{align*}
 Thus,
 \begin{align*}
     (\nabla^B_\alpha\eta)_{\beta\gamma}-(\nabla^B_\beta\eta)_{\alpha\gamma}=H_{\alpha\overline{\tau}\gamma}\eta_{\beta\tau}-H_{\alpha\beta\overline{\tau}}\eta_{\tau\gamma}-H_{\beta\overline{\tau}\gamma}\eta_{\alpha\tau}.
 \end{align*}
\end{proof}

\subsection{Decomposition of Variation Space}

In this section, our goal is to study the second variation of the generalized Einstein--Hilbert functional on a pluriclosed steady soliton.  Let $(M,g,H,J,f)$ be a pluriclosed steady soliton with a closed 3-form $H=-d^c\omega$. Consider a family $(M,g_t,H_t,J_t,f_t)$ with 
\begin{align*}
    H_t=H_0+db_t,\quad (M,g_0,H_0,J_0,b_0)=(M,g,H,J,0)
\end{align*}
and $f_t$ is the minimizer of $\lambda(g_t,b_t)$. Denote 
\begin{align*}
   \frac{\partial }{\partial t}\big|_{t=0} g=h, \quad  \frac{\partial }{\partial t}\big|_{t=0} b=\beta,\quad   \frac{\partial }{\partial t}\big|_{t=0}\omega =\phi, \quad  \frac{\partial }{\partial t}\big|_{t=0} J =I,
\end{align*}
where $I$ is an $f$-essential infinitesimal variation of complex structure. We get the following proposition.

\begin{proposition}
For any general variation $\gamma\in\ker\overline{\divg}_f$, there exists a unique pair $(\xi,\eta)$ such that
\begin{align}
    \gamma(X,Y)=\xi(X,JY)+\eta(X,Y),  \label{k3}
\end{align}
where $\xi$ is a 2-form with $(d^B_f)^*\xi=0$ and $\eta$ is a symmetric 2-tensor with $\divg_f^B\eta=0$ and satisfies (\ref{k2}).
\end{proposition}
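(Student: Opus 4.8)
The plan is to read off $(\xi,\eta)$ from the pointwise type decomposition induced by $J$, and then to deduce the two gauge conditions from $\gamma\in\ker\overline{\divg}_f$ together with Lemma \ref{leta}. The algebraic input I would use is that for a real 2-form $\xi=\xi^{1,1}+\xi^{2,0+0,2}$ the tensor $\xi^{1,1}\circ J$ is symmetric and $J$-invariant (Hermitian) while $\xi^{2,0+0,2}\circ J$ is skew and $J$-anti-invariant, and that both assignments are isomorphisms onto the corresponding summands. Writing $\gamma=h-K$ with $h$ symmetric and $K=\dot b$ skew, I record that in the pluriclosed setting the $b$-field variation is of type $(2,0)+(0,2)$---already visible from the pluriclosed flow $\dot\beta=-\rho_B^{2,0}$---so that $K$ is $J$-anti-invariant and there is no $(1,1)$-obstruction. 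Setting $\eta$ to be the anti-Hermitian symmetric part of $h$ and $\xi:=-(\gamma-\eta)\circ J$, the symmetric part of $\gamma-\eta$ equals the Hermitian part $h^{\mathrm{herm}}$ and its skew part equals $-K\in\Lambda^{2,0+0,2}$, so the displayed algebra guarantees that $\xi$ is a genuine 2-form and that (\ref{k3}) holds. Since every choice was forced, $(\xi,\eta)$ is pointwise unique, which yields the uniqueness claim.

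For the conditions on $\eta$ I would identify it with the complex-structure variation. Differentiating the Hermitian relation $g_t(J_t\cdot,J_t\cdot)=g_t$ shows that the anti-Hermitian symmetric part of $h$ is the symmetric part of $\omega(\cdot,I\cdot)$ with $I=\dot J$, the skew $(2,0)+(0,2)$ remainder being absorbed into $\xi^{2,0+0,2}$; thus $\eta=\omega(\cdot,I\cdot)$ for the associated symmetric, anti-Hermitian $I$. As the derivative of genuine complex structures $I$ is integrable, and since $\lambda$ and the constraint $\gamma\in\ker\overline{\divg}_f$ are unchanged by subtracting a Lie derivative $\mathcal{L}_ZJ$, I may take $I$ to be $f$-essential. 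Lemma \ref{leta} then delivers simultaneously that $\eta$ is symmetric and anti-Hermitian, that $\divg^B_f\eta=0$, and that $\eta$ satisfies (\ref{k2}).

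The remaining condition $(d^B_f)^*\xi=0$ is the step I expect to be the main obstacle. I would first observe that, since $\eta$ is symmetric with $\divg_f\eta=\divg^B_f\eta=0$ by the previous step, Remark \ref{RRRRR} gives $\overline{\divg}_f\eta=(\divg_f\eta,\divg_f\eta)=0$, so that $\overline{\divg}_f(\xi\circ J)=\overline{\divg}_f\gamma=0$. Expanding $\xi\circ J=h^{\mathrm{herm}}-K$ through Remark \ref{RRRRR} converts this single vanishing into the two Levi-Civita identities $d^*_fK=0$ and $(\divg_f h^{\mathrm{herm}})_l=\tfrac{1}{2}H_{mkl}K_{mk}$. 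It then remains to recombine these into the single Bismut codifferential equation: because $\nabla^BJ=0$ and $\nabla^Bg=0$, the operator $\circ J$ commutes with $\nabla^B$, and converting $\nabla$ into $\nabla^B=\nabla^+$ produces precisely the $H$-torsion terms above, so that the $J$-invariant and $J$-anti-invariant parts of $(d^B_f)^*\xi$ reproduce the combinations $(\divg_f h^{\mathrm{herm}})_l-\tfrac{1}{2}H_{mkl}K_{mk}$ and $(d^*_fK)_l$ respectively. Verifying that these torsion corrections cancel exactly is the one genuinely computational point, and it is where the structural facts $\nabla^BJ=0$, $\nabla^Bg=0$ must be used decisively.
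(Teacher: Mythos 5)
Your proposal follows the same skeleton as the paper's proof: read $\eta$ off as the anti-Hermitian symmetric piece $\omega(\cdot,I\cdot)$ coming from differentiating the compatibility of $g_t$ with $J_t$, take $I$ to be $f$-essential using diffeomorphism invariance so that \Cref{leta} yields symmetry, anti-Hermitian type, $\divg^B_f\eta=0$ and (\ref{k2}); then use Remark \ref{RRRRR} to convert $\overline{\divg}_f(\xi\circ J)=0$ into the two Levi-Civita identities $d^*_fK=0$ and $(\divg_f h^{\mathrm{herm}})_l=\frac{1}{2}H_{mkl}K_{mk}$; and deduce uniqueness from the directness of the pointwise type decomposition. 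All of that is the paper's argument, up to relabelling.

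The genuine gap is how you dispose of the $(1,1)$-component of the $b$-field variation. As your own type algebra shows, $\circ J$ sends skew $(1,1)$-tensors to symmetric Hermitian ones, so if $K^{1,1}\neq 0$ then $\xi:=-(\gamma-\eta)\circ J$ acquires a symmetric part and is not a $2$-form, and (\ref{k3}) cannot hold at all: the image of $(\xi,\eta)\mapsto\xi\circ J+\eta$ contains no skew $(1,1)$-component. Your justification that $K=\dot b\in\Lambda^{2,0+0,2}$ --- ``already visible from the pluriclosed flow $\dot\beta=-\rho_B^{2,0}$'' --- is not valid, because the family $(g_t,b_t,J_t)$ in the second-variation problem is an arbitrary curve of pluriclosed structures, not a solution of the flow; the flow equation imposes no constraint on $\dot b$. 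The paper does not assume $K^{1,1}=0$. Instead it reshuffles the pair $(\phi,\beta)$ before defining $\xi$: it sets $\tilde\phi^{1,1}(X,Y)=\phi^{1,1}(X,Y)+\beta^{1,1}(X,JY)$, $\tilde\phi^{2,0+0,2}=0$, and $\tilde\beta^{2,0}(X,JY)=\phi^{2,0}(X,Y)+\beta^{2,0}(X,JY)$, $\tilde\beta^{1,1}=0$, which leaves the combination unchanged, $\phi(X,JY)-\beta(X,Y)=\tilde\phi(X,JY)-\tilde\beta(X,Y)$, and only then takes $\phi\in\Lambda^{1,1}$, $\beta\in\Lambda^{2,0+0,2}$ and puts $\xi=\phi+\beta\circ J$, $\eta=h-\phi\circ J$. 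This normalization of the pair --- absorbing the $(1,1)$-content of $\beta$ into the Hermitian slot and the $(2,0)+(0,2)$-content of $\phi$ into the $b$-field slot without touching $\gamma$ --- is exactly the device your argument is missing; without it, or some substitute gauge argument killing $K^{1,1}$, your construction of $\xi$ stalls at the first step. (One can debate how cleanly the paper's reshuffle itself handles the skew $(1,1)$-piece, since $\beta^{1,1}(X,JY)$ is symmetric in $(X,Y)$; but at least it is an internal redefinition of the variation, not an appeal to a flow the variation need not satisfy.)

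A smaller point: you leave the verification of $(d^B_f)^*\xi=0$ as an acknowledged computation. The paper is also brief there, but its normalized identities $\divg_f(\phi\circ J)_l=\frac{1}{2}\beta_{ij}H_{ijl}$ and $d^*_f\beta=0$, with $\xi=\phi+\beta\circ J$ and $(d^B_f)^*\xi_l=d^*_f\xi_l+\frac{1}{2}H_{mlk}\xi_{mk}$, are precisely what make the single Bismut equation come out; if you carry out your torsion-cancellation plan you should land on exactly those two identities, so this step is incomplete but not wrong in conception.
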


\begin{proof}
Following the notation written above,  we note that $g_t$ are compatible with $J_t$, so 
\begin{align*}
     \frac{\partial }{\partial t}\big|_{t=0} g(X,Y)&= \frac{\partial }{\partial t}\big|_{t=0}\big(\omega_t(X,J_tY) \big)
     \\&=\phi(X,JY)+\eta(X,Y), \quad \text{for all vector fields $X,Y$.}
\end{align*}
From the definition, a general variation $\gamma=h-\beta$ is given by
\begin{align*}
    \gamma(X,Y)= \phi(X,JY)+\eta(X,Y)-\beta(X,Y).
\end{align*}
We consider
\begin{align*}
    &\Tilde{\phi}^{1,1}(X,Y)=\phi^{1,1}(X,Y)+\beta^{1,1}(X,JY), \quad \Tilde{\phi}^{2,0}= \Tilde{\phi}^{0,2}=0,\quad 
    \\&\Tilde{\beta}^{2,0}(X,JY)=\phi^{2,0}(X,Y)+\beta^{2,0}(X,JY), \quad \Tilde{\beta}^{0,2}=\overline{\Tilde{\beta}^{2,0}},\quad \Tilde{\beta}^{1,1}=0.
\end{align*}
Since 
\begin{align*}
    \phi(X,JY)-\beta(X,Y)=\Tilde{\phi}(X,JY)-\Tilde{\beta}(X,Y),
\end{align*}    
in the following, we may assume $\phi\in \Lambda^{1,1}$ and $\beta\in \Lambda^{2,0+0,2}$. Therefore, $\eta$ is symmetric and $\divg_f^B\eta=0$ by \Cref{leta}. Note that $\gamma\in\ker\overline{\divg}_f$,
\begin{align*}
    \divg_f(\phi\circ J)_l=\frac{1}{2}\beta_{ij}H_{ijl},\quad d^*_f\beta=0,
\end{align*}
by \Cref{RRRRR}. Define $\xi=\phi+\beta\circ J$, we deduce that $(d^B_f)^*\xi=0$. Besides, $\eta$ is symmetric and anti-Hermitian. Therefore, $\xi$ and $\eta$ are orthogonal in the sense of twisted $L^2$ inner product (\ref{6}). In other words, the pair $(\xi,\eta)$ is unique.

\end{proof}

In the following, we assume that the general variation $\gamma\in \ker\overline{\divg}_f$. The conclusion in section 2.3 deduces that the second variation formula is given by 
\begin{align*}
     \nonumber\frac{d^2}{dt^2}\Big|_{t=0}\lambda=\int_M\langle \gamma, \overline{\mathcal{L}}_f(\gamma) \rangle e^{-f}dV_g.
\end{align*} 
By (\ref{bar4}) and (\ref{LB}), we have
\begin{align}
    \nonumber \overline{\mathcal{L}}_f(\gamma)_{ij}&=\frac{1}{2}\overline{\triangle}_f\gamma_{ij}+\mathring{R}^B(\gamma)_{ij}
    \\&=\frac{1}{2}\triangle_f^B\gamma_{ij}+\mathring{R}^B(\gamma)_{ij}-\nabla^B_m\gamma_{kj}H_{mki}-\frac{1}{2}H^2_{ki}\gamma_{kj}.
\end{align}
use the decomposition (\ref{k3}), it suffices to compute the second variation in three parts
\begin{align*}
     \int_M \langle \overline{\mathcal{L}}_f(\xi\circ J),\eta  \rangle e^{-f}dV_g, \quad  \int_M \langle \overline{\mathcal{L}}_f(\xi\circ J),\xi\circ J  \rangle e^{-f}dV_g,\quad  \int_M \langle \overline{\mathcal{L}}_f\eta,\eta  \rangle e^{-f}dV_g.   
\end{align*}
We first compute the cross term.

\begin{lemma}\label{L6}
Suppose $\xi$ is a 2-form and $\eta$ is anti-Hermitian symmetric 2-tensor with $\divg^B_f\eta=0$ on a complex manifold $(M,g,J)$. Then,
\begin{align*}
    \int_M \langle \overline{\mathcal{L}}_f(\xi\circ J),\eta  \rangle e^{-f}dV_g =\int_M \langle \overline{\mathcal{L}}_f(\eta),\xi\circ J \rangle e^{-f}dV_g=\int_M \frac{1}{6} \langle d\xi, C \rangle e^{-f}dV_g,
\end{align*}
where $C$ is a three form defined by 
\begin{align}
    C_{ijk}=H_{ijl}(\eta\circ J)_{lk}+H_{jkl}(\eta\circ J)_{li}+H_{kil}(\eta\circ J)_{lj}. \label{C}
\end{align}
\end{lemma}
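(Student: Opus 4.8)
The plan is to deduce the first equality from the formal self-adjointness of $\overline{\mathcal{L}}_f$, and then to prove the second by isolating the genuinely type-mixing part of the operator. For the first equality, write $\overline{\mathcal{L}}_f=\tfrac12\overline{\triangle}_f+\mathring{R}^B$. The term $\tfrac12\overline{\triangle}_f=-\tfrac12\overline{\nabla}^{*_f}\overline{\nabla}$ is self-adjoint with respect to the $f$-twisted inner product by construction, and the algebraic term is self-adjoint because the pairing $\langle\mathring{R}^B(\gamma),\delta\rangle=R^B_{iklj}\gamma_{kl}\delta_{ij}$ is symmetric in $\gamma,\delta$: after relabelling dummy indices, symmetry amounts to the identity $R^B_{iklj}=R^B_{kijl}$, which holds at once from the antisymmetry of $R^B$ in its first and last index pairs (metric compatibility of $\nabla^B$). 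Hence $\int_M\langle\overline{\mathcal{L}}_f(\xi\circ J),\eta\rangle e^{-f}dV_g=\int_M\langle\xi\circ J,\overline{\mathcal{L}}_f(\eta)\rangle e^{-f}dV_g$, which is the first equality; note this step needs neither the soliton equations nor membership of $\xi\circ J,\eta$ in $\ker\overline{\divg}_f$.

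For the second equality I would compute $\int_M\langle\overline{\mathcal{L}}_f(\eta),\xi\circ J\rangle e^{-f}dV_g$ using the $\triangle^B_f$-form of the operator given by \eqref{bar4} and \eqref{LB},
\[
\overline{\mathcal{L}}_f(\eta)_{ij}=\tfrac12\triangle^B_f\eta_{ij}+\mathring{R}^B(\eta)_{ij}-H_{mki}\nabla^B_m\eta_{kj}-\tfrac12H^2_{ki}\eta_{kj}.
\]
The structural key is that $\nabla^B J=0$, so $\nabla^B$, and hence $\triangle^B_f$, preserves the class of symmetric anti-Hermitian tensors, and $\mathring{R}^B$ preserves it as well, using that the Bismut curvature is $J$-invariant and enjoys the pair symmetry valid for $dH=0$ (cf.\ \Cref{P3} and \eqref{10}). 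Therefore $\tfrac12\triangle^B_f\eta+\mathring{R}^B(\eta)$ is again symmetric and anti-Hermitian. On the other hand $\xi\circ J$ splits pointwise into a $J$-Hermitian symmetric part $\xi^{1,1}\circ J$ and a skew part $\xi^{2,0+0,2}\circ J$, and any symmetric anti-Hermitian tensor is pointwise orthogonal to both. Consequently the first two terms drop out and
\[
\int_M\langle\overline{\mathcal{L}}_f(\eta),\xi\circ J\rangle e^{-f}dV_g=\int_M\big(-H_{mki}\nabla^B_m\eta_{kj}-\tfrac12H^2_{ki}\eta_{kj}\big)(\xi\circ J)_{ij}\,e^{-f}dV_g,
\]
so that only the explicitly torsion-driven terms survive.

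It then remains to evaluate this first-order expression, and here I would pass to the special complex coordinates of \Cref{P6}, in which $\eta$ has only $\eta_{\alpha\beta}$ and $\eta_{\overline\alpha\overline\beta}$ components, $(\eta\circ J)_{\alpha\beta}=\sqrt{-1}\eta_{\alpha\beta}$, and the metric contraction pairs holomorphic with antiholomorphic indices. I would integrate by parts the derivative term, moving $\nabla^B_m$ off $\eta$ onto $H_{mki}(\xi\circ J)_{ij}$. This produces three pieces: an antisymmetrized Bismut derivative of $\xi\circ J$ contracted against the totally skew $H_{mki}$, which reassembles into the exterior derivative $d\xi$; a term in the Bismut divergence $\nabla^B_m H_{mki}$, to be rewritten with the Bianchi identity \eqref{10}; and an $f$-term $\int_M H_{mki}\nabla_m f\,\eta_{kj}(\xi\circ J)_{ij}e^{-f}dV_g$. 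Using $\divg^B_f\eta=0$, the Codazzi-type relation \eqref{k2}, and the Lee-form identities \eqref{28}, I expect the $\nabla^B H$, the $H^2$, and the $f$-contributions to cancel, leaving precisely the first-order term, which the $\sqrt{-1}$-bookkeeping converts into $\tfrac16\langle d\xi,C\rangle$ with $C$ exactly the cyclic tensor \eqref{C}.

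The main obstacle is this final cancellation together with the exact matching of constants and index structure: one must show that the curvature-free torsion terms combine, via \eqref{10}, \eqref{k2}, and $\divg^B_f\eta=0$, into the clean cyclic contraction $C_{ijk}=H_{ijl}(\eta\circ J)_{lk}+H_{jkl}(\eta\circ J)_{li}+H_{kil}(\eta\circ J)_{lj}$ rather than some other contraction of $H$ with $\eta\circ J$, and to produce exactly the coefficient $\tfrac16$. A secondary technical point to pin down is the type-preservation of $\mathring{R}^B$ on symmetric anti-Hermitian tensors, which rests on the full symmetries of the Bismut curvature in the closed-torsion case.
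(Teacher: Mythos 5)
Your first equality is fine: $\overline{\triangle}_f$ is self-adjoint by construction, and the algebraic pairing $R^B_{iklj}\gamma_{kl}\delta_{ij}$ is symmetric in $\gamma,\delta$ using only the antisymmetry of $R^B$ in its first and last index pairs, exactly as you argue. The gap is in your treatment of the second equality. You claim that $\mathring{R}^B$ preserves the class of symmetric anti-Hermitian tensors, so that $\tfrac12\triangle^B_f\eta+\mathring{R}^B(\eta)$ is pointwise orthogonal to $\xi\circ J$ and only the explicit torsion terms survive. For the Laplacian this is correct, since $\nabla^B$ preserves symmetry and commutes with $J$; for the curvature it is false. The Bismut curvature does \emph{not} enjoy the pair symmetry $R^B_{ijkl}=R^B_{klij}$ when $dH=0$; what holds instead is $R^+_{ijkl}=R^-_{klij}$, where $R^-$ is the curvature of the connection with torsion $-H$. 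Consequently, for symmetric $\eta$,
\begin{align*}
\mathring{R}^B(\eta)_{ij}-\mathring{R}^B(\eta)_{ji}
=\big(R^+_{iklj}-R^-_{iklj}\big)\eta_{kl}
=\big(\nabla_iH_{klj}-\nabla_kH_{ilj}\big)\eta_{kl},
\end{align*}
a genuine $\nabla H$ term which vanishes only when the torsion is parallel. So $\mathring{R}^B(\eta)$ has a nontrivial skew part, and its pairing with the skew part of $\xi\circ J$ does not drop out; what you dismiss at the end as a ``secondary technical point'' is in fact where the content of the lemma lies.

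This is not a repairable bookkeeping issue within your scheme: the dropped term is exactly the source of the answer. Indeed, by the self-adjointness you established, $\int_M\langle\mathring{R}^B(\eta),\xi\circ J\rangle e^{-f}dV_g=-\int_M\langle\mathring{R}^B(\xi),\eta\circ J\rangle e^{-f}dV_g$, and the paper's proof evaluates the latter via the algebraic Bianchi identity (\ref{10}) (the quadratic torsion terms cancel by skew/symmetric index symmetries) as $\tfrac12\int_M(\nabla_jH)_{ikl}\xi_{kl}(\eta\circ J)_{ij}e^{-f}dV_g$. It is precisely this $\nabla H$ contribution which, after a single integration by parts against the first-order term $-H_{mki}\nabla^B_m\xi_{kj}$ (using $\divg_f(\eta\circ J)=0$ and $dH=0$), reassembles into $\tfrac16\langle d\xi,C\rangle$ with the cyclic tensor $C$ and the coefficient $\tfrac16$. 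Omitting it, the torsion terms you retain cannot integrate to the stated formula, so the ``final cancellation'' you defer will not close. The fix is essentially the paper's route: keep the curvature term, convert it to $\nabla H$ via (\ref{10}) (noting the $\triangle^B_f$ term genuinely vanishes by symmetric/antisymmetric orthogonality), and then integrate by parts.
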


\begin{proof}
We compute 
\begin{align*}
    \int_M \langle \overline{\mathcal{L}}_f(\xi\circ J),\eta  \rangle e^{-f}dV_g &=\int_M \Big( \frac{1}{2}\triangle_f^B (\xi\circ J)_{ij}\eta_{ij}+\mathring{R}^B(\xi\circ J)_{ij}\eta_{ij}-\nabla^B_m(\xi\circ J)_{kj}H_{mki}\eta_{ij}-\frac{1}{2}H^2_{ki}(\xi\circ J)_{kj}\eta_{ij}\Big)  e^{-f}dV_g
    \\&=-\int_M \Big( \frac{1}{2}\triangle_f^B\xi_{ij}(\eta\circ J)_{ij}+\mathring{R}^B(\xi)_{ij}(\eta\circ J)_{ij}-\nabla^B_m\xi_{kj}H_{mki}(\eta\circ J)_{ij}-\frac{1}{2}H^2_{ki}\xi_{kj}(\eta\circ J)_{ij} \Big) e^{-f}dV_g.
\end{align*}
Using the Bianchi identity (\ref{10}), 
\begin{align*}
    R^B_{iklj}+R^B_{klij}+R^B_{likj}=(\nabla_j^BH)_{ikl}-H_{ikm}H_{ljm}-H_{klm}H_{ijm}-H_{lim}H_{kjm}.
\end{align*}
We compute
\begin{align*}
    \langle \mathring{R}^B(\xi),\eta\circ J  \rangle&= R^B_{iklj}\xi_{kl}(\eta\circ J)_{ij}
    \\&=\big(-R^B_{klij}-R^B_{likj}+(\nabla_j^BH)_{ikl}-H_{ikm}H_{ljm}-H_{klm}H_{ijm}-H_{lim}H_{kjm}   \big)\xi_{kl}(\eta\circ J)_{ij}
    \\&=-R^B_{iklj}\xi_{kl}(\eta\circ J)_{ij}+(\nabla_j^BH)_{ikl}\xi_{kl}(\eta\circ J)_{ij}
    \\&=-\langle \mathring{R}^B(\xi),\eta\circ J  \rangle+(\nabla_j H)_{ikl}\xi_{kl}(\eta\circ J)_{ij},
\end{align*}
here we notice that $\eta\circ J$ is symmetric since $\eta$ is anti-Hermitian and symmetric. Next, we compute
\begin{align*}
    -\nabla^B_m\xi_{kj}H_{mki}(\eta\circ J)_{ij}=-\nabla_m\xi_{kj}H_{mki}(\eta\circ J)_{ij}+\frac{1}{2}H_{il}^2\xi_{lj}(\eta\circ J)_{ij}.
\end{align*}
We conclude that 
\begin{align*}
    \int_M \langle \overline{\mathcal{L}}_f(\xi\circ J),\eta \rangle e^{-f}dV_g &=-\int_M\Big( \langle \mathring{R}^B(\xi),\eta\circ J  \rangle-\nabla^B_m\xi_{kj}H_{mki}(\eta\circ J)_{ij}-\frac{1}{2}H^2_{ki}\xi_{kj}(\eta\circ J)_{ij}\Big)  e^{-f}dV_g
    \\&=-\int_M \Big( \frac{1}{2}(\nabla_j H)_{ikl}\xi_{kl}(\eta\circ J)_{ij}-\nabla_m\xi_{kj}H_{mki}(\eta\circ J)_{ij}\Big)  e^{-f}dV_g
    \\&= \int_M (\nabla_k\xi_{lj}+\frac{1}{2}\nabla_j\xi_{kl})H_{ikl}(\eta\circ J)_{ij}  e^{-f}dV_g
    \\&=\int_M \Big( \frac{1}{2}(d\xi)_{klj}H_{ikl}(\eta\circ J)_{ij} \Big) e^{-f}dV_g
    \\&=\int_M \frac{1}{6} \langle d\xi, C \rangle e^{-f}dV_g,
\end{align*}
since $\divg^B_f(\eta\circ J)=\divg_f(\eta\circ J)=0$.

\end{proof}

\subsection{Anti-symmetric Part}

In this section, we focus on the anti-symmetric part $\xi$. First, we observe that 
\begin{align*}
    \int_M\langle& \xi\circ J, \overline{\mathcal{L}}_f(\xi\circ J) \rangle e^{-f}dV_g
    \\&=\int_M \Big( \frac{1}{2}\triangle_f^B (\xi\circ J)_{ij}(\xi\circ J)_{ij}+\mathring{R}^B(\xi\circ J)_{ij}(\xi\circ J)_{ij}-\nabla^B_m(\xi\circ J)_{kj}H_{mki}(\xi\circ J)_{ij}-\frac{1}{2}H^2_{ki}(\xi\circ J)_{kj}(\xi\circ J)_{ij}\Big)  e^{-f}dV_g
    \\&=\int_M \Big( \frac{1}{2}\triangle_f^B\xi_{ij}\xi_{ij}+\mathring{R}^B(\xi)_{ij}\xi_{ij}-\nabla^B_m\xi_{kj}H_{mki}\xi_{ij}-\frac{1}{2}H^2_{ki}\xi_{kj}\xi_{ij}\Big)  e^{-f}dV_g.
\end{align*}
Next, we compute the Hodge Laplacian of Bismut connection $\triangle^B_{d,f}=-d^B (d^B_f)^*-(d^B_f)^*d^B$.

\begin{lemma}\label{LD}
 Suppose $(M,g,H,f)$ is a steady gradient generalized Ricci soliton. The Hodge Laplacian of Bismut connection $\triangle^B_{d,f}=-d^B (d^B_f)^*-(d^B_f)^*d^B$ on 2-form $\xi$ is given by
 \begin{align}
     \triangle^B_{d,f}\xi_{ij}=\triangle^B_f\xi_{ij}+\mathring{R}^B(\xi)_{ij}-\mathring{R}^B(\xi)_{ji}-H_{lik}\nabla^B_k\xi_{jl}-H_{ljk}\nabla^B_k\xi_{li}.
 \end{align}
\end{lemma}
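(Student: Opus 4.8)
The plan is to establish this as a Bochner--Weitzenböck identity for the $f$-twisted Bismut Hodge Laplacian on $2$-forms, and then to use the soliton equation to remove the Ricci-- and Hessian--type terms. First I would write both first-order operators purely in terms of the Bismut connection: for a $2$-form $\xi$ one has the antisymmetrization $(d^B\xi)_{ijk}=\nabla^B_i\xi_{jk}+\nabla^B_j\xi_{ki}+\nabla^B_k\xi_{ij}$, and its $f$-twisted adjoint $(d^B_f)^*$ is, up to the usual sign, the contraction $-(\nabla^B)^k\xi_{kj}$ corrected by the zeroth-order term $(\nabla f)^k\xi_{kj}$ coming from integrating the weight $e^{-f}dV_g$ by parts. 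I would also record the rough Laplacian in the form $\triangle^B_f\xi_{ij}=(\nabla^B)^k\nabla^B_k\xi_{ij}-(\nabla f)^k\nabla^B_k\xi_{ij}$, which is the index expression of $-(\nabla^B)^{*_f}\nabla^B$.

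Next I would compose the two operators. Expanding $-(d^B_f)^*d^B\xi$ and $-d^B(d^B_f)^*\xi$ produces, in each case, second covariant derivatives of $\xi$ together with first-order terms in which a $\nabla f$ gets differentiated. Grouping the second-derivative contributions, the combination $(\nabla^B)^k\nabla^B_k\xi_{ij}$ reassembles into $\triangle^B_f$, while what remains are ``crossed'' second derivatives of the form $(\nabla^B)^k\nabla^B_i\xi_{kj}$ and its $j$-analogue. Simultaneously, the terms where a derivative falls on $\nabla f$ assemble into the Bismut Hessian $(\nabla^B)^2 f$ contracted against $\xi$.

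The third step is to commute the crossed derivatives using the commutation formula (\ref{cf}) for $\nabla^+=\nabla^B$. Each commutator contributes (i) a torsion term $-H_{\bullet\bullet m}(\nabla^B)^m$, which after contraction against $\xi$ produces precisely the terms $-H_{lik}\nabla^B_k\xi_{jl}$ and $-H_{ljk}\nabla^B_k\xi_{li}$, and (ii) curvature terms $-R^B_{\,\bullet\bullet\bullet m}\xi$, whose part contracted on the free indices gives the full-Riemann operator $\mathring{R}^B(\xi)_{ij}-\mathring{R}^B(\xi)_{ji}$ and whose traced part gives Bismut--Ricci contractions of the type $\Rc^B_{\bullet\bullet}\xi$.

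Finally I would invoke the generalized soliton equation in the form $\Rc^B=-(\nabla^B)^2f$ from Definition \ref{soliton}: the Bismut--Ricci contractions produced by the commutators cancel exactly against the Bismut Hessian terms $(\nabla^B)^2 f$ coming from the twisting, leaving only the full-Riemann operator and the two torsion terms, which is the asserted identity. The main obstacle is the bookkeeping in this last cancellation: the Bismut Ricci tensor $\Rc^B$ is \emph{not} symmetric (it carries the antisymmetric piece $-\tfrac12 d^*H$), and $(\nabla^B)^2 f$ is correspondingly non-symmetric, so one must track carefully which index of $\xi$ each contraction acts on and match the symmetric and antisymmetric parts slot-by-slot under $\Rc^B+(\nabla^B)^2f=0$. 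Keeping the signs straight on the torsion terms --- which enter through the antisymmetrized $d^B$, through the commutator (\ref{cf}), and through the $f$-twisted adjoint --- is the other place where care is required.
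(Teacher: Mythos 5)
Your proposal follows essentially the same route as the paper's proof: expand $-(d^B_f)^*d^B\xi$ in terms of $\nabla^B$ (recovering $\triangle^B_f\xi$ plus crossed second derivatives and $d^B(d^B_f)^*\xi$), commute the crossed derivatives via (\ref{cf}) to produce the torsion terms $-H_{lik}\nabla^B_k\xi_{jl}$, $-H_{ljk}\nabla^B_k\xi_{li}$ together with the curvature operator $\mathring{R}^B(\xi)_{ij}-\mathring{R}^B(\xi)_{ji}$ and Bismut--Ricci contractions, then cancel the latter against the Bismut Hessian terms using $\Rc^B=-(\nabla^B)^2f$. This is correct and matches the paper's argument, including your caveat that the non-symmetric $\Rc^B$ and $(\nabla^B)^2f$ must be matched slot-by-slot.
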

\begin{proof}
We compute
\begin{align*}
    -(d^B_f)^* d^B\xi_{ij}&=\big(\nabla^B_l(d^B\xi)\big)(e_l,e_i,e_j)-\nabla_lf (d^B\xi)_{lij}
    \\&=\partial_l\big( (d^B\xi)_{lij} \big)-(d^B\xi)(e_l,\nabla^B_le_i,e_j)-(d^B\xi)(e_l,e_i\nabla^B_le_j)-\nabla_lf (d^B\xi)_{lij}
    \\&=\partial_l\big((\nabla^B_l\xi)_{ij}+(\nabla^B_i\xi)_{jl}+(\nabla^B_j\xi)_{li}\big)-\frac{1}{2}H_{lik}(d^B\xi)_{lkj}-\frac{1}{2}H_{ljk}(d^B\xi)_{lik}-\nabla_lf\big((\nabla^B_l\xi)_{ij}+(\nabla^B_i\xi)_{jl}+(\nabla^B_j\xi)_{li}\big)
    \\&=\triangle^B_f\xi_{ij}+(\nabla^B_l\nabla^B_i\xi)_{jl}+(\nabla^B_l\nabla^B_j\xi)_{li}-\nabla_lf\big((\nabla^B_i\xi)_{jl}+(\nabla^B_j\xi)_{li}\big).
\end{align*}
Using the commutator formula (\ref{cf}), 
\begin{align*}
    (\nabla^B_l\nabla^B_i\xi)_{jl}&=(\nabla^B_i\nabla^B_l\xi)_{jl}-H_{lik}(\nabla^B_k\xi)_{jl}-R^B_{lijk}\xi_{kl}-R^B_{lilk}\xi_{jk}
    \\&=\partial_i\big( (\nabla^B_l\xi)_{jl} \big)+H_{ilk}(\nabla^B_k\xi)_{jl}-\frac{1}{2}H_{ijk}(\nabla^B_l\xi)_{kl}+\mathring{R}^B(\xi)_{ij}+R^B_{ik}\xi_{jk}
    \\&=\partial_i\big( ((d^B_f)^*\xi)_j-\nabla_lf\xi_{lj}  \big)+H_{ilk}(\nabla^B_k\xi)_{jl}-\frac{1}{2}H_{ijk}\big( ((d^B_f)^*\xi)_k-\nabla_lf\xi_{lk}  \big)+\mathring{R}^B(\xi)_{ij}+R^B_{ik}\xi_{jk}
    \\&=\nabla_i^B((d^B_f)^*\xi)_j-\nabla_i^B\nabla_l^Bf \xi_{lj}-\frac{1}{2}H_{ilk}\nabla_kf \xi_{lj}-\nabla_l f \partial_i(\xi_{lj})+\frac{1}{2}H_{ijk}\nabla_lf \xi_{lk}+H_{ilk}(\nabla^B_k\xi)_{jl}+\mathring{R}^B(\xi)_{ij}+R^B_{ik}\xi_{jk}
    \\&=\nabla_i^B(d^B_f)^*\xi)_j+\mathring{R}^B(\xi)_{ij}+H_{ilk}(\nabla^B_k\xi)_{jl}-\nabla_lf (\nabla^B_i\xi)_{lj},
\end{align*}
where we used the assumption (\ref{s}) that $\Rc^B=-(\nabla^B)^2 f$ Therefore,
\begin{align*}
    -(d^B_f)^*d^B\xi_{ij}&=\triangle^B_f\xi_{ij}+d^B (d^B_f)^*\xi_{ij}+\mathring{R}^B(\xi)_{ij}-\mathring{R}^B(\xi)_{ji}+H_{ilk}(\nabla^B_k\xi)_{jl}-H_{jlk}(\nabla^B_k\xi)_{il},
\end{align*}
which finish the proof.

\end{proof}

Now, we can state the following proposition.
\begin{proposition}
Let $(M,g,H,J,f)$ be a compact pluriclosed steady soliton and $\xi$ is a 2-form with $(d^B_f)^*\xi=0$. Then,      
\begin{align}
    \int_M\langle \xi\circ J, \overline{\mathcal{L}}_f(\xi\circ J) \rangle e^{-f}dV_g=-2\|d_f^*\xi\|_f^2-\frac{1}{6}\|d\xi\|_f^2,
\end{align}
where  $\|\cdot\|_f$ denotes the norm of $f$-twisted $L^2$ inner product (\ref{6}).
\end{proposition}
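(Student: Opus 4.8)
The plan is to pass from the Bismut \emph{Bochner} Laplacian to the Bismut \emph{Hodge} Laplacian via \Cref{LD}, and then to trade every Bismut-connection quantity for its Levi-Civita counterpart, letting the hypothesis $(d^B_f)^*\xi=0$ do the essential work. Starting from the integrand already computed just above the statement,
\[
\int_M\Big(\tfrac12\triangle^B_f\xi_{ij}\,\xi_{ij}+\mathring R^B(\xi)_{ij}\,\xi_{ij}-\nabla^B_m\xi_{kj}\,H_{mki}\,\xi_{ij}-\tfrac12 H^2_{ki}\,\xi_{kj}\,\xi_{ij}\Big)e^{-f}dV_g ,
\]
I would substitute \Cref{LD} to replace $\triangle^B_f\xi$ by $\triangle^B_{d,f}\xi$. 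This produces two extra curvature terms $-\tfrac12\mathring R^B(\xi)_{ij}+\tfrac12\mathring R^B(\xi)_{ji}$, which together with the existing $\mathring R^B(\xi)_{ij}\xi_{ij}$ give a total curvature contribution $\tfrac12\big(\mathring R^B(\xi)_{ij}+\mathring R^B(\xi)_{ji}\big)\xi_{ij}$; since this bracket is symmetric in $ij$ while $\xi$ is skew, the curvature cancels entirely. What survives is $\tfrac12\triangle^B_{d,f}\xi_{ij}\xi_{ij}$, three first-order torsion terms of the schematic form $H\,\nabla^B\xi\,\xi$, and the zeroth-order term $-\tfrac12H^2_{ki}\xi_{kj}\xi_{ij}$.

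Next I would integrate the Hodge-Laplacian term by parts. Because the inner products here are full tensor contractions, the adjointness of $d^B$ and $(d^B_f)^*$ carries combinatorial factors ($3$ for $2$-forms $\to 3$-forms, $2$ for $1$-forms $\to 2$-forms), so that $\int\tfrac12\langle\triangle^B_{d,f}\xi,\xi\rangle_f=-\tfrac16\|d^B\xi\|_f^2-\|(d^B_f)^*\xi\|_f^2$; the hypothesis $(d^B_f)^*\xi=0$ removes the last term. I would then convert $d^B$ to $d$: expanding $\nabla^B=\nabla+\tfrac12g^{-1}H$ and using the antisymmetries of $H$ and $\xi$, the cyclic sum collapses to $(d^B\xi)_{ijk}=(d\xi)_{ijk}-\widehat C_{ijk}$, where $\widehat C$ is the $\xi$-analogue of the three-form $C$ in (\ref{C}). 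Expanding $\|d^B\xi\|_f^2=\|d\xi\|_f^2-2\langle d\xi,\widehat C\rangle_f+\|\widehat C\|_f^2$ yields the target term $-\tfrac16\|d\xi\|_f^2$ and sends the remaining pieces into a pool of $\nabla\xi\,H\,\xi$ and $H^2\xi\xi$ contractions.

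For the codifferential, the same splitting gives $(d^B_f)^*\xi=d^*_f\xi+D$ with $D_j=\tfrac12H_{mjl}\xi_{ml}$ (the trace term $H_{mml}$ vanishes). Thus $(d^B_f)^*\xi=0$ forces $d^*_f\xi=-D$, so $\|d^*_f\xi\|_f^2=\|D\|_f^2$ is a pure $H^2\xi\xi$ quantity. The claimed $-2\|d^*_f\xi\|_f^2$ must therefore be recovered from exactly the leftover first-order torsion terms and the $H^2$-term, together with the $\langle d\xi,\widehat C\rangle_f$ and $\|\widehat C\|_f^2$ contributions. I would integrate the first-order terms by parts once more; this produces $\nabla^B H$ contractions, which I would eliminate using the Bianchi identity (\ref{10}), along with further $H^2\xi\xi$ terms, invoking the soliton relation (\ref{s}) (equivalently $\Rc^B=-(\nabla^B)^2f$) so that the differentiations of the $e^{-f}$ weight close up.

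The main obstacle is this final bookkeeping: proving that the entire collection of $H^2\xi\xi$ and $\nabla^B H\,\xi\,\xi$ terms collapses to precisely $-2\|D\|_f^2=-2\|d^*_f\xi\|_f^2$, with no residual $\|d\xi\|_f^2$ contribution, so that the coefficient of $\|d\xi\|_f^2$ remains exactly $-\tfrac16$. This demands careful use of the antisymmetry of both $H$ and $\xi$, the pluriclosed condition $dH=0$, and accurate tracking of the full-contraction normalization constants; the most error-prone step is reconciling the several distinct $HH\xi\xi$ contraction patterns against the single pattern that appears in $\|D\|^2$.
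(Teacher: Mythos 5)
Your plan reproduces the paper's own proof essentially step for step: pass to the Hodge Laplacian via Lemma \ref{LD} (with the curvature contribution cancelling against $\mathring R^B(\xi)$ by skew-symmetry of $\xi$), integrate by parts to get $-\tfrac16\|d^B\xi\|_f^2$ with the correct full-contraction constants, expand $d^B\xi=d\xi-\widehat C$ and $(d^B_f)^*\xi=d^*_f\xi+\tfrac12 H_{m\cdot k}\xi_{mk}$, and collapse the remaining torsion terms using $dH=0$ and the constraint. The final bookkeeping you flag as the main obstacle is exactly what the paper carries out (converting $\nabla^B$ back to $\nabla$, one more integration by parts producing the $\nabla H$ term killed by $dH=0$ and the $d^*_f\xi$ term giving $-2\|d^*_f\xi\|_f^2$); note only $dH=0$ is needed there, not the Bianchi identity (\ref{10}), since the soliton equation enters solely through Lemma \ref{LD}.
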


\begin{proof}
Using the \Cref{LD}, 
\begin{align*}
    \frac{1}{2}\langle \triangle^B_{d,f}\xi,\xi  \rangle=\frac{1}{2}\langle \triangle^B_{f}\xi,\xi  \rangle+\langle \mathring{R}^B(\xi),\xi \rangle+H_{ilk}(\nabla^B_k\xi)_{jl}\xi_{ij}.
\end{align*}
Therefore, 
\begin{align*}
     \int_M\langle\xi\circ J, \overline{\mathcal{L}}_f(\xi\circ J) \rangle e^{-f}dV_g&=\int_M \Big(  \frac{1}{2}\langle \triangle^B_{d,f}\xi,\xi  \rangle-2\nabla^B_m\xi_{kj}H_{mki}\xi_{ij}-\frac{1}{2}H^2_{ki}\xi_{kj}\xi_{ij}\Big)  e^{-f}dV_g
     \\&=\int_M \Big( -\frac{1}{6}|d^B\xi|^2-2\nabla^B_m\xi_{kj}H_{mki}\xi_{ij}-\frac{1}{2}H^2_{ki}\xi_{kj}\xi_{ij}\Big)  e^{-f}dV_g.
\end{align*}
where we used the convention that 
\begin{align*}
    \int_M \langle \triangle^B_{d,f}\xi,\xi  \rangle e^{-f}dV_g&= \int_M -\langle(d^B)^* d^B\xi,\xi \rangle e^{-f}dV_g
    \\&=\int_M (\nabla^B_{l}d^B\xi)_{lij}\xi_{ij}e^{-f}dV_g
    \\&=\int_M -(d^B\xi)_{lij}\nabla^B_{l}\xi_{ij}e^{-f}dV_g
    \\&=-\frac{1}{3}\|d^B\xi\|^2_f
\end{align*}

Note that 
\begin{align*}
    (d^B\xi)_{mij}=(d\xi)_{mij}-H_{mik}\xi_{kj}-H_{ijk}\xi_{km}-H_{jmk}\xi_{ki},
\end{align*}
so 
\begin{align*}
    |d^B\xi|^2=|d\xi|^2-6(d\xi)_{mij}H_{mik}\xi_{kj}+3H^2_{kl}\xi_{lj}\xi_{kj}-6H_{iab}H_{jcb}\xi_{ij}\xi_{ac}.
\end{align*}
Then, 
\begin{align*}
     \int_M\langle\xi\circ J, \overline{\mathcal{L}}_f(\xi\circ J) \rangle e^{-f}dV_g&=\int_M \Big((-\frac{1}{6}|d^B\xi|^2-2\nabla^B_m\xi_{kj}H_{mki}\xi_{ij}-\frac{1}{2}H^2_{ki}\xi_{kj}\xi_{ij} \Big) e^{-f}dV_g
     \\&=\int_M\Big(-\frac{1}{6}|d\xi|^2+(d\xi)_{mij}H_{mik}\xi_{kj}-2\nabla^B_m\xi_{kj}H_{mki}\xi_{ij}-H^2_{ki}\xi_{kj}\xi_{ij}+H_{iab}H_{jcb}\xi_{ij}\xi_{ac}\Big)  e^{-f}dV_g
     \\&=\int_M \Big(-\frac{1}{6}|d\xi|^2+(d\xi)_{mij}H_{mik}\xi_{kj} -2\nabla_m\xi_{kj}H_{mki}\xi_{ij} \Big) e^{-f}dV_g
     \\&=\int_M \Big(-\frac{1}{6}|d\xi|^2+ \nabla_j\xi_{mk}H_{mki}\xi_{ij}\Big)  e^{-f}dV_g
     \\&=\int_M \Big(-\frac{1}{6}|d\xi|^2-\nabla_jH_{mki}\xi_{mk}\xi_{ij}-(d_f^*\xi)_i H_{mki}\xi_{mk}\Big) e^{-f}dV_g
     \\&=-\frac{1}{6}\|d\xi\|_f^2-2\|d_f^*\xi\|_f^2.
\end{align*}
Here we used the assumption that $0=(d^B_f)^*\xi_l=d^*_f\xi_l+\frac{1}{2}H_{mlk}\xi_{mk}$ and $dH=0$.

\end{proof}

\subsection{Symmetric Part}
In this subsection, we use the complex coordinate given in \Cref{P6} to analyze symmetric part. First, we derive the following lemmas.

\begin{lemma}
Let $(M^{2n},g,H,J,f)$ be a pluriclosed steady soliton. Then,
\begin{align}
    \nonumber&(\nabla^B_{\overline{\alpha}} H)_{\beta\overline{\tau}\alpha} =\frac{1}{2}(\mathcal{L}_{\theta^\sharp}g)_{\beta\overline{\tau}}+\frac{1}{2}(\iota_{\nabla f}H)_{\beta\overline{\tau}}+\big(\theta_{\overline{\delta}}H_{\beta\overline{\tau}\delta}-\theta_{\delta}H_{\beta\overline{\tau}\overline{\delta}}+H_{\overline{\tau}\alpha\delta}H_{\beta\overline{\alpha}\overline{\delta}}\big)
    \nonumber \\&(\nabla_{\overline{\alpha}} H)_{\beta\overline{\tau}\alpha}=\frac{1}{2}(\mathcal{L}_{\theta^\sharp}g)_{\beta\overline{\tau}}+\frac{1}{2}(\iota_{\nabla f}H)_{\beta\overline{\tau}}+\frac{1}{2}\big(\theta_{\overline{\delta}}H_{\beta\overline{\tau}\delta}-\theta_{\delta}H_{\beta\overline{\tau}\overline{\delta}}+H_{\overline{\tau}\alpha\delta}H_{\beta\overline{\alpha}\overline{\delta}}\big)
    \nonumber\\&(\nabla^B_\alpha H)_{\beta\overline{\tau}\overline{\alpha}}=-\frac{1}{2}(\mathcal{L}_{\theta^\sharp}g)_{\beta\overline{\tau}}+\frac{1}{2}(\iota_{\nabla f}H)_{\beta\overline{\tau}}-\big(\theta_{\overline{\delta}}H_{\beta\overline{\tau}\delta}-\theta_{\delta}H_{\beta\overline{\tau}\overline{\delta}}+H_{\overline{\tau}\alpha\delta}H_{\beta\overline{\alpha}\overline{\delta}}\big)
    \nonumber\\&(\nabla_\alpha H)_{\beta\overline{\tau}\overline{\alpha}}=-\frac{1}{2}(\mathcal{L}_{\theta^\sharp}g)_{\beta\overline{\tau}}+\frac{1}{2}(\iota_{\nabla f}H)_{\beta\overline{\tau}}-\frac{1}{2}\big(\theta_{\overline{\delta}}H_{\beta\overline{\tau}\delta}-\theta_{\delta}H_{\beta\overline{\tau}\overline{\delta}}+H_{\overline{\tau}\alpha\delta}H_{\beta\overline{\alpha}\overline{\delta}}\big).\label{ll1}
\end{align}

\end{lemma}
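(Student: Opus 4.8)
The plan is to reduce the four identities of (\ref{ll1}) to two elementary computations for the Levi--Civita connection — a ``sum'' and a ``difference'' of the two contractions $(\nabla_{\overline{\alpha}}H)_{\beta\overline{\tau}\alpha}$ and $(\nabla_\alpha H)_{\beta\overline{\tau}\overline{\alpha}}$ (both summed over $\alpha$) — and then to pass to the Bismut connection by subtracting the torsion term. I work at a fixed point $p$ in the coordinates of \Cref{L1}, so that $g_{\alpha\overline{\beta}}=\delta_{\alpha\beta}$ and the Christoffel symbols of $\nabla$ and $\nabla^B$ are the explicit ones in (\ref{26}). Using the cyclic antisymmetry $H_{\beta\overline{\tau}\alpha}=H_{\alpha\beta\overline{\tau}}$ and $H_{\beta\overline{\tau}\overline{\alpha}}=H_{\overline{\alpha}\beta\overline{\tau}}$, the two Levi--Civita contractions are exactly the two halves of $\nabla^m H_{m\beta\overline{\tau}}=(\nabla_{\overline{\alpha}}H)_{\beta\overline{\tau}\alpha}+(\nabla_\alpha H)_{\beta\overline{\tau}\overline{\alpha}}$.

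First I would evaluate the sum. Since $(d^*H)_{\beta\overline{\tau}}=-\nabla^m H_{m\beta\overline{\tau}}$, the second soliton equation $d^*H+\iota_{\nabla f}H=0$ of (\ref{s}) gives at once
\[
(\nabla_{\overline{\alpha}}H)_{\beta\overline{\tau}\alpha}+(\nabla_\alpha H)_{\beta\overline{\tau}\overline{\alpha}}=(\iota_{\nabla f}H)_{\beta\overline{\tau}},
\]
which supplies the $\tfrac12(\iota_{\nabla f}H)_{\beta\overline{\tau}}$ common to all four lines.

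Next I would evaluate the difference. Expanding $(dH)_{\overline{\alpha}\alpha\beta\overline{\tau}}=0$ and summing over $\alpha$, the two uncontracted terms assemble into $\sum_\alpha\big(\partial_{\overline{\alpha}}H_{\alpha\beta\overline{\tau}}-\partial_\alpha H_{\overline{\alpha}\beta\overline{\tau}}\big)$, while the two contracted terms become $\partial_\beta(\sum_\alpha H_{\overline{\alpha}\alpha\overline{\tau}})-\partial_{\overline{\tau}}(\sum_\alpha H_{\overline{\alpha}\alpha\beta})=-\partial_\beta\theta_{\overline{\tau}}-\partial_{\overline{\tau}}\theta_\beta$ by the Lee-form identities (\ref{28}). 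Rewriting every partial derivative as a covariant derivative via the Christoffel symbols (\ref{26}) converts the symmetrized $\partial\theta$ into $(\mathcal{L}_{\theta^\sharp}g)_{\beta\overline{\tau}}$ and the uncontracted terms into the covariant contractions, at the cost of precisely the quadratic expression $\theta_{\overline{\delta}}H_{\beta\overline{\tau}\delta}-\theta_\delta H_{\beta\overline{\tau}\overline{\delta}}+H_{\overline{\tau}\alpha\delta}H_{\beta\overline{\alpha}\overline{\delta}}$ appearing in (\ref{ll1}). Solving the $2\times 2$ system given by the sum and the difference then yields the second and fourth lines of (\ref{ll1}).

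Finally, the first and third lines follow from the torsion relation $\nabla^B=\nabla+\tfrac12 g^{-1}H$, which for the $3$-form $H$ reads $(\nabla^B_w H)_{xyz}=(\nabla_w H)_{xyz}-\tfrac12\big(H_{wx}{}^{m}H_{myz}+H_{wy}{}^{m}H_{xmz}+H_{wz}{}^{m}H_{xym}\big)$. Contracting and using (\ref{28}) to evaluate $\sum_\alpha H_{\overline{\alpha}\alpha}{}^{m}$ in terms of $\theta$ turns the factor $\tfrac12$ in front of the quadratic parenthesis of lines two and four into the full coefficient of lines one and three. I expect the main obstacle to be exactly this bookkeeping: one must track the $(1,0)$/$(0,1)$ type of each contracted index and check that the Christoffel corrections produced in passing from $\partial$ to $\nabla$ recombine with the torsion's quadratic terms to give precisely $(\mathcal{L}_{\theta^\sharp}g)_{\beta\overline{\tau}}$ and the stated cubic-in-$H$ term, with nothing left over.
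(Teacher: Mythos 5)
Your strategy is genuinely different from the paper's, and in outline it is viable. The paper never contracts $dH=0$ directly: it contracts the Bismut Bianchi identity (\ref{10}) over $(\alpha,\overline{\alpha})$, kills $R^B_{\alpha\beta\overline{\tau}\overline{\alpha}}$ by type (since $\nabla^BJ=0$), identifies the two surviving curvature traces with $\rho_B(\partial_\beta,J\partial_{\overline{\tau}})$ and $-R^B_{\overline{\tau}\beta}$ via (\ref{29}), and then uses the soliton structure twice --- through (\ref{V}) (i.e.\ $\rho_B^{1,1}=\mathcal{L}_V\omega$) and $\Rc^B=-(\nabla^B)^2f$ --- to obtain the first line of (\ref{ll1}); the remaining lines come from the $\nabla^B$ versus $\nabla$ torsion relation, exactly as in your last step. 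Your scheme (sum of the two contractions from the second equation of (\ref{s}), difference from the contracted $(2,2)$ part of $dH=0$, then solve the $2\times 2$ system and convert) avoids the Bismut curvature, (\ref{10}), (\ref{29}) and the citation of (\ref{V}) entirely, and it cleanly separates the hypotheses: the difference identity is a fact about arbitrary pluriclosed metrics, with the soliton equation entering only through the sum. Your sum computation and your final torsion step are both correct.

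There is, however, one concrete error in the difference computation as written. You assert that the contracted terms become $-\partial_\beta\theta_{\overline{\tau}}-\partial_{\overline{\tau}}\theta_\beta$ ``by the Lee-form identities (\ref{28})''. But (\ref{28}) identifies $\theta$ with the \emph{metric} contraction $g^{\overline{\sigma}\alpha}H_{\overline{\sigma}\alpha\bullet}$, which agrees with the plain coordinate sum $\sum_\alpha H_{\overline{\alpha}\alpha\bullet}$ only at the center point of the coordinates of \Cref{L1}; the relation cannot be differentiated in the form you use. One must instead differentiate the tensorial identity and invoke $\partial_\beta g^{\alpha\overline{\gamma}}=-\tfrac12 H_{\beta}{}^{\alpha\overline{\gamma}}$, $\partial_{\overline{\tau}}g^{\alpha\overline{\gamma}}=\tfrac12 H_{\overline{\tau}}{}^{\alpha\overline{\gamma}}$ from \Cref{P6}, which produces extra terms quadratic in $H$ that your accounting (attributing all corrections to Christoffel symbols acting on the uncontracted terms and on $\theta$) omits. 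These terms are not optional: writing $A=(\nabla_{\overline{\alpha}}H)_{\beta\overline{\tau}\alpha}$, $B=(\nabla_{\alpha}H)_{\beta\overline{\tau}\overline{\alpha}}$ and $Q$ for the quadratic expression in (\ref{ll1}), a literal execution of your plan yields
\begin{align*}
A-B=(\mathcal{L}_{\theta^\sharp}g)_{\beta\overline{\tau}}+Q-H_{\beta\alpha\overline{\delta}}H_{\overline{\tau}\overline{\alpha}\delta},
\end{align*}
and the spurious last contraction is nonzero already on the Hopf surface, so the naive output contradicts the lemma. Once the $\partial g^{-1}$ corrections are included the computation does close: only two quadratic contraction patterns can occur, namely $H_{\beta\overline{\alpha}\overline{\delta}}H_{\overline{\tau}\alpha\delta}$ and $H_{\beta\alpha\overline{\delta}}H_{\overline{\tau}\overline{\alpha}\delta}$, the coefficient of the second cancels between the two sources of corrections, and one lands exactly on $A-B=(\mathcal{L}_{\theta^\sharp}g)_{\beta\overline{\tau}}+Q$; your $2\times2$ solve and the torsion relation then give all four lines of (\ref{ll1}). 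So the plan is sound, but the bookkeeping you defer conceals a genuine correction, not merely a tedious one.
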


\begin{proof}
Apply the Bianchi identity (\ref{10}) and (\ref{28}),
\begin{align*}
    R^B_{\beta \overline{\tau}\alpha\overline{\alpha}}+ R^B_{\overline{\tau}\alpha\beta\overline{\alpha}}+ R^B_{\alpha \beta \overline{\tau}\overline{\alpha}}&=(\nabla^B_{\overline{\alpha}} H)_{\beta\overline{\tau}\alpha} -\langle H(\partial_\beta,\partial_{\overline{\tau}}),H(\partial_\alpha,\partial_{\overline{\alpha}})  \rangle-\langle H(\partial_{\overline{\tau}},\partial_{\alpha}),H(\partial_\beta,\partial_{\overline{\alpha}})  \rangle-\langle H(\partial_\alpha,\partial_{\beta}),H(\partial_{\overline{\tau}},\partial_{\overline{\alpha}})  \rangle
    \\&=(\nabla^B_{\overline{\alpha}} H)_{\beta\overline{\tau}\alpha} -\theta_{\overline{\delta}}H_{\beta\overline{\tau}\delta}+\theta_{\delta}H_{\beta\overline{\tau}\overline{\delta}}-H_{\overline{\tau}\alpha\delta}H_{\beta\overline{\alpha}\overline{\delta}}-H_{\overline{\tau}\alpha\overline{\delta}}H_{\beta\overline{\alpha}\delta}-H_{\alpha\beta\overline{\delta}}H_{\overline{\tau}\overline{\alpha}\delta}.
\end{align*}    
Let $V=\frac{1}{2}(\theta^{\sharp}-\nabla f)$. By (\ref{V}), we get $\rho_B(\partial_\beta,J\partial_{\overline{\tau}})=(\mathcal{L}_Vg)_{\beta\overline{\tau}}$. Then, by (\ref{29}),
\begin{align*}
    (\nabla^B_{\overline{\alpha}} H)_{\beta\overline{\tau}\alpha} &= R^B_{\beta \overline{\tau}\alpha\overline{\alpha}}+ R^B_{\overline{\tau}\alpha\beta\overline{\alpha}}+\theta_{\overline{\delta}}H_{\beta\overline{\tau}\delta}-\theta_{\delta}H_{\beta\overline{\tau}\overline{\delta}}+H_{\overline{\tau}\alpha\delta}H_{\beta\overline{\alpha}\overline{\delta}}
    \\&=\rho_B(\partial_\beta,J\partial_{\overline{\tau}})-R^B_{\overline{\tau}\beta}+\theta_{\overline{\delta}}H_{\beta\overline{\tau}\delta}-\theta_{\delta}H_{\beta\overline{\tau}\overline{\delta}}+H_{\overline{\tau}\alpha\delta}H_{\beta\overline{\alpha}\overline{\delta}}
    \\&=(\mathcal{L}_Vg)_{\beta\overline{\tau}}+\nabla^B_{\overline{\tau}}\nabla^B_\beta f+\theta_{\overline{\delta}}H_{\beta\overline{\tau}\delta}-\theta_{\delta}H_{\beta\overline{\tau}\overline{\delta}}+H_{\overline{\tau}\alpha\delta}H_{\beta\overline{\alpha}\overline{\delta}}
    \\&=\frac{1}{2}(\mathcal{L}_{\theta^\sharp}g)_{\beta\overline{\tau}}+\frac{1}{2}(\iota_{\nabla f}H)_{\beta\overline{\tau}}+\theta_{\overline{\delta}}H_{\beta\overline{\tau}\delta}-\theta_{\delta}H_{\beta\overline{\tau}\overline{\delta}}+H_{\overline{\tau}\alpha\delta}H_{\beta\overline{\alpha}\overline{\delta}},
\end{align*}
where we used the soliton assumption that $R^B_{\overline{\tau}\beta}=-\nabla^B_{\overline{\tau}}\nabla^B_\beta f$. By the definition (\ref{20}),
\begin{align*}
    (\nabla^B_{\overline{\alpha}} H)_{\beta\overline{\tau}\alpha} &=(\nabla_{\overline{\alpha}} H)_{\beta\overline{\tau}\alpha}-\frac{1}{2}\langle H(\partial_{\overline{\alpha}},\partial_{\beta}),H(\partial_{\overline{\tau}},\partial_{\alpha})  \rangle+\frac{1}{2}\langle H(\partial_{\overline{\alpha}},\partial_{\overline{\tau}}),H(\partial_\beta,\partial_{\alpha})  \rangle-\frac{1}{2}\langle H(\partial_\beta,\partial_{\overline{\tau}}),H(\partial_{\overline{\alpha}},\partial_{\alpha})  \rangle
    \\&=(\nabla_{\overline{\alpha}} H)_{\beta\overline{\tau}\alpha}+\frac{1}{2}\big(\theta_{\overline{\delta}}H_{\beta\overline{\tau}\delta}-\theta_{\delta}H_{\beta\overline{\tau}\overline{\delta}}+H_{\overline{\tau}\alpha\delta}H_{\beta\overline{\alpha}\overline{\delta}}\big)
\end{align*}
Other parts follow similarly.

\end{proof}

In the following, let $\eta$ be an anti-Hermitian, symmetric 2-tensor, satisfying the formula (\ref{k2}). For convenience, we denote three real forms $H_1, H_2,H_3$ by 
\begin{align}
    H_1(\eta)=H_{\alpha\overline{\beta}\tau}H_{\overline{\alpha}\beta \overline{\delta}}\eta_{\overline{\tau}\overline{\gamma}}\eta_{\delta\gamma}, \quad  H_2(\eta)=H_{\alpha\beta\overline{\delta}}H_{\overline{\alpha}\overline{\beta}\tau}\eta_{\overline{\tau}\overline{\gamma}}\eta_{\delta\gamma},\quad H_3(\eta)=H_{\alpha\overline{\beta}\tau}H_{\overline{\alpha}\overline{\gamma}\delta}\eta_{\beta\gamma}\eta_{\overline{\tau}\overline{\delta}}.
\end{align}

Since $\eta$ satisfies the formula (\ref{k2}), we further derive the following proposition.
\begin{lemma}
 Suppose $\eta$ is an anti-Hermitian, symmetric 2-tensor, satisfying the formula (\ref{k2}). Then,
 \begin{align}
     H_2(\eta)=-2H_3(\eta), \quad (\nabla^B_\alpha\eta)_{\beta\gamma}H_{\overline{\alpha}\overline{\beta}\tau}\eta_{\overline{\gamma}\overline{\tau}}=-H_2(\eta). \label{ll}
 \end{align}
\end{lemma}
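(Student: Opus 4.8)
I would prove the two identities as a package, by computing the single contraction
$D:=(\nabla^B_\alpha\eta)_{\beta\gamma}H_{\overline\alpha\overline\beta\tau}\eta_{\overline\gamma\overline\tau}$
in two different ways. Throughout I work in the complex frame of \Cref{P6}, in which $H$ has only the components $H_{\alpha\beta\overline\gamma}$, antisymmetric in $\alpha,\beta$, together with their conjugates, while $\eta$ is anti-Hermitian and symmetric, so that only $\eta_{\alpha\beta}=\eta_{\beta\alpha}$ and $\eta_{\overline\alpha\overline\beta}$ are nonzero.

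The first computation of $D$ is purely algebraic. Since $H_{\overline\alpha\overline\beta\tau}$ is antisymmetric in $\alpha,\beta$, only the antisymmetric-in-$(\alpha,\beta)$ part of $\nabla^B\eta$ contributes, so I may replace $(\nabla^B_\alpha\eta)_{\beta\gamma}$ by $\tfrac12\big[(\nabla^B_\alpha\eta)_{\beta\gamma}-(\nabla^B_\beta\eta)_{\alpha\gamma}\big]$ and insert the right-hand side of (\ref{k2}). The derivative then cancels and three quadratic terms in $H$ and $\eta$ remain. After reordering every $H$-factor into its standard $(2,1)/(1,2)$ position and using that $\eta$ is symmetric, the term produced by $-H_{\alpha\beta\overline\tau}\eta_{\tau\gamma}$ is exactly $-\tfrac12H_2(\eta)$; the other two terms are interchanged by the relabeling $\alpha\leftrightarrow\beta$ (using the antisymmetry of $H_{\overline\alpha\overline\beta\tau}$), and after a cyclic relabeling of the indices contracted into the two $\eta$'s each is recognized as $\tfrac12H_3(\eta)$. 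Hence $D=-\tfrac12H_2+H_3$, which already shows that the two asserted identities are equivalent: the second, $D=-H_2$, holds if and only if $H_2=-2H_3$.

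The remaining content is therefore the relation $H_2=-2H_3$, and this is where the real difficulty lies. Introducing the auxiliary tensor $T_{\alpha\beta\gamma}:=H_{\alpha\beta\overline\delta}\eta_{\delta\gamma}$ (antisymmetric in $\alpha,\beta$) one checks $H_2=\sum|T_{\alpha\beta\gamma}|^2$ and $H_3=-\sum T_{\alpha\beta\gamma}\overline{T_{\alpha\gamma\beta}}$, so that $H_2+2H_3$ reduces to a definite relation between the symmetric and antisymmetric parts of $T$ in its last two slots; this relation is not forced by the tensor symmetries of $H$ and $\eta$ alone, so the closedness $dH=0$ must enter. The plan is to extract it from the second computation of $D$: integrating $\int_M D\,e^{-f}dV_g$ by parts, moving $\nabla^B_\alpha$ off $\eta$, and using $\divg^B_f\eta=0$ together with the expressions for $\nabla^B H$ from (\ref{ll1}) to rewrite the divergence $\nabla^B_\alpha H_{\overline\alpha\overline\beta\tau}$ in terms of the Lee form and $H^2$, which I expect to collapse to $\int_M(-H_2)e^{-f}dV_g$. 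Comparing this with $D=-\tfrac12H_2+H_3$ then forces $H_2=-2H_3$ and, with it, $D=-H_2$. The hard part will be the bookkeeping of the $\nabla^B H$ terms so that they cancel cleanly — this is precisely the step that consumes the pluriclosed hypothesis, and it is the only place where something beyond the antisymmetry of $H$ and the symmetry of $\eta$ is used.
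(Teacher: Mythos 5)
Your first, purely algebraic computation is correct, and it reproduces what the paper does for the second identity: antisymmetrizing in $(\alpha,\beta)$, inserting (\ref{k2}), and relabeling gives $D=-\tfrac12 H_2(\eta)+H_3(\eta)$, which is equivalent to the paper's relation $D=-D-H_2(\eta)+2H_3(\eta)$. The genuine gap is in the second half. You claim that $H_2=-2H_3$ is ``not forced by the tensor symmetries of $H$ and $\eta$ alone, so the closedness $dH=0$ must enter'' via a global argument. This is wrong: you have simply not used the full content of (\ref{k2}). Take the cyclic sum of (\ref{k2}) over $(\alpha,\beta,\gamma)$. Since $\eta$ is symmetric, $\nabla^B\eta$ is symmetric in its last two slots, so the six derivative terms on the left cancel in pairs, while the three right-hand sides add up to a single copy of the cyclic expression; one obtains the \emph{pointwise} algebraic identity
\begin{align*}
0=H_{\alpha\beta\overline{\tau}}\eta_{\tau\gamma}+H_{\beta\gamma\overline{\tau}}\eta_{\tau\alpha}+H_{\gamma\alpha\overline{\tau}}\eta_{\tau\beta},
\end{align*}
which in your notation says exactly that $T_{\alpha\beta\gamma}+T_{\beta\gamma\alpha}+T_{\gamma\alpha\beta}=0$ --- precisely the relation between the symmetric and antisymmetric parts of $T$ that you asserted was unavailable. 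Contracting this identity with $H_{\overline{\alpha}\overline{\beta}\tau}\eta_{\overline{\tau}\overline{\gamma}}$ gives $0=H_2(\eta)+2H_3(\eta)$ immediately, and combining with your $D=-\tfrac12H_2+H_3$ yields $D=-H_2$. This is the paper's proof; it is pointwise and uses nothing beyond (\ref{k2}) and the symmetry of $\eta$.

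Your proposed substitute --- integrating $D$ against $e^{-f}dV_g$, moving $\nabla^B_\alpha$ off $\eta$, and invoking $\divg^B_f\eta=0$ together with the soliton identities (\ref{ll1}) --- could not prove the lemma even if the expected cancellation were verified. First, it would only ever produce the integrated statements $\int_M H_2(\eta)e^{-f}dV_g=-2\int_M H_3(\eta)e^{-f}dV_g$ and $\int_M D\,e^{-f}dV_g=-\int_M H_2(\eta)e^{-f}dV_g$, which are strictly weaker than the pointwise identities (\ref{ll}) being asserted. Second, it imports hypotheses the lemma does not have: the statement assumes only that $\eta$ is anti-Hermitian, symmetric, and satisfies (\ref{k2}); the conditions $\divg^B_f\eta=0$ and the pluriclosed steady soliton equations (needed for (\ref{ll1})) enter only in the subsequent lemmas. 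Finally, the decisive cancellation in your plan is explicitly left as an expectation rather than a computation, so the argument is incomplete even on its own terms.
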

\begin{proof}
Recall the formula (\ref{k2}), 
\begin{align*}
    (\nabla^B_\alpha\eta)_{\beta\gamma}-(\nabla^B_\beta\eta)_{\alpha\gamma}=-H_{\alpha\beta\overline{\tau}}\eta_{\tau\gamma}+H_{\beta\gamma\overline{\tau}}\eta_{\tau\alpha}+H_{\gamma\alpha\overline{\tau}}\eta_{\tau\beta}.
\end{align*}
Since $\eta$ is symmetric, we deduce that
\begin{align}
    0=H_{\alpha\beta\overline{\tau}}\eta_{\tau\gamma}+H_{\beta\gamma\overline{\tau}}\eta_{\tau\alpha}+H_{\gamma\alpha\overline{\tau}}\eta_{\tau\beta}. \label{ab}
\end{align}
Therefore,
\begin{align*}
    H_2(\eta)&=H_{\alpha\beta\overline{\delta}}H_{\overline{\alpha}\overline{\beta}\tau}\eta_{\overline{\tau}\overline{\gamma}}\eta_{\delta\gamma}
    \\&=-(H_{\beta\gamma\overline{\delta}}\eta_{\delta\alpha}+H_{\gamma\alpha\overline{\delta}}\eta_{\delta\beta})H_{\overline{\alpha}\overline{\beta}\tau}\eta_{\overline{\tau}\overline{\gamma}}
    \\&=-2H_3(\eta).
\end{align*}
Moreover, we compute 
\begin{align*}
    (\nabla^B_\alpha\eta)_{\beta\gamma}H_{\overline{\alpha}\overline{\beta}\tau}\eta_{\overline{\gamma}\overline{\tau}}&=\big( (\nabla^B_\beta\eta)_{\alpha\gamma}-H_{\alpha\beta\overline{\delta}}\eta_{\delta\gamma}+H_{\beta\gamma\overline{\delta}}\eta_{\delta\alpha}+H_{\gamma\alpha\overline{\delta}}\eta_{\delta\beta} \big)H_{\overline{\alpha}\overline{\beta}\tau}\eta_{\overline{\gamma}\overline{\tau}}
    \\&=-(\nabla^B_\alpha\eta)_{\beta\gamma}H_{\overline{\alpha}\overline{\beta}\tau}\eta_{\overline{\gamma}\overline{\tau}}-H_2(\eta)+2H_3(\eta).
\end{align*}
Hence,
\begin{align*}
    (\nabla^B_\alpha\eta)_{\beta\gamma}H_{\overline{\alpha}\overline{\beta}\tau}\eta_{\overline{\gamma}\overline{\tau}}=-H_2(\eta).
\end{align*}
\end{proof}

Using the integration by part, we can deduce the following proposition.
\begin{proposition}
Let $(M^{2n},g,H,J,f)$ be a pluriclosed steady soliton and $\eta$ is an anti-Hermitian, symmetric 2-tensor, satisfying the equation (\ref{k2}). Then, 
\begin{align}
    \nonumber\int_M &\Big( (\nabla^B_{\overline{\alpha}}\eta)_{\beta\gamma}H_{\alpha\overline{\beta}\tau}\eta_{\overline{\gamma}\overline{\tau}}+(\nabla^B_{\alpha}\eta)_{\overline{\beta}\overline{\gamma}}H_{\overline{\alpha}\beta\overline{\tau}}\eta_{\gamma\tau}\Big)e^{-f}dV_g+\int_M \Big(\nabla_{\overline{\alpha}}f H_{\alpha\beta\overline{\tau}}\eta_{\overline{\beta}\overline{\gamma}}\eta_{\tau\gamma}-\nabla_{\alpha}f H_{\overline{\alpha}\beta\overline{\tau}}\eta_{\overline{\beta}\overline{\gamma}}\eta_{\tau\gamma} \Big)e^{-f}dV_g
    \\&=\int_M -2H_1(\eta) e^{-f}dV_g+\int_M (\mathcal{L}_{\theta^\sharp }g)_{\beta\overline{\tau}}\eta_{\overline{\beta}\overline{\gamma}}\eta_{\tau\gamma}e^{-f}dV_g, \label{pp1}
\end{align}
 and
\begin{align}
    \int_M \big(\theta_{\overline{\delta}}H_{\beta\overline{\tau}\delta}-\theta_{\delta}H_{\beta\overline{\tau}\overline{\delta}}\big) \eta_{\overline{\beta}\overline{\gamma}}\eta_{\tau\gamma} e^{-f}dV_g=\int_M -H_1(\eta)e^{-f}dV_g. \label{pp2}
\end{align}

\end{proposition}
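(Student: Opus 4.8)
The plan is to derive both identities by integration by parts against the twisted volume $e^{-f}dV_g$, combining the algebraic constraint coming from the symmetry of $\eta$ with the Bianchi-type structure equations for $\nabla^B H$. I would work throughout in the special complex coordinates of \Cref{P6}, where the Bismut Christoffel symbols are given by (\ref{26}), and record at the outset the three inputs I will lean on: the cyclic constraint (\ref{ab}), namely $H_{\alpha\beta\overline{\tau}}\eta_{\tau\gamma}+H_{\beta\gamma\overline{\tau}}\eta_{\tau\alpha}+H_{\gamma\alpha\overline{\tau}}\eta_{\tau\beta}=0$; the identities (\ref{ll}), which read $H_2(\eta)=-2H_3(\eta)$ and $(\nabla^B_\alpha\eta)_{\beta\gamma}H_{\overline{\alpha}\overline{\beta}\tau}\eta_{\overline{\gamma}\overline{\tau}}=-H_2(\eta)$; and the divergence-free condition $\divg^B_f\eta=0$, which in these coordinates relates $(\nabla^B_\alpha\eta)_{\overline{\alpha}\overline{\gamma}}$ to $\nabla f$. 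Since $\eta$ is anti-Hermitian its only nonvanishing components are $\eta_{\alpha\beta}$ and $\eta_{\overline{\alpha}\overline{\beta}}$, and the two integrands on the left of (\ref{pp1}) are complex conjugates, so it suffices to treat one and add the conjugate.

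For (\ref{pp1}) I would integrate by parts $\int_M(\nabla^B_{\overline{\alpha}}\eta)_{\beta\gamma}H_{\alpha\overline{\beta}\tau}\eta_{\overline{\gamma}\overline{\tau}}e^{-f}dV_g$ in the $\overline{\alpha}$-slot, which is contracted against the holomorphic index of $H$. Because $H$ is totally skew its trace vanishes, so the Bismut and Levi-Civita divergences of a vector field coincide and the twisted divergence theorem applies cleanly. The integration by parts produces three groups of terms: a $\nabla_{\overline{\alpha}}f$ contraction, a term where the derivative falls on $H_{\alpha\overline{\beta}\tau}$, and a term where it falls on $\eta_{\overline{\gamma}\overline{\tau}}$. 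The $\nabla f$ terms are designed to cancel against the explicit $\nabla f$ contractions in the second integral of (\ref{pp1}). For the term where the derivative hits $H$, I substitute the expression for $\nabla^B H$ from (\ref{ll1}); this is precisely where the factor $\mathcal{L}_{\theta^\sharp}g$ and the quadratic contraction $H_1(\eta)$ are produced. For the term where the derivative hits $\eta$, I invoke the second identity in (\ref{ll}) to convert the surviving covariant derivative of $\eta$ into the purely algebraic quantity $H_2(\eta)$, which through $H_2(\eta)=-2H_3(\eta)$ is reorganized together with the $H$-derivative contribution. Collecting everything and simplifying the $H$-quadratic terms should leave exactly $-2H_1(\eta)+(\mathcal{L}_{\theta^\sharp}g)_{\beta\overline{\tau}}\eta_{\overline{\beta}\overline{\gamma}}\eta_{\tau\gamma}$.

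For (\ref{pp2}), which is purely algebraic in $H$ and $\eta$, I would not integrate by parts but instead rewrite the Lee form as a trace of $H$ using (\ref{28}), so that $\theta_{\overline{\delta}}H_{\beta\overline{\tau}\delta}$ and $\theta_\delta H_{\beta\overline{\tau}\overline{\delta}}$ each become a double contraction of $H$ against $H$, weighted by $\eta_{\overline{\beta}\overline{\gamma}}\eta_{\tau\gamma}$. After relabeling I would apply the cyclic constraint (\ref{ab}) and its conjugate to trade one of the two factors of $H$, and use the symmetry and anti-Hermiticity of $\eta$, so that the combination telescopes into a single term matching $-H_1(\eta)$. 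The integral sign is then only a formality, the identity holding pointwise.

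The principal obstacle I anticipate is bookkeeping rather than any single conceptual step: tracking the index types (holomorphic versus antiholomorphic) through the Bismut integration by parts, handling consistently the torsion corrections distinguishing $\nabla^B$ from $\nabla$, and—most delicately—verifying that the four cases of (\ref{ll1}), together with the algebraic identities (\ref{ab}) and (\ref{ll}), conspire so that every occurrence of $H_2(\eta)$ and $H_3(\eta)$ is eliminated in favor of the single quantity $H_1(\eta)$ with the stated numerical coefficient. A secondary check is that the $\nabla f$ contractions generated by differentiating $e^{-f}$ exactly match those written on the left-hand side of (\ref{pp1}), since a sign slip there would corrupt the coefficient of $\mathcal{L}_{\theta^\sharp}g$ through the soliton relation $V=\tfrac12(\theta^\sharp-\nabla f)$ from (\ref{V}).
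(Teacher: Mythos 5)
Your treatment of (\ref{pp2}) contains the decisive gap: (\ref{pp2}) is \emph{not} a pointwise algebraic identity, and no manipulation using (\ref{28}), the cyclic constraint (\ref{ab}), and the symmetry/anti-Hermiticity of $\eta$ can telescope the left-hand integrand into $-H_1(\eta)$. You can see this concretely in complex dimension $2$: there the left side of (\ref{ab}) is totally antisymmetric in three holomorphic indices ranging over $\{1,2\}$, hence (\ref{ab}) is vacuous, so \emph{every} anti-Hermitian symmetric $\eta$ is admissible pointwise. Writing $H_{12\overline{1}}=a$, $H_{12\overline{2}}=b$ in a unitary frame, (\ref{28}) gives $\theta_1=-b$, $\theta_2=a$, and a direct computation shows the integrand of (\ref{pp2}) equals $-2\sum_\gamma\lvert a\eta_{1\gamma}+b\eta_{2\gamma}\rvert^2$, which is $-H_2(\eta)$, whereas $-H_1(\eta)=-(\lvert a\rvert^2+\lvert b\rvert^2)\sum_{\tau,\gamma}\lvert\eta_{\tau\gamma}\rvert^2$. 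Taking $a=1$, $b=0$, $\eta_{11}=1$ and all other components zero yields $-2$ on one side and $-1$ on the other. So the equality in (\ref{pp2}) is genuinely integral: it requires the differential hypothesis (\ref{k2}), integration by parts, and the soliton structure encoded in (\ref{ll1}); ``the integral sign is only a formality'' is exactly the step that fails.

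This infects your plan for (\ref{pp1}) as well. Integrating the first term by parts in the $\overline{\alpha}$-slot and substituting (\ref{ll1}) produces the $\mathcal{L}_{\theta^\sharp}g$ term, the $\nabla f$ terms, the $\theta$--$H$ cross terms (i.e.\ precisely the left-hand integrand of (\ref{pp2})), and an $H$-quadratic contraction which is $H_2(\eta)$, \emph{not} $H_1(\eta)$ as you assert: in the term $H_{\overline{\tau}\alpha\delta}H_{\beta\overline{\alpha}\overline{\delta}}\eta_{\overline{\beta}\overline{\gamma}}\eta_{\tau\gamma}$ from (\ref{ll1}) both inter-$H$ contractions pair an unbarred index with a barred one in the same direction, which is the $H_2$ pattern. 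After the $H_2$ contributions cancel (using (\ref{ll})), your single computation yields only that the left-hand side of (\ref{pp1}) equals $\int_M(\mathcal{L}_{\theta^\sharp}g)_{\beta\overline{\tau}}\eta_{\overline{\beta}\overline{\gamma}}\eta_{\tau\gamma}e^{-f}dV_g$ plus \emph{twice the left-hand side of} (\ref{pp2}); converting that last quantity into $-2\int_M H_1(\eta)e^{-f}dV_g$ is exactly (\ref{pp2}), to which you have no independent route, so the argument is circular. The paper closes this circle with an idea missing from your proposal: it performs the integration by parts \emph{twice}, once against the Bismut derivative $(\nabla^B H)$ and once against the Levi-Civita derivative $(\nabla H)$, both supplied by (\ref{ll1}). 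The Levi-Civita computation, after converting $\nabla\eta$ back to $\nabla^B\eta$ at the cost of $H_1,H_2,H_3$ corrections via (\ref{k2}) and (\ref{ll}), produces an extra $\int_M H_1(\eta)e^{-f}dV_g$ relative to the Bismut computation, while its right-hand side from (\ref{ll1}) carries the coefficient $1$ instead of $2$ on the $\theta$--$H$ and $H_2$ terms; comparing the two identities delivers (\ref{pp2}) and (\ref{pp1}) simultaneously. Without this second computation neither identity is reachable.
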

\begin{proof}
 Using (\ref{ll1}), we derive that
 \begin{align*}
     \int_M&\big((\nabla^B_{\overline{\alpha}}H)_{\beta\overline{\tau}\alpha}-(\nabla^B_\alpha H)_{\beta\overline{\tau}\overline{\alpha}}\big) \eta_{\overline{\beta}\overline{\gamma}}\eta_{\tau\gamma} e^{-f}dV_g
     \\&=\int_M (\mathcal{L}_{\theta^\sharp }g)_{\beta\overline{\tau}}\eta_{\overline{\beta}\overline{\gamma}}\eta_{\tau\gamma}e^{-f}dV_g+\int_M 2\big(\theta_{\overline{\delta}}H_{\beta\overline{\tau}\delta}-\theta_{\delta}H_{\beta\overline{\tau}\overline{\delta}}+H_{\overline{\tau}\alpha\delta}H_{\beta\overline{\alpha}\overline{\delta}}\big) \eta_{\overline{\beta}\overline{\gamma}}\eta_{\tau\gamma} e^{-f}dV_g,
 \end{align*}
 and
 \begin{align*}
      \int_M&\big((\nabla_{\overline{\alpha}}H)_{\beta\overline{\tau}\alpha}-(\nabla_\alpha H)_{\beta\overline{\tau}\overline{\alpha}}\big) \eta_{\overline{\beta}\overline{\gamma}}\eta_{\tau\gamma} e^{-f}dV_g
     \\&=\int_M (\mathcal{L}_{\theta^\sharp }g)_{\beta\overline{\tau}}\eta_{\overline{\beta}\overline{\gamma}}\eta_{\tau\gamma}e^{-f}dV_g+\int_M \big(\theta_{\overline{\delta}}H_{\beta\overline{\tau}\delta}-\theta_{\delta}H_{\beta\overline{\tau}\overline{\delta}}+H_{\overline{\tau}\alpha\delta}H_{\beta\overline{\alpha}\overline{\delta}}\big) \eta_{\overline{\beta}\overline{\gamma}}\eta_{\tau\gamma} e^{-f}dV_g.
 \end{align*}
 Using integration by parts, we get 
\begin{align*}
    \int_M\big(&(\nabla^B_{\overline{\alpha}}H)_{\beta\overline{\tau}\alpha}-(\nabla^B_\alpha H)_{\beta\overline{\tau}\overline{\alpha}}\big) \eta_{\overline{\beta}\overline{\gamma}}\eta_{\tau\gamma} e^{-f}dV_g
    \\&=\int_M \Big( -(\nabla^B_\alpha\eta)_{\beta\gamma}H_{\overline{\alpha}\overline{\beta}\tau}\eta_{\overline{\gamma}\overline{\tau}}-(\nabla^B_{\overline{\alpha}}\eta)_{\overline{\beta}\overline{\gamma}}H_{\alpha\beta\overline{\tau}}\eta_{\gamma\tau}+ (\nabla^B_{\overline{\alpha}}\eta)_{\beta\gamma}H_{\alpha\overline{\beta}\tau}\eta_{\overline{\gamma}\overline{\tau}}+(\nabla^B_{\alpha}\eta)_{\overline{\beta}\overline{\gamma}}H_{\overline{\alpha}\beta\overline{\tau}}\eta_{\gamma\tau}\Big)e^{-f}dV_g
    \\&\kern2em+\int_M \Big(\nabla_{\overline{\alpha}}f H_{\alpha\beta\overline{\tau}}\eta_{\overline{\beta}\overline{\gamma}}\eta_{\tau\gamma}-\nabla_{\alpha}f H_{\overline{\alpha}\beta\overline{\tau}}\eta_{\overline{\beta}\overline{\gamma}}\eta_{\tau\gamma} \Big)e^{-f}dV_g
    \\&=\int_M \Big( (\nabla^B_{\overline{\alpha}}\eta)_{\beta\gamma}H_{\alpha\overline{\beta}\tau}\eta_{\overline{\gamma}\overline{\tau}}+(\nabla^B_{\alpha}\eta)_{\overline{\beta}\overline{\gamma}}H_{\overline{\alpha}\beta\overline{\tau}}\eta_{\gamma\tau}\Big)e^{-f}dV_g+\int_M \Big(\nabla_{\overline{\alpha}}f H_{\alpha\beta\overline{\tau}}\eta_{\overline{\beta}\overline{\gamma}}\eta_{\tau\gamma}-\nabla_{\alpha}f H_{\overline{\alpha}\beta\overline{\tau}}\eta_{\overline{\beta}\overline{\gamma}}\eta_{\tau\gamma} \Big)e^{-f}dV_g
    \\&\kern2em +2\int_M H_2(\eta) e^{-f}dV_g
\end{align*}
Since
\begin{align*}
    &(\nabla^B_{\overline{\alpha}}\eta)_{\beta\gamma}H_{\alpha\overline{\beta}\tau}\eta_{\overline{\gamma}\overline{\tau}}=(\nabla_{\overline{\alpha}}\eta)_{\beta\gamma}H_{\alpha\overline{\beta}\tau}\eta_{\overline{\gamma}\overline{\tau}}-\frac{1}{2}H_1(\eta)+\frac{1}{2}H_3(\eta),
    \\&(\nabla^B_{\alpha}\eta)_{\beta\gamma}H_{\overline{\alpha}\overline{\beta}\tau}\eta_{\overline{\gamma}\overline{\tau}}=(\nabla_{\alpha}\eta)_{\beta\gamma}H_{\overline{\alpha}\overline{\beta}\tau}\eta_{\overline{\gamma}\overline{\tau}}-\frac{1}{2}H_2(\eta)+\frac{1}{2}H_3(\eta),
\end{align*}
 we compute
 \begin{align*}
     \int_M\big(&(\nabla_{\overline{\alpha}}H)_{\beta\overline{\tau}\alpha}-(\nabla_\alpha H)_{\beta\overline{\tau}\overline{\alpha}}\big) \eta_{\overline{\beta}\overline{\gamma}}\eta_{\tau\gamma} e^{-f}dV_g
    \\&=\int_M \Big( -(\nabla_\alpha\eta)_{\beta\gamma}H_{\overline{\alpha}\overline{\beta}\tau}\eta_{\overline{\gamma}\overline{\tau}}-(\nabla_{\overline{\alpha}}\eta)_{\overline{\beta}\overline{\gamma}}H_{\alpha\beta\overline{\tau}}\eta_{\gamma\tau}+ (\nabla_{\overline{\alpha}}\eta)_{\beta\gamma}H_{\alpha\overline{\beta}\tau}\eta_{\overline{\gamma}\overline{\tau}}+(\nabla_{\alpha}\eta)_{\overline{\beta}\overline{\gamma}}H_{\overline{\alpha}\beta\overline{\tau}}\eta_{\gamma\tau}\Big)e^{-f}dV_g
    \\&\kern2em+\int_M \Big(\nabla_{\overline{\alpha}}f H_{\alpha\beta\overline{\tau}}\eta_{\overline{\beta}\overline{\gamma}}\eta_{\tau\gamma}-\nabla_{\alpha}f H_{\overline{\alpha}\beta\overline{\tau}}\eta_{\overline{\beta}\overline{\gamma}}\eta_{\tau\gamma} \Big)e^{-f}dV_g
    \\&=\int_M \Big( -(\nabla^B_\alpha\eta)_{\beta\gamma}H_{\overline{\alpha}\overline{\beta}\tau}\eta_{\overline{\gamma}\overline{\tau}}-(\nabla^B_{\overline{\alpha}}\eta)_{\overline{\beta}\overline{\gamma}}H_{\alpha\beta\overline{\tau}}\eta_{\gamma\tau}+ (\nabla^B_{\overline{\alpha}}\eta)_{\beta\gamma}H_{\alpha\overline{\beta}\tau}\eta_{\overline{\gamma}\overline{\tau}}+(\nabla^B_{\alpha}\eta)_{\overline{\beta}\overline{\gamma}}H_{\overline{\alpha}\beta\overline{\tau}}\eta_{\gamma\tau}\Big)e^{-f}dV_g
    \\&\kern2em+\int_M \Big(\nabla_{\overline{\alpha}}f H_{\alpha\beta\overline{\tau}}\eta_{\overline{\beta}\overline{\gamma}}\eta_{\tau\gamma}-\nabla_{\alpha}f H_{\overline{\alpha}\beta\overline{\tau}}\eta_{\overline{\beta}\overline{\gamma}}\eta_{\tau\gamma} \Big)e^{-f}dV_g+\int_M \big(-H_2(\eta)+H_1(\eta)\big) e^{-f}dV_g
    \\&=\int_M \Big( (\nabla^B_{\overline{\alpha}}\eta)_{\beta\gamma}H_{\alpha\overline{\beta}\tau}\eta_{\overline{\gamma}\overline{\tau}}+(\nabla^B_{\alpha}\eta)_{\overline{\beta}\overline{\gamma}}H_{\overline{\alpha}\beta\overline{\tau}}\eta_{\gamma\tau}\Big)e^{-f}dV_g+\int_M \Big(\nabla_{\overline{\alpha}}f H_{\alpha\beta\overline{\tau}}\eta_{\overline{\beta}\overline{\gamma}}\eta_{\tau\gamma}-\nabla_{\alpha}f H_{\overline{\alpha}\beta\overline{\tau}}\eta_{\overline{\beta}\overline{\gamma}}\eta_{\tau\gamma} \Big)e^{-f}dV_g
    \\&\kern2em +\int_M \big(H_2(\eta)+H_1(\eta)\big) e^{-f}dV_g
 \end{align*}
Therefore, we conclude that 
\begin{align*}
    \int_M &\Big( (\nabla^B_{\overline{\alpha}}\eta)_{\beta\gamma}H_{\alpha\overline{\beta}\tau}\eta_{\overline{\gamma}\overline{\tau}}+(\nabla^B_{\alpha}\eta)_{\overline{\beta}\overline{\gamma}}H_{\overline{\alpha}\beta\overline{\tau}}\eta_{\gamma\tau}\Big)e^{-f}dV_g+\int_M \Big(\nabla_{\overline{\alpha}}f H_{\alpha\beta\overline{\tau}}\eta_{\overline{\beta}\overline{\gamma}}\eta_{\tau\gamma}-\nabla_{\alpha}f H_{\overline{\alpha}\beta\overline{\tau}}\eta_{\overline{\beta}\overline{\gamma}}\eta_{\tau\gamma} \Big)e^{-f}dV_g
    \\&=\int_M -2H_1(\eta) e^{-f}dV_g+\int_M (\mathcal{L}_{\theta^\sharp }g)_{\beta\overline{\tau}}\eta_{\overline{\beta}\overline{\gamma}}\eta_{\tau\gamma}e^{-f}dV_g.
\end{align*}
 and
\begin{align*}
    \int_M \big(\theta_{\overline{\delta}}H_{\beta\overline{\tau}\delta}-\theta_{\delta}H_{\beta\overline{\tau}\overline{\delta}}\big) \eta_{\overline{\beta}\overline{\gamma}}\eta_{\tau\gamma} e^{-f}dV_g=\int_M -H_1(\eta)e^{-f}dV_g.
\end{align*} 
\end{proof}

Next, we consider the Bismut curvature. 
\begin{lemma}
Let $(M,g,H,J,f)$ be a pluriclosed steady soliton. Then,
\begin{align*}
    R^B_{\alpha\overline{\alpha}\beta\overline{\tau}}=-\nabla_\beta\nabla_{\overline{\tau}}f-\frac{1}{2}(\mathcal{L}_{\theta^\sharp }g)_{\beta\overline{\tau}}-\theta_{\overline{\delta}}H_{\beta\overline{\tau}\delta}+\theta_{\delta}H_{\beta\overline{\tau}\overline{\delta}}-H_{\overline{\tau}\alpha\delta}H_{\beta\overline{\alpha}\overline{\delta}}.
\end{align*} 
Suppose $\eta$ is an anti-Hermitian, symmetric 2-tensor, satisfying (\ref{k2}), then
\begin{align}
    \int_M R^B_{\alpha\overline{\alpha}\beta\overline{\tau}}\eta_{\overline{\beta}\overline{\gamma}}\eta_{\tau\gamma}e^{-f}dV_g=\int_M (-\nabla_\beta\nabla_{\overline{\tau}}f)\eta_{\overline{\beta}\overline{\gamma}}\eta_{\tau\gamma}e^{-f}dV_g-\frac{1}{2}\int_M (\mathcal{L}_{\theta^\sharp }g)_{\beta\overline{\tau}}\eta_{\overline{\beta}\overline{\gamma}}\eta_{\tau\gamma}e^{-f}dV_g+\int_M \big( H_1(\eta)-H_2(\eta) \big) e^{-f}dV_g. \label{pp3}
\end{align}
\end{lemma}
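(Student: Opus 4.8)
The plan is to prove the pointwise curvature identity first and then deduce the integral identity by contracting with $\eta_{\overline{\beta}\overline{\gamma}}\eta_{\tau\gamma}$ and integrating against $e^{-f}dV_g$. For the pointwise formula I would apply the first Bianchi identity (\ref{10}) for the Bismut connection to the cyclic triple $(\partial_\alpha,\partial_{\overline{\alpha}},\partial_\beta)$ with $\partial_{\overline{\tau}}$ in the fourth slot, summing over $\alpha$, in the coordinates of \Cref{P6}. The cyclic sum on the left is $R^B_{\alpha\overline{\alpha}\beta\overline{\tau}}+R^B_{\overline{\alpha}\beta\alpha\overline{\tau}}+R^B_{\beta\alpha\overline{\alpha}\overline{\tau}}$. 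Since $\nabla^B$ is Hermitian, $R^B(X,Y)$ commutes with $J$, so $R^B_{ijkl}$ vanishes whenever its last two indices have the same holomorphic type; this kills $R^B_{\beta\alpha\overline{\alpha}\overline{\tau}}$ and, after using the antisymmetry of $R^B$ in its last pair, identifies $\sum_\alpha R^B_{\overline{\alpha}\beta\alpha\overline{\tau}}$ with $-R^B_{\beta\overline{\tau}}$, the Bismut--Ricci curvature $\Rc^B(\partial_\beta,\partial_{\overline{\tau}})$.

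On the right-hand side of (\ref{10}) the derivative term $\sum_\alpha(\nabla^B_{\overline{\tau}}H)_{\alpha\overline{\alpha}\beta}$ equals $-(\nabla^B_{\overline{\tau}}\theta)_\beta$: because $\nabla^B$ preserves both the metric and the type decomposition it commutes with the Hermitian half-trace, and (\ref{28}) gives $\sum_\alpha H_{\alpha\overline{\alpha}\beta}=-\theta_\beta$. The three quadratic terms $\sum_\sigma g(H(\cdot,\cdot),H(\cdot,\cdot))$ simplify through (\ref{28}) (which feeds in $\theta$) and the fact that $H\in\Lambda^{2,1}\oplus\Lambda^{1,2}$ has no $(3,0)$ or $(0,3)$ part. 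To finish I would convert the Bismut quantities into Levi-Civita ones: the soliton equation (\ref{s}) written as $\Rc^B=-(\nabla^B)^2f$ gives $R^B_{\beta\overline{\tau}}=-\nabla^B_\beta\nabla^B_{\overline{\tau}}f$, and (\ref{NT}) gives $(\nabla^B_{\overline{\tau}}\theta)_\beta=\nabla^B_{\overline{\tau}}\nabla^B_\beta f+\rho_B(\partial_{\overline{\tau}},J\partial_\beta)$. Since the symmetric part of the Bismut Hessian of a function is the Levi-Civita Hessian, the two Bismut Hessians combine into $-2\nabla_\beta\nabla_{\overline{\tau}}f$; the remaining $\rho_B$ term is rewritten by (\ref{23}) together with the scalar soliton identity $\Rc-\tfrac14H^2+\nabla^2f=0$ as $\nabla_\beta\nabla_{\overline{\tau}}f-\tfrac12(\mathcal{L}_{\theta^\sharp}g)_{\beta\overline{\tau}}$, so the Hessian coefficients collapse to $-\nabla_\beta\nabla_{\overline{\tau}}f-\tfrac12(\mathcal{L}_{\theta^\sharp}g)_{\beta\overline{\tau}}$. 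The $\theta$-$H$ pieces give $-\theta_{\overline{\delta}}H_{\beta\overline{\tau}\delta}+\theta_\delta H_{\beta\overline{\tau}\overline{\delta}}$, and the three quadratic $H$-terms collapse to the single term $-H_{\overline{\tau}\alpha\delta}H_{\beta\overline{\alpha}\overline{\delta}}$, two of them cancelling after relabelling the dummy indices via the total antisymmetry of $H$.

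For the integral identity I would contract the pointwise formula with $\eta_{\overline{\beta}\overline{\gamma}}\eta_{\tau\gamma}$ and integrate. The Hessian and Lie-derivative terms reproduce the first two terms of (\ref{pp3}) verbatim. The two Lee-form terms integrate to $+H_1(\eta)$ by (\ref{pp2}), while the quadratic term $-H_{\overline{\tau}\alpha\delta}H_{\beta\overline{\alpha}\overline{\delta}}\eta_{\overline{\beta}\overline{\gamma}}\eta_{\tau\gamma}$ equals $-H_2(\eta)$ pointwise after a cyclic relabelling of the $H$-factors matching the definition of $H_2(\eta)$. Together these yield $\int_M\big(H_1(\eta)-H_2(\eta)\big)e^{-f}dV_g$, as claimed.

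The hard part is the pointwise step. Because the Bismut connection carries torsion its curvature lacks the pair symmetry, so the trace over the last pair (which is $\rho_B(\partial_\beta,J\partial_{\overline{\tau}})$ by (\ref{29})), the Bismut--Ricci trace $R^B_{\beta\overline{\tau}}$, and the target trace $\sum_\alpha R^B_{\alpha\overline{\alpha}\beta\overline{\tau}}$ over the first pair are genuinely distinct, and keeping the $\nabla^B H$ and quadratic-$H$ terms from (\ref{10}) under control is delicate. The crucial cancellation is the one turning the naive coefficient $-2\nabla_\beta\nabla_{\overline{\tau}}f$ from the two Bismut Hessians into the correct $-\nabla_\beta\nabla_{\overline{\tau}}f$, via the $\rho_B$-contribution of (\ref{NT}) and the scalar soliton identity; once the pointwise formula is in hand, the integral step is essentially routine given (\ref{pp2}) and the definitions of $H_1,H_2$.
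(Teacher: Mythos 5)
Your argument is correct and reproduces both the pointwise formula and the integral identity, but it finishes the pointwise step by a genuinely different route than the paper. Both you and the paper begin identically: apply the Bianchi identity (\ref{10}) to $(\partial_\alpha,\partial_{\overline{\alpha}},\partial_\beta)$ with $\partial_{\overline{\tau}}$ in the last slot, sum over $\alpha$, kill $R^B_{\beta\alpha\overline{\alpha}\overline{\tau}}$ by the type argument, identify $\sum_\alpha R^B_{\overline{\alpha}\beta\alpha\overline{\tau}}=-R^B_{\beta\overline{\tau}}$, and reduce the quadratic torsion terms via (\ref{28}) and the absence of $(3,0)+(0,3)$ components of $H$, with two contractions cancelling to leave $-H_{\overline{\tau}\alpha\delta}H_{\beta\overline{\alpha}\overline{\delta}}$. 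The divergence is in converting the Bismut quantities $R^B_{\beta\overline{\tau}}-(\nabla^B_{\overline{\tau}}\theta)_\beta$ into Levi--Civita ones: the paper applies the Bianchi identity a \emph{second} time with the roles of $\beta$ and $\overline{\tau}$ interchanged, obtains the alternative expression $R^B_{\overline{\tau}\beta}-(\nabla^B_\beta\theta)_{\overline{\tau}}$ for the same quantity, and averages the two, so that only the soliton equation (\ref{s}) and the symmetry of $\nabla^2 f$ and $\mathcal{L}_{\theta^\sharp}g$ enter; you instead use a single Bianchi application and import Proposition (\ref{NT}), $\nabla^B(\theta-df)=\rho_B\circ J$, together with (\ref{23}) and (\ref{s}), to rewrite $(\nabla^B_{\overline{\tau}}\theta)_\beta=\nabla^B_{\overline{\tau}}\nabla^B_\beta f+\rho_B(\partial_{\overline{\tau}},J\partial_\beta)$ and then eliminate $\rho_B$. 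Your route is shorter, avoiding the second Bianchi computation, at the cost of leaning on (\ref{NT}), which the paper established from the holomorphy and Killing properties (\ref{V}); the paper's symmetrization argument is self-contained modulo the Bianchi identity. The integral step is identical in the two arguments: contract with $\eta_{\overline{\beta}\overline{\gamma}}\eta_{\tau\gamma}$, apply (\ref{pp2}) to the Lee-form terms to get $+H_1(\eta)$, and relabel indices to identify $-H_{\overline{\tau}\alpha\delta}H_{\beta\overline{\alpha}\overline{\delta}}\eta_{\overline{\beta}\overline{\gamma}}\eta_{\tau\gamma}$ with $-H_2(\eta)$. One cosmetic slip: what you call the ``scalar soliton identity'' $\Rc-\tfrac{1}{4}H^2+\nabla^2f=0$ is the full tensor equation in (\ref{s}), not a scalar one; this does not affect the argument.
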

\begin{proof}
Using the Bianchi identity (\ref{10}),
\begin{align*}
    R^B_{\alpha\overline{\alpha}\beta\overline{\tau}}+R^B_{\overline{\alpha}\beta\alpha\overline{\tau}}+R^B_{\beta\alpha\overline{\alpha}\overline{\tau}}&=(\nabla^B_{\overline{\tau}} H)_{\alpha\overline{\alpha}\beta} -\langle H(\partial_\alpha,\partial_{\overline{\alpha}}),H(\partial_\beta,\partial_{\overline{\tau}})  \rangle-\langle H(\partial_{\overline{\alpha}},\partial_{\beta}),H(\partial_\alpha,\partial_{\overline{\tau}})  \rangle-\langle H(\partial_\beta,\partial_{\alpha}),H(\partial_{\overline{\alpha}},\partial_{\overline{\tau}})  \rangle
    \\&=-(\nabla^B_{\overline{\tau}}\theta)_\beta-\theta_{\overline{\delta}}H_{\beta\overline{\tau}\delta}+\theta_{\delta}H_{\beta\overline{\tau}\overline{\delta}}-H_{\overline{\tau}\alpha\delta}H_{\beta\overline{\alpha}\overline{\delta}}.
\end{align*}
Similarly, 
\begin{align*}
R^B_{\alpha\overline{\alpha}\overline{\tau}\beta}+R^B_{\overline{\alpha}\overline{\tau}\alpha\beta}+R^B_{\overline{\tau}\overline{\alpha}\alpha\beta}&=(\nabla^B_{\beta}\theta)_{\overline{\tau}}-\big(\theta_{\overline{\delta}}H_{\beta\overline{\tau}\delta}+\theta_{\delta}H_{\beta\overline{\tau}\overline{\delta}}-H_{\overline{\tau}\alpha\delta}H_{\beta\overline{\alpha}\overline{\delta}}\big).
\end{align*}
Thus,
\begin{align*}
    R^B_{\alpha\overline{\alpha}\beta\overline{\tau}}&=R^B_{\beta\overline{\tau}}-(\nabla^B_{\overline{\tau}}\theta)_\beta-\big(\theta_{\overline{\delta}}H_{\beta\overline{\tau}\delta}-\theta_{\delta}H_{\beta\overline{\tau}\overline{\delta}}+H_{\overline{\tau}\alpha\delta}H_{\beta\overline{\alpha}\overline{\delta}}\big)
    \\&=R^B_{\overline{\tau}\beta}-(\nabla^B_{\beta}\theta)_{\overline{\tau}}-\big(\theta_{\overline{\delta}}H_{\beta\overline{\tau}\delta}-\theta_{\delta}H_{\beta\overline{\tau}\overline{\delta}}+H_{\overline{\tau}\alpha\delta}H_{\beta\overline{\alpha}\overline{\delta}}\big)
    \\&=-\nabla_\beta\nabla_{\overline{\tau}}f-\frac{1}{2}(\mathcal{L}_{\theta^\sharp }g)_{\beta\overline{\tau}}-\theta_{\overline{\delta}}H_{\beta\overline{\tau}\delta}+\theta_{\delta}H_{\beta\overline{\tau}\overline{\delta}}-H_{\overline{\tau}\alpha\delta}H_{\beta\overline{\alpha}\overline{\delta}}
\end{align*}
where we used the formula (\ref{s}). The next result is followed by (\ref{pp2}).

\end{proof}

\begin{lemma}
Suppose $(M,g,H,J,f)$ is a pluriclosed steady soliton and $\eta$ is an anti-Hermitian, symmetric 2-tensor, $\divg^B_f\eta=0$, satisfying (\ref{k2}). Then,    
\begin{align}
    \nonumber\int_M \langle R^B(\eta),\eta \rangle e^{-f}dV_g&=\int_M 2(\nabla^B_\alpha\eta)_{\beta\gamma}(\nabla^B_{\overline{\alpha}}\eta)_{\overline{\beta}\overline{\gamma}} e^{-f}dV_g+\int_M \big(2H_1(\eta)-2H_2(\eta)\big)e^{-f}dV_g
    \\&\kern2em-\int_M (\mathcal{L}_{\theta^\sharp }g)_{\beta\overline{\tau}}\eta_{\overline{\beta}\overline{\gamma}}\eta_{\tau\gamma}e^{-f}dV_g+ \int_M \Big(\nabla_{\overline{\alpha}}f H_{\alpha\beta\overline{\tau}}\eta_{\overline{\beta}\overline{\gamma}}\eta_{\tau\gamma}-\nabla_{\alpha}f H_{\overline{\alpha}\beta\overline{\tau}}\eta_{\overline{\beta}\overline{\gamma}}\eta_{\tau\gamma} \Big)e^{-f}dV_g. \label{ll2}
\end{align}
\end{lemma}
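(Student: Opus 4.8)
The plan is to establish (\ref{ll2}) as a Bochner-type identity, computed entirely in the special Hermitian coordinates of \Cref{P6}, and to reduce everything to the auxiliary identities already proved. First I would rewrite the left-hand side in complex coordinates. Since $\eta$ is anti-Hermitian, only the components $\eta_{\alpha\beta}$ and $\eta_{\overline\alpha\overline\beta}$ survive, and since $\nabla^B$ is a Hermitian connection the endomorphism $R^B(\cdot,\cdot)$ commutes with $J$, so in $\langle \mathring R^B(\eta),\eta\rangle = R^B_{iklj}\eta_{ij}\eta_{kl}$ the curvature can be nonzero only when its last two slots $l,j$ are of opposite type. Combining these two facts kills all but two conjugate families of terms, giving
\[ \int_M\langle \mathring R^B(\eta),\eta\rangle e^{-f}dV_g = \int_M\big(R^B_{\alpha\overline\gamma\overline\delta\beta}\eta_{\alpha\beta}\eta_{\overline\gamma\overline\delta} + \text{c.c.}\big)e^{-f}dV_g, \]
which, using the skew-symmetry of $R^B$ in its last two entries, I would rewrite with $-R^B_{\alpha\overline\gamma\beta\overline\delta}$.

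The core step is to exchange this full Bismut curvature for a traced (Ricci-type) curvature plus covariant derivatives of $\eta$. I would feed the reduced contraction into the first Bianchi identity (\ref{10}): the permuted curvature terms either vanish after contraction against the symmetric tensors $\eta_{\alpha\beta},\eta_{\overline\gamma\overline\delta}$, or collapse —after applying the commutation formula (\ref{cf}) to the identically vanishing mixed components $\eta_{\beta\overline\delta}$, which yields purely algebraic relations among the curvature components— into the traced curvature $R^B_{\alpha\overline\alpha\beta\overline\tau}$ appearing in (\ref{pp3}). The accompanying $\nabla^B H$ terms I would then integrate by parts, transferring the derivative onto the two factors of $\eta$; this produces the Bochner square $\int(\nabla^B_\alpha\eta)_{\beta\gamma}(\nabla^B_{\overline\alpha}\eta)_{\overline\beta\overline\gamma}e^{-f}dV_g$ together with the twisted-measure gradient terms in $\nabla f$ and, from the torsion of $\nabla^B$, Lee-form terms governed by (\ref{28}). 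Throughout I would normalize every contraction of schematic shape $H\cdot H\cdot\eta\cdot\eta$ and $H\cdot\nabla^B\eta\cdot\eta$ into the three real quantities $H_1(\eta),H_2(\eta),H_3(\eta)$ by means of the symmetrized constraint (\ref{ab}) and the relations (\ref{ll}) (in particular $H_2(\eta)=-2H_3(\eta)$), and I would use the divergence-free condition $\divg^B_f\eta=0$ to collapse the traced derivative contractions.

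Finally I would substitute the two preparatory identities. Equation (\ref{pp3}) trades $\int R^B_{\alpha\overline\alpha\beta\overline\tau}\eta_{\overline\beta\overline\gamma}\eta_{\tau\gamma}e^{-f}$ for the Hessian $\nabla_\beta\nabla_{\overline\tau}f$, the term $(\mathcal{L}_{\theta^\sharp}g)_{\beta\overline\tau}$, and $H_1-H_2$; here the Hessian contribution is arranged to cancel against the $(\nabla^B)^2f$ generated by the Bochner integration by parts, via the soliton relation $\Rc^B=-(\nabla^B)^2f$, so that no $f$-Hessian survives in (\ref{ll2}). Equation (\ref{pp1}) then absorbs the remaining $\nabla f\cdot H$ contractions and one copy of $(\mathcal{L}_{\theta^\sharp}g)$, leaving exactly the stated right-hand side, with the Lee-form coefficient $-1$ emerging as the difference between the $+1$ of (\ref{pp1}) and the $-\tfrac12$ (doubled) of (\ref{pp3}).

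The main obstacle is the bookkeeping of the torsion corrections rather than any single conceptual step: because $\nabla^B$ has torsion $H$, every integration by parts produces Lee-form terms through (\ref{28}), and the several distinct $H$-quadratic and $H\cdot\nabla^B\eta$ contractions must be shown to coalesce into precisely $2H_1(\eta)-2H_2(\eta)$ while tracking the correct $\mathcal{L}_{\theta^\sharp}g$ and $\nabla f\cdot H$ coefficients. This delicate combinatorial cancellation is exactly what the auxiliary identities (\ref{ab}), (\ref{ll}), (\ref{pp1}) and (\ref{pp3}) were isolated beforehand to control.
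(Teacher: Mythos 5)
There is a genuine gap at the core of your argument: the first Bianchi identity (\ref{10}) cannot do the job you assign to it. In your reduced contraction the curvature is paired as $R^B_{\overline{\gamma}\alpha\beta\overline{\delta}}\,\eta_{\alpha\beta}\eta_{\overline{\gamma}\overline{\delta}}$, with \emph{all four} curvature slots contracted against indices of $\eta$. Now take the cyclic sum over the first three slots $(\alpha,\beta,\overline{\gamma})$: the term $R^B_{\alpha\beta\overline{\gamma}\overline{\delta}}$ vanishes because $\nabla^BJ=0$ forces the curvature to be of type $(1,1)$ in its last two slots, and the remaining two terms cancel each other identically, since relabelling $\alpha\leftrightarrow\beta$ and using the symmetry of $\eta$ together with the skew-symmetry of $R^B$ in its first two slots gives
\begin{align*}
R^B_{\beta\overline{\gamma}\alpha\overline{\delta}}\,\eta_{\alpha\beta}\eta_{\overline{\gamma}\overline{\delta}}
= R^B_{\alpha\overline{\gamma}\beta\overline{\delta}}\,\eta_{\alpha\beta}\eta_{\overline{\gamma}\overline{\delta}}
= -R^B_{\overline{\gamma}\alpha\beta\overline{\delta}}\,\eta_{\alpha\beta}\eta_{\overline{\gamma}\overline{\delta}}.
\end{align*}
So when tested against the symmetric tensor $\eta\otimes\eta$ the Bianchi identity degenerates into a pure torsion identity ($0=\nabla^BH$- and $H^2$-terms); it says nothing about the curvature contraction you are trying to compute. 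This is precisely why the Bianchi trick works in the cross-term Lemma \ref{L6} (there $\xi$ is skew, so the permuted terms reproduce $\langle\mathring{R}^B(\xi),\eta\circ J\rangle$ with a sign) but is unavailable here. Your fallback — applying (\ref{cf}) to the vanishing mixed components $\eta_{\beta\overline{\delta}}$ — only yields the slot-exchange relation $R^B_{ij\beta\overline{\tau}}\eta_{\overline{\tau}\overline{\delta}}=-R^B_{ij\overline{\delta}\tau}\eta_{\beta\tau}$, which exchanges the third and fourth slots; it can never produce the traced curvature $R^B_{\alpha\overline{\alpha}\beta\overline{\tau}}$, because a trace requires contracting the curvature's first two slots against \emph{each other}, and moving $\eta$-indices into that position needs the pair symmetry $R_{ijkl}=R_{klij}$, which the Bismut curvature fails to satisfy exactly by $\nabla^BH$-terms.

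The correct engine — and the one the paper uses — runs in the opposite direction: one starts from the mixed Dirichlet integral $\int_M(\nabla^B_\alpha\eta)_{\beta\gamma}(\nabla^B_{\overline{\beta}}\eta)_{\overline{\alpha}\overline{\gamma}}e^{-f}dV_g$, integrates by parts, and applies the commutation formula (\ref{cf}) to the \emph{nonvanishing} components $(\nabla^B_{\overline{\beta}}\nabla^B_\alpha\eta)_{\beta\gamma}$; it is this commutation, combined with $\divg^B_f\eta=0$ and the soliton identity $\Rc^B=-(\nabla^B)^2f$, that generates $\langle\mathring{R}^B(\eta),\eta\rangle$ together with the $H\cdot\nabla^B\eta\cdot\eta$ terms. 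Those torsion terms are then converted by (\ref{ll}) and (\ref{pp1}) into $H_1,H_2$, the Lee-form term, and the $\nabla f\cdot H$ terms, and finally (\ref{k2}) trades the crossed contraction for the straight one $\int_M(\nabla^B_\alpha\eta)_{\beta\gamma}(\nabla^B_{\overline{\alpha}}\eta)_{\overline{\beta}\overline{\gamma}}e^{-f}dV_g$ at the cost of $-2H_2(\eta)$. Note also that (\ref{pp3}) plays no role in the proof of (\ref{ll2}) — it enters only later in the main theorem — so your planned cancellation of an $f$-Hessian against a traced curvature is solving a problem your (unworkable) first step created, rather than one present in the actual identity.
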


\begin{proof}
Using the integration by part and the commutator formula (\ref{cf}), 
\begin{align*}
    \int_M (\nabla^B_\alpha\eta)_{\beta\gamma}(\nabla^B_{\overline{\beta}}\eta)_{\overline{\alpha}\overline{\gamma}} e^{-f}dV_g&=\int_M -(\nabla^B_{\overline{\beta}}\nabla^B_\alpha\eta)_{\beta\gamma}\eta_{\overline{\alpha}\overline{\gamma}} e^{-f}dV_g+\int_M (\nabla^B_\alpha\eta)_{\beta\gamma}\eta_{\overline{\alpha}\overline{\gamma}}\nabla_{\overline{\beta}}f e^{-f}dV_g
    \\&=\int_M -\Big( (\nabla^B_{\alpha}\nabla^B_{\overline{\beta}}\eta)_{\beta\gamma}-R^B_{\overline{\beta}\alpha\beta\overline{\tau}}\eta_{\tau\gamma}-R^B_{\overline{\beta}\alpha\gamma\overline{\tau}}\eta_{\beta\tau}-H_{\overline{\beta}\alpha\tau}(\nabla^B_{\overline{\tau}}\eta)_{\beta\gamma}-H_{\overline{\beta}\alpha\overline{\tau}}(\nabla^B_{\tau}\eta)_{\beta\gamma} \Big)\eta_{\overline{\alpha}\overline{\gamma}} e^{-f}dV_g
    \\&\kern2em -\int_M \nabla^B_\alpha\nabla^B_{\overline{\beta}}f \eta_{\overline{\alpha}\overline{\gamma}}\eta_{\beta\gamma} e^{-f}dV_g
    \\&=\int_M \Big( \mathring{R}^B(\eta)_{\alpha\gamma}+H_{\tau\overline{\beta}\alpha}(\nabla^B_{\overline{\tau}}\eta)_{\beta\gamma}+H_{\overline{\tau}\overline{\beta}\alpha}(\nabla^B_{\tau}\eta)_{\beta\gamma} \Big)\eta_{\overline{\alpha}\overline{\gamma}}e^{-f}dV_g. 
\end{align*}
By (\ref{ll}) and (\ref{pp1}), we have
\begin{align*}
    \int_M \langle R^B(\eta),\eta \rangle e^{-f}dV_g&=\int_M 2(\nabla^B_\alpha\eta)_{\beta\gamma}(\nabla^B_{\overline{\beta}}\eta)_{\overline{\alpha}\overline{\gamma}} e^{-f}dV_g
    \\&\kern2em -\int_M\Big(H_{\tau\overline{\beta}\alpha}(\nabla^B_{\overline{\tau}}\eta)_{\beta\gamma}\eta_{\overline{\alpha}\overline{\gamma}}+H_{\overline{\tau}\overline{\beta}\alpha}(\nabla^B_{\tau}\eta)_{\beta\gamma}\eta_{\overline{\alpha}\overline{\gamma}}+H_{\tau\beta\overline{\alpha}}(\nabla^B_{\overline{\tau}}\eta)_{\overline{\beta}\overline{\gamma}}\eta_{\alpha\gamma}+H_{\overline{\tau}\beta\overline{\alpha}}(\nabla^B_{\tau}\eta)_{\overline{\beta}\overline{\gamma}}\eta_{\alpha\gamma}   \Big) e^{-f}dV_g
    \\&=\int_M 2(\nabla^B_\alpha\eta)_{\beta\gamma}(\nabla^B_{\overline{\beta}}\eta)_{\overline{\alpha}\overline{\gamma}} e^{-f}dV_g+\int_M \big(2H_1(\eta)+2H_2(\eta) \big) e^{-f}dV_g-\int_M (\mathcal{L}_{\theta^\sharp }g)_{\beta\overline{\tau}}\eta_{\overline{\beta}\overline{\gamma}}\eta_{\tau\gamma}e^{-f}dV_g
    \\&\kern2em+ \int_M \Big(\nabla_{\overline{\alpha}}f H_{\alpha\beta\overline{\tau}}\eta_{\overline{\beta}\overline{\gamma}}\eta_{\tau\gamma}-\nabla_{\alpha}f H_{\overline{\alpha}\beta\overline{\tau}}\eta_{\overline{\beta}\overline{\gamma}}\eta_{\tau\gamma} \Big)e^{-f}dV_g.
\end{align*}
Note that 
\begin{align*}
    \int_M (\nabla^B_\alpha\eta)_{\beta\gamma}(\nabla^B_{\overline{\beta}}\eta)_{\overline{\alpha}\overline{\gamma}}e^{-f}dV_g&=\int_M (\nabla^B_\alpha\eta)_{\beta\gamma}\Big((\nabla^B_{\overline{\alpha}}\eta)_{\overline{\beta}\overline{\gamma}}+H_{\overline{\alpha}\overline{\beta}\tau}\eta_{\overline{\tau}\overline{\gamma}}-H_{\overline{\beta}\overline{\gamma}\tau}\eta_{\overline{\tau}\overline{\alpha}}-H_{\overline{\gamma}\overline{\alpha}\tau}\eta_{\overline{\tau}\overline{\beta}}\Big) e^{-f}dV_g
    \\&=\int_M (\nabla^B_\alpha\eta)_{\beta\gamma}(\nabla^B_{\overline{\alpha}}\eta)_{\overline{\beta}\overline{\gamma}}e^{-f}dV_g+\int_M 2(\nabla^B_\alpha\eta)_{\beta\gamma}H_{\overline{\alpha}\overline{\beta}\tau}\eta_{\overline{\tau}\overline{\gamma}}e^{-f}dV_g
    \\&=\int_M (\nabla^B_\alpha\eta)_{\beta\gamma}(\nabla^B_{\overline{\alpha}}\eta)_{\overline{\beta}\overline{\gamma}}e^{-f}dV_g-\int_M 2H_2(\eta) e^{-f}dV_g
\end{align*}
so we complete the proof.
\end{proof}

Now, we are ready to prove the main result.
\begin{theorem}
Suppose $(M,g,H,J,f)$ is a pluriclosed steady soliton and $\eta$ is an anti-Hermitian, symmetric 2-tensor, $\divg^B_f\eta=0$ satisfying (\ref{k2}). Then,
\begin{align}
\int_M\langle \eta, \overline{\mathcal{L}}_f(\eta)   \rangle   e^{-f}dV_g&=-\frac{1}{6}\|C\|_f^2 +\int_M |\eta|^2\big( -\frac{1}{2}S_B+\mathcal{L}_Vf \big)e^{-f}dV_g,
\end{align}
where $\|\cdot\|_f$ denotes the norm of $f$-twisted $L^2$ inner product (\ref{6}), $C$ is defined in (\ref{C}), $S_B=\tr_\omega \rho_B$ denotes the Bismut scalar curvature and the vector field $V=\frac{1}{2}(\theta^\sharp-\nabla f)$.
\end{theorem}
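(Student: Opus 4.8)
The plan is to expand the integrand using the real-coordinate form of the stability operator,
$\overline{\mathcal{L}}_f(\eta)_{ij}=\tfrac12\triangle^B_f\eta_{ij}+\mathring{R}^B(\eta)_{ij}-\nabla^B_m\eta_{kj}H_{mki}-\tfrac12 H^2_{ki}\eta_{kj}$,
and to evaluate the four resulting integrals
$(A)=\tfrac12\int\langle\triangle^B_f\eta,\eta\rangle e^{-f}dV_g$,
$(B)=\int\langle\mathring{R}^B(\eta),\eta\rangle e^{-f}dV_g$,
$(C)=-\int\nabla^B_m\eta_{kj}H_{mki}\eta_{ij}\,e^{-f}dV_g$, and
$(D)=-\tfrac12\int H^2_{ki}\eta_{kj}\eta_{ij}\,e^{-f}dV_g$
one at a time in the special complex coordinates of \Cref{P6}, in which $\eta$ is anti-Hermitian and hence carries only the components $\eta_{\alpha\beta},\eta_{\overline\alpha\overline\beta}$, and $H$ only the components of type $(2,1)+(1,2)$. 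Throughout, the method is to integrate by parts against the $f$-twisted measure and to feed each contraction into the already-established identities (\ref{ll2}), (\ref{pp1})--(\ref{pp3}), (\ref{ll}), together with the pointwise cubic constraint (\ref{ab}) that the symmetry of $\eta$ imposes on (\ref{k2}).

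First I would dispose of the second-order terms. Since $\triangle^B_f=-(\nabla^B)^{*_f}\nabla^B$, integration by parts gives $(A)=-\tfrac12\|\nabla^B\eta\|_f^2$, and writing the norm in the coordinates of \Cref{P6} splits it, via the conjugation pairing $g_{\alpha\overline\beta}=\delta_{\alpha\beta}$, as $\|\nabla^B\eta\|_f^2=2(P+Q)$, where $P=\int(\nabla^B_\alpha\eta)_{\beta\gamma}(\nabla^B_{\overline\alpha}\eta)_{\overline\beta\overline\gamma}e^{-f}dV_g$ is the holomorphic and $Q=\int(\nabla^B_{\overline\alpha}\eta)_{\beta\gamma}(\nabla^B_\alpha\eta)_{\overline\beta\overline\gamma}e^{-f}dV_g$ the antiholomorphic gradient energy. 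For $(B)$ I apply (\ref{ll2}) directly, which rewrites it as $2P$ plus the zeroth-order pieces $2H_1-2H_2$, the Lie-derivative contraction $-(\mathcal{L}_{\theta^\sharp}g)_{\beta\overline\tau}\eta_{\overline\beta\overline\gamma}\eta_{\tau\gamma}$, and the two $\nabla f$-contractions. Thus $(A)+(B)$ still carries the stray gradient energy $P-Q$, which must be reduced to lower order: integrating by parts one derivative and commuting with (\ref{cf}), the torsion of $\nabla^B$ and the Bismut curvature convert $P-Q$ into a combination of curvature and $H$-quadratic contractions.

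The heart of the computation is then the treatment of $(C)$ and $(D)$. Expanding $(C)$ in the coordinates of \Cref{P6} and integrating the single Bismut derivative by parts, I use (\ref{pp1}) to trade the gradient--$H$--$\eta$ contractions for $H_1$, for the Lie-derivative contraction and for the $\nabla f$-contractions, and (\ref{ll}) to collapse terms such as $(\nabla^B_\alpha\eta)_{\beta\gamma}H_{\overline\alpha\overline\beta\tau}\eta_{\overline\gamma\overline\tau}=-H_2(\eta)$. The purely algebraic term $(D)$ expands, after raising indices, into a fixed linear combination of $H_1,H_2,H_3$. Adding $(A)+(B)+(C)+(D)$, all gradient energies cancel, and one is left with (i) a quadratic-in-$H$ expression in $H_1,H_2,H_3$, (ii) the Lie-derivative contraction $(\mathcal{L}_{\theta^\sharp}g)_{\beta\overline\tau}\eta_{\overline\beta\overline\gamma}\eta_{\tau\gamma}$ and the Hessian contraction $(\nabla_\beta\nabla_{\overline\tau}f)\eta_{\overline\beta\overline\gamma}\eta_{\tau\gamma}$, and (iii) the $\theta$- and $\nabla f$-contractions.

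It remains to recognize the two terms of the claim, and here the main difficulty lies. For the $H$-quadratic part, I would expand $\|C\|_f^2$ using the total antisymmetry of $C$ to reduce it to a multiple of $\int C_{ijk}H_{ijl}(\eta\circ J)_{lk}e^{-f}dV_g$, rewrite it in complex coordinates via $(\eta\circ J)_{\alpha\beta}=\sqrt{-1}\,\eta_{\alpha\beta}$, and simplify with (\ref{ll}) ($H_2=-2H_3$) and the cubic constraint (\ref{ab}); the assertion is that the leftover combination of $H_1,H_2,H_3$ coming from $(B),(C),(D)$ is exactly $-\tfrac16\|C\|_f^2$, and pinning down the numerical coefficients in this matching is the most delicate bookkeeping. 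For the curvature/$f$ part, I feed the Lie-derivative and Hessian contractions into (\ref{pp3}), which reintroduces the partial curvature trace $R^B_{\alpha\overline\alpha\beta\overline\tau}\eta_{\overline\beta\overline\gamma}\eta_{\tau\gamma}$; using the soliton equation (\ref{s}) that $\Rc^B=-(\nabla^B)^2f$, the Bianchi-derived identity preceding (\ref{pp3}), and the expressions (\ref{28})--(\ref{29}) for $\theta$ and $\rho_B$, these collapse to $-\tfrac12 S_B\,|\eta|^2$ with $S_B=\tr_\omega\rho_B$, while the identity $V=\tfrac12(\theta^\sharp-\nabla f)$ recombines the remaining $\theta$- and $\nabla f$-contractions into $(\mathcal{L}_Vf)\,|\eta|^2$. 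The genuine crux is verifying that every contraction which is \emph{not} a multiple of $|\eta|^2=2\eta_{\alpha\beta}\eta_{\overline\alpha\overline\beta}$ cancels, so that only the scalar-curvature and $\mathcal{L}_Vf$ terms survive. As a consistency check, in the Bismut--Hermitian--Einstein case $\rho_B=0$ forces $S_B=0$, and \Cref{LV} gives $\mathcal{L}_Vf=0$, so the formula collapses to $-\tfrac16\|C\|_f^2$, in agreement with the stability statement.
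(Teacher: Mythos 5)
Your plan is essentially the paper's own proof: the paper starts from $\int_M\langle\eta,\overline{\mathcal{L}}_f(\eta)\rangle e^{-f}dV_g=\int_M\big(-\tfrac12|\overline{\nabla}\eta|^2+\langle\mathring{R}^B(\eta),\eta\rangle\big)e^{-f}dV_g$ and expands the mixed-connection norm via (\ref{MBC}) into exactly your pieces $(A)$, $(C)$, $(D)$ plus $H_1,H_2$ terms, then runs the identical reduction you describe --- splitting the gradient energy into holomorphic and antiholomorphic parts, converting the antiholomorphic part by integration by parts and the commutator (\ref{cf}) together with (\ref{k2}), (\ref{28}), (\ref{pp2}), (\ref{pp3}), invoking (\ref{ll2}), (\ref{pp1}), (\ref{ll}) for the curvature and torsion contractions, and finishing with $|C|^2=12H_1(\eta)-6H_2(\eta)$ and $S_B=-d^*\theta-\triangle f$. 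The coefficient bookkeeping you defer is precisely what the paper's computation supplies, and it closes up exactly as you predict, so your outline is a faithful (slightly reorganized, $\triangle^B_f$-first rather than $\overline{\triangle}_f$-first) version of the same argument.
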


\begin{proof}
We notice that 
\begin{align*}
    \int_M\langle \eta, \overline{\mathcal{L}}_f(\eta)   \rangle   e^{-f}dV_g&=\int_M \Big( -\frac{1}{2}|\overline{\nabla}\eta|^2+\langle \mathring{R}^B(\eta),\eta \rangle \Big) e^{-f}dV_g.
\end{align*}
We use (\ref{MBC}) to compute
\begin{align}
    \nonumber|\overline{\nabla}\eta|^2&=2(\overline{\nabla}_\alpha\eta)_{\beta\gamma}(\overline{\nabla}_{\overline{\alpha}}\eta)_{\overline{\beta}\overline{\gamma}}+2(\overline{\nabla}_\alpha\eta)_{\beta\overline{\gamma}}(\overline{\nabla}_{\overline{\alpha}}\eta)_{\overline{\beta}\gamma}+2(\overline{\nabla}_\alpha\eta)_{\overline{\beta}\gamma}(\overline{\nabla}_{\overline{\alpha}}\eta)_{\beta\overline{\gamma}}+2(\overline{\nabla}_\alpha\eta)_{\overline{\beta}\overline{\gamma}}(\overline{\nabla}_{\overline{\alpha}}\eta)_{\beta\gamma}
    \nonumber\\&=2\big((\nabla^B_\alpha\eta)_{\beta\gamma}+H_{\alpha\beta\overline{\tau}}\eta_{\tau\gamma}\big)\big((\nabla^B_{\overline{\alpha}}\eta)_{\overline{\beta}\overline{\gamma}}+H_{\overline{\alpha}\overline{\beta}\delta}\eta_{\overline{\delta}\overline{\gamma}} \big)+2(H_{\alpha\overline{\beta}\overline{\tau}}\eta_{\tau\gamma})(H_{\overline{\alpha}\beta\delta}\eta_{\overline{\delta}\overline{\gamma}})
    \nonumber\\&\kern2em+2\big( (\nabla^B_\alpha\eta)_{\overline{\beta}\overline{\gamma}}+H_{\alpha\overline{\beta}\tau}\eta_{\overline{\tau}\overline{\gamma}}\big)\big((\nabla^B_{\overline{\alpha}}\eta)_{\beta\gamma}+H_{\overline{\alpha}\beta\overline{\delta}}\eta_{\delta\gamma}\big)
    \nonumber\\&=2\big((\nabla^B_\alpha\eta)_{\beta\gamma}(\nabla^B_{\overline{\alpha}}\eta)_{\overline{\beta}\overline{\gamma}}+(\nabla^B_\alpha\eta)_{\overline{\beta}\overline{\gamma}}(\nabla^B_{\overline{\alpha}}\eta)_{\beta\gamma} \big)+2(\nabla^B_\alpha\eta)_{\beta\gamma} H_{\overline{\alpha}\overline{\beta}\delta}\eta_{\overline{\delta}\overline{\gamma}} +2(\nabla^B_{\overline{\alpha}}\eta)_{\overline{\beta}\overline{\gamma}}H_{\alpha\beta\overline{\tau}}\eta_{\tau\gamma}
    \nonumber\\&\kern2em +2(\nabla^B_\alpha\eta)_{\overline{\beta}\overline{\gamma}}H_{\overline{\alpha}\beta\overline{\delta}}\eta_{\delta\gamma}+2(\nabla^B_{\overline{\alpha}}\eta)_{\beta\gamma} H_{\alpha\overline{\beta}\tau}\eta_{\overline{\tau}\overline{\gamma}}+2H_2(\eta)+4H_1(\eta), \label{lll}
\end{align}
since $\eta$ is anti-Hermitian and $H\in \Lambda^{2,1}\oplus \Lambda^{1,2}$. Next, we derive 
\begin{align*}
     \int_M  &(\nabla^B_{\overline{\alpha}}\eta)_{\beta\gamma}(\nabla^B_{\alpha}\eta)_{\overline{\beta}\overline{\gamma}}e^{-f}dV_g
    \\&=\int_M-(\nabla^B_\alpha \nabla^B_{\overline{\alpha}}\eta)_{\beta\gamma}\eta_{\overline{\beta}\overline{\gamma}}e^{-f}dV_g+\int_M(\nabla^B_{\overline{\alpha}}\eta)_{\beta\gamma}\eta_{\overline{\beta}\overline{\gamma}}\nabla_{\alpha}fe^{-f}dV_g
    \\&=\int_M -\Big((\nabla^B_{\overline{\alpha}}\nabla^B_\alpha \eta)_{\beta\gamma}-R^B_{\alpha\overline{\alpha}\beta\overline{\tau}}\eta_{\tau\gamma}-R^B_{\alpha\overline{\alpha}\gamma\overline{\tau}}\eta_{\beta\tau}-H_{\alpha\overline{\alpha}\tau}(\nabla^B_{\overline{\tau}}\eta)_{\beta\gamma}- H_{\alpha\overline{\alpha}\overline{\tau}}(\nabla^B_{\tau}\eta)_{\beta\gamma}\Big)\eta_{\overline{\beta}\overline{\gamma}} e^{-f}dV_g
    \\&\kern2em +\int_M(\nabla^B_{\overline{\alpha}}\eta)_{\beta\gamma}\eta_{\overline{\beta}\overline{\gamma}}\nabla_{\alpha}fe^{-f}dV_g.
\end{align*}
Using (\ref{28}),
\begin{align*}
    \int_M  &(\nabla^B_{\overline{\alpha}}\eta)_{\beta\gamma}(\nabla^B_{\alpha}\eta)_{\overline{\beta}\overline{\gamma}}e^{-f}dV_g
    \\&=\int_M \Big( (\nabla^B_{\alpha}\eta)_{\beta\gamma}(\nabla^B_{\overline{\alpha}}\eta)_{\overline{\beta}\overline{\gamma}}+2R^B_{\alpha\overline{\alpha}\beta\overline{\tau}}\eta_{\tau\gamma}\eta_{\overline{\beta}\overline{\gamma}}-\theta_\tau (\nabla^B_{\overline{\tau}}\eta)_{\beta\gamma}\eta_{\overline{\beta}\overline{\gamma}}+\theta_{\overline{\tau}} (\nabla^B_{\tau}\eta)_{\beta\gamma}\eta_{\overline{\beta}\overline{\gamma}}\Big)e^{-f}dV_g
    \\&\kern2em +\int_M(\nabla^B_{\overline{\alpha}}\eta)_{\beta\gamma}\eta_{\overline{\beta}\overline{\gamma}}\nabla_{\alpha}fe^{-f}dV_g-\int_M(\nabla^B_{\alpha}\eta)_{\beta\gamma}\eta_{\overline{\beta}\overline{\gamma}}\nabla_{\overline{\alpha}}f e^{-f}dV_g
    \\&=\int_M \Big( (\nabla^B_{\alpha}\eta)_{\beta\gamma}(\nabla^B_{\overline{\alpha}}\eta)_{\overline{\beta}\overline{\gamma}}+2R^B_{\alpha\overline{\alpha}\beta\overline{\tau}}\eta_{\tau\gamma}\eta_{\overline{\beta}\overline{\gamma}}+\theta_\tau (\nabla^B_{\overline{\tau}}\eta)_{\overline{\beta}\overline{\gamma}}\eta_{\beta\gamma}+\theta_{\overline{\tau}} (\nabla^B_{\tau}\eta)_{\beta\gamma}\eta_{\overline{\beta}\overline{\gamma}}\Big)e^{-f}dV_g
    \\&\kern2em -\int_M(\nabla^B_{\overline{\alpha}}\eta)_{\overline{\beta}\overline{\gamma}} \eta_{\beta\gamma}\nabla_{\alpha}fe^{-f}dV_g-\int_M(\nabla^B_{\alpha}\eta)_{\beta\gamma}\eta_{\overline{\beta}\overline{\gamma}}\nabla_{\overline{\alpha}}f e^{-f}dV_g
    \\&\kern2em +\int_M |\eta|^2(-\nabla^B_{\overline{\alpha}}\nabla^B_\alpha f+(\nabla^B_{\overline{\tau}}\theta)_\tau+\frac{1}{2}|\nabla f|^2-\theta_\tau \nabla_{\overline{\tau}}f)e^{-f}dV_g.
\end{align*}
Using (\ref{k2}) again,
\begin{align*}
   \int_M(\nabla^B_{\alpha}\eta)_{\beta\gamma}\eta_{\overline{\beta}\overline{\gamma}}\nabla_{\overline{\alpha}}f e^{-f}dV_g&=\int_M \big( (\nabla^B_{\beta}\eta)_{\alpha\gamma}-H_{\alpha\beta\overline{\tau}}\eta_{\tau\gamma}+H_{\beta\gamma\overline{\tau}}\eta_{\tau\alpha}+H_{\gamma\alpha\overline{\tau}}\eta_{\tau\beta} \big)\eta_{\overline{\beta}\overline{\gamma}}\nabla_{\overline{\alpha}}f e^{-f}dV_g
   \\&=\int_M \big( -\nabla^B_\beta\nabla^B_{\overline{\alpha}}f\eta_{\alpha\gamma}-2H_{\alpha\beta\overline{\tau}}\eta_{\tau\gamma}\nabla_{\overline{\alpha}}f \big)\eta_{\overline{\beta}\overline{\gamma}} e^{-f}dV_g,
\end{align*}
\begin{align*}
    \int_M \theta_{\overline{\tau}} (\nabla^B_{\tau}\eta)_{\beta\gamma}\eta_{\overline{\beta}\overline{\gamma}} e^{-f}dV_g&=\int_M \theta_{\overline{\tau}}\big( (\nabla^B_\beta\eta)_{\tau\gamma}-H_{\tau\beta\overline{\delta}}\eta_{\delta\gamma}+H_{\beta\gamma\overline{\delta}}\eta_{\delta\tau}+H_{\gamma\tau\overline{\delta}}\eta_{\delta\beta}  \big)\eta_{\overline{\beta}\overline{\gamma}} e^{-f}dV_g
    \\&=\int_M \Big( -(\nabla^B_{\beta}\theta)_{\overline{\tau}}\eta_{\tau\gamma}\eta_{\overline{\beta}\overline{\gamma}}-2\theta_{\overline{\tau}}H_{\tau\beta\overline{\delta}}\eta_{\delta\gamma}\eta_{\overline{\beta}\overline{\gamma}} \Big) e^{-f}dV_g.
\end{align*}
Then,
\begin{align*}
    \int_M&  (\nabla^B_{\overline{\alpha}}\eta)_{\beta\gamma}(\nabla^B_{\alpha}\eta)_{\overline{\beta}\overline{\gamma}}e^{-f}dV_g
    \\&=\int_M \Big( (\nabla^B_{\alpha}\eta)_{\beta\gamma}(\nabla^B_{\overline{\alpha}}\eta)_{\overline{\beta}\overline{\gamma}}+2R^B_{\alpha\overline{\alpha}\beta\overline{\tau}}\eta_{\tau\gamma}\eta_{\overline{\beta}\overline{\gamma}}-2\theta_{\overline{\tau}}H_{\tau\beta\overline{\delta}}\eta_{\delta\gamma}\eta_{\overline{\beta}\overline{\gamma}}+2\theta_{\tau}H_{\overline{\tau}\beta\overline{\delta}}\eta_{\delta\gamma}\eta_{\overline{\beta}\overline{\gamma}}\Big)e^{-f}dV_g-\int_M (\mathcal{L}_{\theta^\sharp }g)_{\beta\overline{\tau}}\eta_{\overline{\beta}\overline{\gamma}}\eta_{\tau\gamma}e^{-f}dV_g
    \\&\kern2em +\int_M 2\nabla^B_\beta\nabla^B_{\overline{\alpha}}f\eta_{\alpha\gamma}\eta_{\overline{\beta}\overline{\gamma}}e^{-f}dV_g+\int_M 2\Big(\nabla_{\overline{\alpha}}f H_{\alpha\beta\overline{\tau}}\eta_{\overline{\beta}\overline{\gamma}}\eta_{\tau\gamma}-\nabla_{\alpha}f H_{\overline{\alpha}\beta\overline{\tau}}\eta_{\overline{\beta}\overline{\gamma}}\eta_{\tau\gamma} \Big)e^{-f}dV_g
    \\&\kern2em +\int_M \frac{1}{2}|\eta|^2\big(-d^*\theta-\triangle f-\langle \nabla f, \theta^\sharp-\nabla f \rangle\big)e^{-f}dV_g,
\end{align*}
where we use the fact that 
\begin{align*}
    \int_M \big( -\nabla^B_{\overline{\alpha}}\nabla^B_\alpha f+(\nabla^B_{\overline{\tau}}\theta)_\tau\big)|\eta|^2 e^{-f}dV_g=\int_M \frac{1}{2}(-d^*\theta-\triangle f)|\eta|^2 e^{-f}dV_g.
\end{align*}
Note that the integration is real so 
\begin{align*}
    \int_M (\iota_{\nabla f}H)_{\beta\overline{\tau}}\eta_{\overline{\beta}\overline{\gamma}}\eta_{\tau\gamma}e^{-f}dV_g=0.
\end{align*}
By (\ref{pp2}) and (\ref{pp3}), we have
\begin{align*}
    \int_M&  (\nabla^B_{\overline{\alpha}}\eta)_{\beta\gamma}(\nabla^B_{\alpha}\eta)_{\overline{\beta}\overline{\gamma}}e^{-f}dV_g
    \\&=\int_M \Big( (\nabla^B_{\alpha}\eta)_{\beta\gamma}(\nabla^B_{\overline{\alpha}}\eta)_{\overline{\beta}\overline{\gamma}}\Big)e^{-f}dV_g+\int_M \big( 4H_1(\eta)-2H_2(\eta) \big) e^{-f}dV_g
    \\&\kern2em -2\int_M (\mathcal{L}_{\theta^\sharp }g)_{\beta\overline{\tau}}\eta_{\overline{\beta}\overline{\gamma}}\eta_{\tau\gamma}e^{-f}dV_g+\int_M 2\Big(\nabla_{\overline{\alpha}}f H_{\alpha\beta\overline{\tau}}\eta_{\overline{\beta}\overline{\gamma}}\eta_{\tau\gamma}-\nabla_{\alpha}f H_{\overline{\alpha}\beta\overline{\tau}}\eta_{\overline{\beta}\overline{\gamma}}\eta_{\tau\gamma} \Big)e^{-f}dV_g
    \\&\kern2em +\int_M \frac{1}{2}|\eta|^2\big(-d^*\theta-\triangle f-\langle \nabla f, \theta^\sharp-\nabla f \rangle\big)e^{-f}dV_g.
\end{align*}
In summary, 
\begin{align*}
    \int_M |\overline{\nabla}\eta|^2 e^{-f}dV_g&=\int_M 2\big((\nabla^B_\alpha\eta)_{\beta\gamma}(\nabla^B_{\overline{\alpha}}\eta)_{\overline{\beta}\overline{\gamma}}+(\nabla^B_\alpha\eta)_{\overline{\beta}\overline{\gamma}}(\nabla^B_{\overline{\alpha}}\eta)_{\beta\gamma} \big) e^{-f}dV_g
    \\&\kern2em +\int_M 2(\nabla^B_\alpha\eta)_{\beta\gamma} H_{\overline{\alpha}\overline{\beta}\delta}\eta_{\overline{\delta}\overline{\gamma}}e^{-f}dV_g +\int_M 2(\nabla^B_{\overline{\alpha}}\eta)_{\overline{\beta}\overline{\gamma}}H_{\alpha\beta\overline{\tau}}\eta_{\tau\gamma} e^{-f}dV_g
    \nonumber\\&\kern2em +\int_M 2(\nabla^B_\alpha\eta)_{\overline{\beta}\overline{\gamma}}H_{\overline{\alpha}\beta\overline{\delta}}\eta_{\delta\gamma} e^{-f}dV_g+\int_M 2(\nabla^B_{\overline{\alpha}}\eta)_{\beta\gamma} H_{\alpha\overline{\beta}\tau}\eta_{\overline{\tau}\overline{\gamma}}e^{-f}dV_g+ \int_M \big(2H_2(\eta)+4H_1(\eta)\big) e^{-f}dV_g
    \\&=\int_M 4(\nabla^B_\alpha\eta)_{\beta\gamma}(\nabla^B_{\overline{\alpha}}\eta)_{\overline{\beta}\overline{\gamma}} e^{-f}dV_g-2\int_M (\mathcal{L}_{\theta^\sharp }g)_{\beta\overline{\tau}}\eta_{\overline{\beta}\overline{\gamma}}\eta_{\tau\gamma}e^{-f}dV_g+\int_M \big( 8H_1(\eta)-6H_2(\eta) \big) e^{-f}dV_g
    \\&\kern2em +2\int_M \Big(\nabla_{\overline{\alpha}}f H_{\alpha\beta\overline{\tau}}\eta_{\overline{\beta}\overline{\gamma}}\eta_{\tau\gamma}-\nabla_{\alpha}f H_{\overline{\alpha}\beta\overline{\tau}}\eta_{\overline{\beta}\overline{\gamma}}\eta_{\tau\gamma} \Big)e^{-f}dV_g 
    \\&\kern2em +\int_M |\eta|^2\big(-d^*\theta-\triangle f-\langle \nabla f, \theta^\sharp-\nabla f \rangle\big)e^{-f}dV_g
    \\&=2\int_M \langle R^B(\eta),\eta \rangle e^{-f}dV_g+\int_M \big( 4H_1(\eta)-2H_2(\eta) \big) e^{-f}dV_g+\int_M |\eta|^2\big(-d^*\theta-\triangle f-\langle \nabla f, \theta^\sharp-\nabla f \rangle\big)e^{-f}dV_g,
\end{align*}
where we used (\ref{ll2}). Finally, we derive
\begin{align*}
    \int_M\langle \eta, \overline{\mathcal{L}}_f(\eta)   \rangle   e^{-f}dV_g&=\int_M \big(-2H_1(\eta)+H_2(\eta)\big) e^{-f}dV_g
    \\&\kern2em +\int_M \frac{1}{2}|\eta|^2\big(\triangle f+d^*\theta+\langle \nabla f,\theta^\sharp-\nabla f \rangle\big)e^{-f}dV_g
\end{align*}
Recall that $C$ is defined by (\ref{C}), we have
\begin{align*}
    |C|^2&=3H_{km}^2(\eta\circ J)_{kl}(\eta\circ J)_{ml}+6H_{iab}H_{jcb}(\eta\circ J)_{ij}(\eta\circ J)_{ac}
    \\&=3(4H_1(\eta)+2H_2(\eta))+24H_3(\eta)
    \\&=12H_1(\eta)-6H_2(\eta)
\end{align*}
since $\eta$ is anti-Hermitian. It follows that 
\begin{align*}
    \int_M \big(-2H_1(\eta)+H_2(\eta)\big) e^{-f}dV_g=-\frac{1}{6}\|C\|_f^2.
\end{align*}
Taking the trace of (\ref{NT}), we note that the Bismut scalar curvature $S_B$ is given by 
\begin{align*}
    S_B=-d^*\theta-\triangle f.
\end{align*}
Therefore, our final result follows.

\end{proof}

In summary, we conclude that 
\begin{theorem}
Let $(M,g,H,J,f)$ be a compact pluriclosed steady soliton and $\gamma\in\ker\overline{\divg}_f$. Then, 
\begin{align}
     \frac{d^2}{dt^2}\Big|_{t=0}\lambda(\gamma)&= -2\|d^*_f\xi\|_f^2-\frac{1}{6}\|d\xi-C\|_f^2 +\int_M |\eta|^2\big( -\frac{1}{2}S_B+\mathcal{L}_Vf \big)e^{-f}dV_g, \label{k5}
\end{align} 
where $\gamma=\xi\circ J+\eta$ is the decomposition given in (\ref{k3}), $C$ is defined in (\ref{C}), $\|\cdot\|_f$ denotes the norm of $f$-twisted $L^2$ inner product (\ref{6}),  $S_B=\tr_\omega \rho_B$ denotes the Bismut scalar curvature and the vector field $V=\frac{1}{2}(\theta^\sharp-\nabla f)$.

\end{theorem}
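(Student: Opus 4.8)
The plan is to assemble (\ref{k5}) from the three pieces already computed, using the orthogonal decomposition (\ref{k3}). Since $\gamma\in\ker\overline{\divg}_f$, the second variation formula reduces (as recorded at the start of Section 3.2) to
\begin{align*}
\frac{d^2}{dt^2}\Big|_{t=0}\lambda(\gamma)=\int_M\langle\gamma,\overline{\mathcal{L}}_f(\gamma)\rangle e^{-f}dV_g,
\end{align*}
the terms $\tfrac12\overline{\divg}_f^*\overline{\divg}_f\gamma$ and $\tfrac12(\nabla^+)^2\phi$ of \cite{KK} Theorem 1.1 dropping out on the kernel. First I would substitute $\gamma=\xi\circ J+\eta$ and expand the quadratic form into the two diagonal contributions $\langle\xi\circ J,\overline{\mathcal{L}}_f(\xi\circ J)\rangle$, $\langle\eta,\overline{\mathcal{L}}_f(\eta)\rangle$ and the two off-diagonal contributions $\langle\xi\circ J,\overline{\mathcal{L}}_f(\eta)\rangle$, $\langle\eta,\overline{\mathcal{L}}_f(\xi\circ J)\rangle$.

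The diagonal contributions are exactly the quantities already evaluated: the anti-symmetric Proposition supplies $\int_M\langle\xi\circ J,\overline{\mathcal{L}}_f(\xi\circ J)\rangle e^{-f}dV_g=-2\|d_f^*\xi\|_f^2-\tfrac16\|d\xi\|_f^2$, and the symmetric Theorem supplies $\int_M\langle\eta,\overline{\mathcal{L}}_f(\eta)\rangle e^{-f}dV_g=-\tfrac16\|C\|_f^2+\int_M|\eta|^2(-\tfrac12 S_B+\mathcal{L}_Vf)e^{-f}dV_g$. For the off-diagonal contributions I would invoke \Cref{L6}, which gives that both $\int_M\langle\overline{\mathcal{L}}_f(\xi\circ J),\eta\rangle e^{-f}dV_g$ and $\int_M\langle\overline{\mathcal{L}}_f(\eta),\xi\circ J\rangle e^{-f}dV_g$ equal $\tfrac16\int_M\langle d\xi,C\rangle e^{-f}dV_g$; together they contribute $\tfrac13\int_M\langle d\xi,C\rangle e^{-f}dV_g$. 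Note that I need not prove full self-adjointness of $\overline{\mathcal{L}}_f$: \Cref{L6} already evaluates each cross term separately.

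Adding the four pieces, the sector depending on $\xi$ and $C$ reads $-\tfrac16\|d\xi\|_f^2+\tfrac13\int_M\langle d\xi,C\rangle e^{-f}dV_g-\tfrac16\|C\|_f^2$, which is precisely $-\tfrac16\|d\xi-C\|_f^2$ by the completion of the square
\begin{align*}
-\tfrac16\|d\xi-C\|_f^2=-\tfrac16\|d\xi\|_f^2+\tfrac13\int_M\langle d\xi,C\rangle e^{-f}dV_g-\tfrac16\|C\|_f^2.
\end{align*}
Leaving the $\|d_f^*\xi\|_f^2$ term and the $|\eta|^2$ integral untouched then yields (\ref{k5}) verbatim.

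The argument is essentially bookkeeping: all analytic content is packaged in the anti-symmetric Proposition, the symmetric Theorem, and \Cref{L6}. The only point requiring care is the matching of constants, namely that the two cross terms, each equal to $\tfrac16\int_M\langle d\xi,C\rangle e^{-f}dV_g$, combine to the coefficient $\tfrac13$ needed to close the square $-\tfrac16\|d\xi-C\|_f^2$ with the correct sign. Once this is verified the identity is immediate, and specializing to $\rho_B=0$ together with \Cref{LV} (so $S_B=0$ and $\mathcal{L}_Vf=0$) recovers the non-positivity stated for the Bismut--Hermitian--Einstein case.
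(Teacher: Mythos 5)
Your proposal is correct and is essentially the paper's own argument: the paper proves the cross-term Lemma (\ref{C}), the anti-symmetric Proposition, and the symmetric Theorem, and then obtains (\ref{k5}) by exactly this assembly, with the two cross terms of $\tfrac16\int_M\langle d\xi,C\rangle e^{-f}dV_g$ each completing the square $-\tfrac16\|d\xi-C\|_f^2$. Your additional observations---that the $\overline{\divg}_f^*\overline{\divg}_f\gamma$ and $(\nabla^+)^2\phi$ terms vanish on $\ker\overline{\divg}_f$ (since $\phi$ then solves $\triangle_f\phi=0$ with zero mean), and that self-adjointness of $\overline{\mathcal{L}}_f$ is not needed because \Cref{L6} evaluates both cross terms---are consistent with how the paper sets this up in Sections 2.3 and 3.2.
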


\begin{corollary}
Let $(M,g,H,J)$ be a compact Bismut--Hermitian--Einstein manifold, then $(M,g,H,J)$ is linearly stable. Moreover, $\gamma=\xi\circ J+\eta$ lies in the kernel of second variation if and only if 
\begin{align*}
    d^*\xi=0,\quad d\xi=C.
\end{align*}
\end{corollary}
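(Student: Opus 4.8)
The plan is to read the conclusion directly off the second variation formula \eqref{k5}, after checking that the $\eta$-integrand collapses in the Bismut--Hermitian--Einstein case. Concretely, I would first dispose of the $\eta$-term: the weight $-\tfrac12 S_B + \mathcal{L}_V f$ appearing in \eqref{k5} splits into two pieces, each of which vanishes once $\rho_B = 0$. Indeed $S_B = \tr_\omega \rho_B = 0$ identically; and since $V = \tfrac12(\theta^\sharp - \nabla f)$ we have $\mathcal{L}_V f = \tfrac12 \mathcal{L}_{\theta^\sharp - \nabla f} f$, which is zero by \Cref{LV} (this is exactly the lemma that holds on a compact Bismut--Hermitian--Einstein manifold). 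Hence $\int_M |\eta|^2\big(-\tfrac12 S_B + \mathcal{L}_V f\big)e^{-f}dV_g = 0$ and \eqref{k5} reduces to
\[
\frac{d^2}{dt^2}\Big|_{t=0}\lambda(\gamma) = -2\|d_f^*\xi\|_f^2 - \tfrac16\|d\xi - C\|_f^2 .
\]
Both summands are manifestly non-positive, so the second variation is $\le 0$ for every $\gamma \in \ker\overline{\divg}_f$. Since $\lambda$ is diffeomorphism invariant and we have already reduced to the slice $\ker\overline{\divg}_f$, on which linear stability is equivalent to the negative semidefiniteness of $\overline{\mathcal{L}}_f$, this is precisely linear stability of $(M,g,H,J)$.

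For the kernel I would use that the right-hand side above is a sum of two non-positive terms, so it vanishes if and only if each vanishes separately. Thus $\gamma = \xi\circ J + \eta$ lies in the null space of the second variation exactly when $\|d_f^*\xi\|_f = 0$ and $\|d\xi - C\|_f = 0$, equivalently $d_f^*\xi = 0$ and $d\xi = C$, where $C = C(\eta)$ is the three-form \eqref{C} built from $\eta$; the converse implication is immediate from the same identity. The only inputs genuinely specific to this corollary are the two vanishing facts $S_B = 0$ and $\mathcal{L}_V f = 0$ (both furnished by $\rho_B = 0$ together with \Cref{LV}), plus the standard observation that for a negative semidefinite symmetric Hessian the null space is genuinely cut out by the vanishing of the associated quadratic form rather than being larger—this is what licenses passing from ``$\tfrac{d^2}{dt^2}\lambda(\gamma)=0$'' to the two pointwise equations. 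There is thus no substantial remaining obstacle: the entire analytic difficulty was already absorbed into the proof of \eqref{k5}, and the corollary is essentially a sign inspection of that formula specialized to $\rho_B = 0$.
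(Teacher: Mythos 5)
Your proposal is correct and follows essentially the same route as the paper's own proof: invoke \Cref{LV} to kill $\mathcal{L}_V f$, note $S_B=\tr_\omega\rho_B=0$, so that (\ref{k5}) collapses to $-2\|d_f^*\xi\|_f^2-\tfrac16\|d\xi-C\|_f^2\leq 0$, with the kernel characterized by the separate vanishing of the two non-positive terms. If anything, you are slightly more careful than the paper, which silently drops the $f$-subscripts and does not spell out why vanishing of the quadratic form pins down the kernel of the operator.
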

\begin{proof}
By \Cref{LV}, we notice that $\mathcal{L}_Vf=0$ when $g$ is a Bismut--Hermitian--Einstein metric. In this case, 
\begin{align*}
    \frac{d^2}{dt^2}\Big|_{t=0}\lambda(\gamma)&= -2\|d^*\xi\|^2-\frac{1}{6}\|d\xi-C\|^2 
\end{align*}
for any general variation $\gamma\in\ker\overline{\divg}$.
\end{proof}

\subsection{Infinitesimal deformation}

In the last part, we study the kernel of the second variation on Bismut-flat manifolds. By \cite{KK} Proposition 5.1, we define
\begin{defn}\label{IS}
Let $(M,g,H,f)$ be a steady gradient generalized Ricci soliton. A 2-tensor $\gamma\in \otimes^2 T^*M$ is called an \emph{(essential) infinitesimal generalized solitonic deformation} of $(M,g,H,f)$, if the operator $\overline{\mathcal{L}_f}(\gamma)=\frac{1}{2}\overline{\triangle_f}\gamma+R^B(\gamma)=0$ and $\gamma\in\ker\overline{\divg}$. In particular, in the Bismut-flar case, a 2-tensor $\gamma\in\ker\overline{\divg}$ is an infinitesimal generalized solitonic deformation if and only if  
\begin{align*}
    \overline{\nabla}\gamma=0.
\end{align*}
\end{defn}

From the definition (\ref{MBC}) and (\ref{20}), we see that
\begin{align*}
     (\overline{\nabla}_X\gamma)(Y,Z)=(\nabla^B_X\gamma)(Y,Z)+\gamma(H(X,Y),Z) \text{ 
 for all vector fields $X,Y,Z$.}
\end{align*}
Therefore, $\overline{\nabla}(\gamma\circ J)=0$ if $\overline{\nabla}\gamma=0$. Here, $\gamma\circ J=-\xi+\eta\circ J$. Apply Proposition 5.1, Lemma 5.5 and Lemma 5.6 in \cite{KK}, we have the following proposition.
\begin{proposition}
 Let $(M,g,H,J)$ be a compact Bismut-flat, pluriclosed manifold. Let $\gamma=\xi\circ J+\eta$ be an essential infinitesumal generalized solitonic deformation then
 \begin{align}
            \nabla_m(\eta\circ J)_{ij}=-\frac{1}{2}(H_{mik}\xi_{jk}+H_{mjk}\xi_{ik}),\quad\nabla_m\xi_{ij}=-\frac{1}{2}H_{mjk}(\eta\circ J)_{ik}+\frac{1}{2}H_{mik}(\eta\circ J)_{jk} \label{long}
        \end{align}
and
\begin{align}
      \big(\mathring{R}(\eta\circ J),\eta\circ J\big)_{L^2}=\big(\mathring{R}(\xi),\xi\big)_{L^2}=0,\quad \|\nabla (\eta\circ J)\|_{L^2}= \|\nabla \xi\|_{L^2},\quad \int_M R_{ij}(\eta\circ J)_{jk}(\eta\circ J)_{ik} dV_g= \int_M R_{ij}\xi_{jk}\xi_{ik} 
 dV_g.  \label{llong}
\end{align}
Besides,
\begin{align*}
    H_{mik}\xi_{ik}=0.
\end{align*}
\end{proposition}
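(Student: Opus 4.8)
The plan is to read off everything from the characterization in \Cref{IS}: in the Bismut-flat setting an essential infinitesimal generalized solitonic deformation is exactly a $\gamma\in\ker\overline{\divg}$ with $\overline{\nabla}\gamma=0$. First I would record the pointwise content of $\overline{\nabla}\gamma=0$. Since $\nabla^B J=0$, composing with $J$ on the second slot commutes with the mixed connection, so $\overline{\nabla}(\gamma\circ J)=0$ as well; this is already noted in the text via the identity $(\overline{\nabla}_X\gamma)(Y,Z)=(\nabla^B_X\gamma)(Y,Z)+\gamma(H(X,Y),Z)$. Rewriting $\overline{\nabla}(\gamma\circ J)=0$ in terms of the Levi-Civita connection through the definition (\ref{MBC}) gives
\begin{align*}
\nabla_m(\gamma\circ J)_{ij}=-\tfrac12 H_{mik}(\gamma\circ J)_{kj}+\tfrac12 H_{mjk}(\gamma\circ J)_{ik}.
\end{align*}

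Next I would substitute the decomposition $\gamma\circ J=-\xi+\eta\circ J$, where $\xi$ is antisymmetric and $\eta\circ J$ is symmetric (because $\eta$ is symmetric and anti-Hermitian, as in \Cref{L6}). Taking the symmetric part in $(i,j)$ of the displayed identity isolates $\nabla_m(\eta\circ J)_{ij}$, while the antisymmetric part isolates $\nabla_m\xi_{ij}$. Because $H$ is totally antisymmetric, in each case the terms in which $H$ contracts an index of the same symmetry type as the target cancel, leaving precisely the coupled first-order system (\ref{long}). This is the step that upgrades the single parallel condition into a pair of equations that interchange $\xi$ and $\eta\circ J$.

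For the integral identities (\ref{llong}) I would invoke the Bochner/Weitzenböck results of \cite{KK}, namely Proposition 5.1 together with Lemmas 5.5 and 5.6. Bismut-flatness (\ref{R}) makes $\nabla H=0$ and expresses $Rm$ algebraically through $H$, so $\mathring{R}$ becomes a quadratic expression in $H$; integrating the Weitzenböck formula for a $\overline{\nabla}$-parallel tensor forces $(\mathring{R}(\xi),\xi)_{L^2}=(\mathring{R}(\eta\circ J),\eta\circ J)_{L^2}=0$, while the manifest $\xi\leftrightarrow\eta\circ J$ symmetry of (\ref{long}), combined with integration by parts and $\nabla H=0$, yields the equality of gradient norms and of the Ricci integrals. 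I would cite these lemmas rather than reproduce the $H$-algebra. Finally, for $H_{mik}\xi_{ik}=0$ I would first observe that $f$ is constant: Bismut-flatness gives $\Rc^B=0$, hence $(\nabla^B)^2 f=0$ by the soliton equation $\Rc^B=-(\nabla^B)^2 f$, and tracing gives $\Delta f=0$, so $f$ is constant on the compact $M$. Then the $f$-twisted divergence reduces to the ordinary one, and by \Cref{L6} we have $\nabla_i(\eta\circ J)_{ij}=0$. Contracting the first equation of (\ref{long}) over $m=i$ and using $H_{iik}=0$ yields $0=\nabla_i(\eta\circ J)_{ij}=-\tfrac12 H_{ijk}\xi_{ik}$, and antisymmetry of $H$ in its first two slots converts $H_{ijk}\xi_{ik}=0$ into the asserted $H_{mik}\xi_{ik}=0$. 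The main obstacle is the block (\ref{llong}): it genuinely relies on the imported Bochner identities and the rigidity of the Bismut-flat curvature; the remaining parts are a symmetry decomposition and a single contraction.
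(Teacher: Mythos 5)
Your proposal is correct and follows essentially the same route as the paper: reduce to the $\overline{\nabla}$-parallelism of $\gamma\circ J=-\xi+\eta\circ J$, split that identity into its symmetric part ($\eta\circ J$) and antisymmetric part ($-\xi$) to get (\ref{long}), and invoke the cited results of \cite{KK} (Proposition 5.1, Lemmas 5.5 and 5.6) for the integral identities (\ref{llong}). The only difference is one of detail, not of method: you work out by hand the expansion of $\overline{\nabla}(\gamma\circ J)=0$ in terms of the Levi-Civita connection, the constancy of $f$ in the Bismut-flat case, and the trace giving $H_{mik}\xi_{ik}=0$, all of which the paper delegates to the citations, and your computations check out.
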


In the following, let us consider a special case. Consider an essential infinitesimal generalized solitonic deformation $\gamma=\xi\circ J$ on a Bismut-flat, pluriclosed manifold $(M,g,J)$, i.e., the complex structure is fixed, (\ref{long}) and (\ref{llong}) imply that $\nabla\xi=0$ and
\begin{align*}
    0=H_{mik}\xi_{jk}+H_{mjk}\xi_{ik}, \quad \int_M R_{ij}\xi_{jk}\xi_{ik} 
 dV_g=0.
\end{align*}
Note that any Bismut-flat metric has nonnegative Ricci curvature,  we get the following corollary.

\begin{corollary}
 Let $(M,g,H,J)$ be a compact Bismut-flat, pluriclosed manifold. Suppose that the Ricci curvature is $(2n-1)$-positive, then there is no essential infinitesimal generalized solitonic deformation on directions of fixing the complex structure.    
\end{corollary}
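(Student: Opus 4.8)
The plan is to prove that the only such deformation is $\gamma=0$, by promoting the global integral identity to a pointwise one and then running a short linear-algebra argument on the Ricci and $\xi$ tensors. Since the complex structure is fixed, the symmetric part vanishes, $\eta=0$, so $\gamma=\xi\circ J$. The specialization of (\ref{long}) and (\ref{llong}) to $\eta=0$ recorded above supplies $\nabla\xi=0$ together with the two facts that drive the argument: the pointwise nonnegativity of $\Rc$ (valid for every Bismut-flat metric) and the vanishing integral $\int_M R_{ij}\xi_{jk}\xi_{ik}\,dV_g=0$.

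First I would upgrade this integral identity to a pointwise statement. Set $A_{ij}:=\xi_{ik}\xi_{jk}$; then $A$ is symmetric and positive semidefinite at each point, since $A_{ij}v_iv_j=|\iota_v\xi|^2\geq 0$ (equivalently $A=\xi\xi^{\mathsf{T}}=-\xi^2$ for the skew tensor $\xi$). As $\Rc\geq 0$ is also positive semidefinite, the integrand $R_{ij}\xi_{jk}\xi_{ik}=\langle\Rc,A\rangle=\tr(\Rc\,A)$ is the trace of a product of two positive semidefinite symmetric operators, hence nonnegative at every point. A continuous nonnegative integrand with vanishing integral must vanish identically, so $\tr(\Rc\,A)=0$ everywhere on $M$.

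Next I would extract the algebraic consequence pointwise. For two positive semidefinite symmetric operators, $\tr(\Rc\,A)=0$ forces $\Rc\,A=0$ (write $\tr(\Rc\,A)=\tr(\Rc^{1/2}A\,\Rc^{1/2})$ and note the argument is a nonnegative operator of zero trace), so $\mathrm{Image}(A)\subseteq\ker(\Rc)$ at each point. The hypothesis that $\Rc$ is $(2n-1)$-positive means, in light of $\Rc\geq 0$, that $\Rc$ has at most one zero eigenvalue at every point, i.e.\ $\dim\ker(\Rc)\leq 1$. Therefore $\mathrm{rank}(A)\leq 1$ pointwise. But $\mathrm{rank}(A)=\mathrm{rank}(\xi)$, and the rank of a real alternating form is always even, so $\mathrm{rank}(\xi)=0$; that is, $\xi=0$ at every point. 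Hence $\gamma=\xi\circ J=0$, and there is no nontrivial essential infinitesimal generalized solitonic deformation in the directions fixing the complex structure.

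The individual steps are elementary, so the crux is the interplay of the two positivity hypotheses: I expect the one point requiring care to be the passage from $\tr(\Rc\,A)=0$ to $\Rc\,A=0$ together with the parity constraint on $\mathrm{rank}(\xi)$. It is precisely the evenness of the rank of an alternating form that converts the curvature bound $\dim\ker(\Rc)\leq 1$ into $\xi=0$, rather than the weaker $\mathrm{rank}(\xi)\leq 1$. I would also make sure $(2n-1)$-positivity is invoked as the pointwise statement about the Ricci eigenvalues, exactly as it is phrased in the earlier proposition.
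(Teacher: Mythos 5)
Your proposal is correct and takes essentially the same route as the paper: the paper reduces the corollary to the three facts you invoke ($\Rc\ge 0$ pointwise for Bismut-flat metrics, the vanishing of $\int_M R_{ij}\xi_{jk}\xi_{ik}\,dV_g$, and the $(2n-1)$-positivity hypothesis), leaving the final linear algebra implicit. Your pointwise upgrade via the PSD-trace argument, the inclusion $\mathrm{Image}(\xi\xi^{\mathsf{T}})\subseteq\ker(\Rc)$, and the evenness of the rank of the skew form $\xi$ supply exactly the intended conclusion $\xi=0$.
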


\bibliographystyle{plain}
\bibliography{Reference}
\nocite{*}

\end{document}